\tikzstyle{every picture}=[
\newtheorem{definition}{Definition}
\newtheorem{lemma}{Lemma}
\newtheorem{example}{Example}
\newtheorem{proposition}{Proposition}
\newtheorem{theorem}{Theorem}
\newtheorem{claim}{Claim}
\newtheorem{corollary}{Corollary}
\newtheorem{conj}{Conjecture}
\def\N{{\mathbb N}}
\def\bigboxplus{{\boxed{+}}}
\def\pro#1#2#3#4#5#6#7#8{
\ran=#5
 \pgfmathsetcount{\ran}{\ran+1}
\draw[fill=black] (#1,#2) rectangle (#1+#3,#2+#4);
\node[] (bla) at (#1+#3/2,#2+#4/2) {\color{white} #7};
\ifthenelse{#5>0}{\foreach \x in {1,...,#5} {
 \node[] (uu#8\x) at (#1+\x*#3/\ran,#2+#4-0.11){};
 \node[] (u#8\x) at (#1+\x*#3/\ran,#2+#4*1.7-0.11){};
}}{}
\pgfmathsetcount{\ran}{#6+1}
\ifthenelse{#6>0}{\foreach \x in {1,...,#6} {
 \node[] (dd#8\x) at (#1+\x*#3/\ran,#2+0.11){};
 \node[] (d#8\x) at (#1+\x*#3/\ran,#2-#4*0.7+0.11){};
}}{}

}
\def\Apro#1#2#3#4#5#6#7#8#9{
\ran=#5
 \pgfmathsetcount{\ran}{\ran+1}
\draw[fill=#8] (#1,#2) rectangle (#1+#3,#2+#4);
\node[] (bla) at (#1+#3/2,#2+#4/2) {\color{black} #7};
\ifthenelse{#5>0}{\foreach \x in {1,...,#5} {
 \node[] (uu#9\x) at (#1+\x*#3/\ran,#2+#4-0.11){};
 \node[] (u#9\x) at (#1+\x*#3/\ran,#2+#4*1.7-0.11){};
}}{}
\pgfmathsetcount{\ran}{#6+1}
\ifthenelse{#6>0}{\foreach \x in {1,...,#6} {
 \node[] (dd#9\x) at (#1+\x*#3/\ran,#2+0.11){};
 \node[] (d#9\x) at (#1+\x*#3/\ran,#2-#4*0.7+0.11){};
}}{}

}
\def\socket#1#2#3#4#5{
\ran=#5
 \pgfmathsetcount{\ran}{\ran+1}
\ifthenelse{#5>0}{\foreach \x in {1,...,#5} {
 \draw[fill=black] (#1+\x*#3/\ran,#2+#4*1.7-0.3) circle [radius=0.03];
}}{}
 
}
\def\plug#1#2#3#4#5{
\ran=#5
 \pgfmathsetcount{\ran}{\ran+1}
\ifthenelse{#5>0}{\foreach \x in {1,...,#5} {
 \draw[fill=white,color=white] (#1+\x*#3/\ran,#2) circle [radius=0.03];
}}{}
 
}
\def\chip #1#2#3#4#5#6#7{
\ran=#5
 \pgfmathsetcount{\ran}{\ran+1}

\draw[fill=black] (#1,#2) rectangle (#1+#3,#2+#4);

\node[] (bla) at (#1+#3/2,#2+#4/2) {\color{white} #7};
\ifthenelse{#5>0}{\foreach \x in {1,...,#5} {
 \node[] (uu\x) at (#1+\x*#3/\ran,#2+#4-0.11){};
 \node[] (u\x) at (#1+\x*#3/\ran,#2+#4*1.7-0.11){};
 \draw (uu\x) -- (u\x);
}}{}

\socket {#1}{#2}{#3}{#4}{#5}
\plug {#1}{#2}{#3}{#4}{#6}
}
\def\tikdanseq#1{
\begin{array}{c}
\begin{tikzpicture}
#1
\end{tikzpicture}
\end{array}
}
\def\hmat#1#2#3{{\displaystyle\mathop{\mathfrak #1}^{#2}_{#3}}}
\def\TI{{\mathtt I}}
\def\TJ{{\mathtt J}}
\def\TK{{\mathtt K}}
\def\obj{\mathrm{Obj}}
\begin{document} 

\title{Multilinear representations of Free PROs}
\author{\'E.~Laugerotte, J.-G.~Luque, L.~Mignot, F.~Nicart}
\date{\today}

 \maketitle{}
 \begin{abstract}

We describe a structure of PRO on hypermatrices. This structure allows us to define multilinear representations of PROs and in particular of free Pros. As an example of applications, we investigate the relations of the representations of Pros with the theory of automata.
 \end{abstract}
 {\it Keywords}:  PROs, Operads, hypermatrices, categories, theory of automata.
\section{Introduction}
 The \emph{PRO} theory is a way to embed an abstract theory of 
 operators into the formalism of categories. Informally, a PRO is a set of 
 operators having several inputs and outputs and which can be composed 
 by branching ones to the others or by juxtaposing them. The name PRO 
 means \emph{PROduct category} and comes from the interpretation of the 
 juxtaposition as a tensor product.
 The first occurrence of the notion of PRO dates back from 
 the early works of Mac Lane  \cite{MacLane1,MacLane} and was used in a special case
 by Boardman and Vogt \cite{BV1,BV} to model homotopy. The PRO theory 
 has many connexions with several fields; sometimes 
 explicitly as in algebraic topology \cite{BV} or algebraic combinatorics 
 \cite{BG}. Sometimes the connexion was not explicitly identified but the 
 underlying algebraic structure involves naturally a PRO; we give a few examples in 
 the end of the paper (Section \ref{automata}, appendices \ref{QI} and \ref{TL}).
 
  As for many algebraic structures, the notion of 
 freeness is well defined for PROs. This means that there exist objects, 
 called free PROs, having the universal property. This property allows 
 to construct onto morphisms from a free PRO to any PRO having the same 
 set of generators. At the other end, we define  PROs
 on hypermatrices. The goal of the paper is to investigate the morphisms 
 from free PROs to  PROs on hypermatrices. These morphisms are called 
 multilinear representations.
 
 The paper is organized as follows. Section \ref{PRO} is devoted to the 
 general study of the notion of PRO. More precisely, we first recall 
 (Section \ref{what}) the original definition of PRO in category 
 theory. In Section \ref{CombiPro}, we recall the alternative 
 constructive definition. We investigate also several generalizations 
 and variations 
 like colored PRO (Section \ref{colorPro}) and ModPro (Section \ref{ModPro}). 
 This last notion allows us to consider some PROs as modules over a 
 semiring. The concept of subPros and quotient are also recalled (see 
 Section \ref{Quot}).\\
  Free Pros are defined and studied in Section \ref{Free}.
 In particular in  Section \ref{CombiPro}, we 
 investigate the case where all the generators have neither empty 
 input nor empty output. Such a free PRO is called \emph{Circuit 
 PROs}. This is a particularly important example 
 because the elements can be nicely represented by some kind of electronic 
 circuits. We discuss also on the difficulties to find a combinatorial 
 representation for general free PROs (Section \ref{Others}) and we 
 describe a colored PRO on generalized paths on hypergraphs in Section 
 \ref{Path}.\\
 The PRO structure on hypermatrices is defined in Section \ref{PROHyp} 
 and
  we investigate the properties of the composition and juxtaposition with respect to the Kronecker product (Section 
  \ref{Kronecker}) and the sum (Section \ref{Sum}).\\
  This allows us to define multilinear representations of PROs (Section 
  \ref{RepFreePros}). We investigate the behavior with respect to the 
  sum on hypermatrices  (Section \ref{RepQuasi}) and the interpretation 
  in terms of generalized paths (Section \ref{RepPath}).\\
  As an example of applications, in the last section, 
  we investigate the links with the 
  theory of automata  \ref{automata}.
  
\section{Pros and their generalizations\label{PRO}}
This section is devoted to the definitions and the properties of several structures related to the notion of Pro.
Pros are bigraded sets of objects (according to the number of inputs and the number of outputs). In Section \ref{what} we compare two definitions
of Pros which can be found in literature. The first one comes directly 
from the category theory and the second is more combinatorial.
 \subsection{What  PROs are\label{what}}

In modern algebra, \emph{PRO}s are defined in category theory as strict monoidal categories whose objects 
are the natural numbers (including zero), and whose tensor product is the addition. Before giving a
 more combinatorial way to define PROs, let us explain what this first definition means. First we recall that the aim of category 
 theory is to provide tools to describe in an abstract way classes of mathematical objects (sets, monoids, algebras, \emph{etc}.).
  A class is encoded by a graph whose vertices are the objects and whose arrows encode the morphisms.
   More precisely,  a \emph{category} $\mathcal C$ is constituted with three entities:
\begin{itemize}
\item The class $\obj(\mathcal C)$ of its \emph{objects},
\item The class $Hom(\mathcal C)$ of its \emph{arrows} (also called \emph{morphisms}). Each arrow has a source and a target
 which belong to $\obj(\mathcal C)$. The set of the arrows whose source is $a$ and whose target is $b$ is denoted
  by $Hom_C(a,b)$ (or $Hom(a,b)$ when there is no ambiguity).
\item  A binary operation $\circ$, called \emph{composition} such that for any $a,b,c\in \obj(\mathcal C)$,
 $\circ:Hom(b,c)\times Hom(a,b)\rightarrow Hom(a,c)$. When there is no ambiguity, we omit to write $\circ$. 
 The composition must satisfy two properties,
\begin{itemize}
\item {\it Associativity.} $f(gh)=(f g) h$
\item {\it Identity.} For any object $a\in \obj(\mathcal C)$, there exists a unique morphism $1_a\in Hom(a,a)$ called identity on $a$. 
These arrows satisfy $1_b f=f 1_a=f$ for any $f\in Hom(a,b)$. When there is no ambiguity the identity is simply denoted by $1$.
\end{itemize}

\end{itemize}
\emph{Functors} are morphisms between categories which encode maps preserving identity and compositions. 
Let $\mathcal C$ and $\mathcal D$ be two categories. A functor $F$ from $\mathcal C$ to $\mathcal D$ 
is a map that associates to each object $a\in\mathcal C$ an object $F(a)\in\mathcal  D$ 
and to each arrow $f\in Hom(a,b)$ an arrow $F(f)\in Hom(F(a),F(b))$ such that the image of the $1_a$ is $1_{F(a)}$ and 
$F(f\circ g)=F(f)\circ F(g)$.
If $F$ and $G$ are functors between two categories $\mathcal C$ and $\mathcal D$, a \emph{natural transformation} $\eta$ is a family
 of morphisms such that for each object $a\in\mathcal C$ there exists a morphism $\eta_a:F(a)\rightarrow G(a)$ in $\mathcal D$ called \emph{component}
  of $\eta$ at 	$a$ satisfying $\eta_b\circ F(f)=G(f)\circ \eta_a$ for every morphism $f:a\rightarrow b$. In the aim to simplify the notation, we use $\eta$ instead of $\eta_a$ when there is no ambiguity.

The \emph{product category} $\mathcal C\times\mathcal  D$ is the category whose objects are the pairs $(a,b)$ with $a\in\mathcal C$
 and $b\in\mathcal  D$ and whose arrows are the pairs of morphisms $(f,g)$ such that $f\in Hom(a,a')$ and $g\in Hom(b,b')$; the composition is the component-wise composition. More precisely, 
\begin{equation}\label{prodcat}
(f,g)\circ (f',g')=(f\circ f',g\circ g')\mbox{ and }1_{(a,b)}=(1_a,1_b).
\end{equation}
A \emph{monoidal category} is a category $\mathcal M$ equiped with a 
(bi)functor $\otimes: \mathcal M\times \mathcal M\rightarrow \mathcal M$, an 
object $I$ called the \emph{unit} object or the \emph{identity} object, three natural isomorphisms
\begin{enumerate}
\item the \emph{associator} $\alpha$ with components $\alpha_{a,b,c}:(a\otimes b)\otimes c\simeq a\otimes (b\otimes c)$;
\item the \emph{left unitor} $\lambda$ with components $\lambda_a:I\otimes a\simeq a$,
\item the \emph{right unitor} $\rho$ with components $\rho_a:a\otimes I\simeq a$
\end{enumerate}
satisfying $ (1\otimes \alpha)\alpha(\alpha\otimes 1)=\alpha\alpha:((a\otimes b)\otimes c)\otimes d)\rightarrow a\otimes(b\otimes (c\otimes d))$ and
$(1\otimes\lambda)\alpha=\rho\otimes 1:(a\otimes 1)\otimes b\rightarrow a\otimes b$ for any objects $a, b, c, d$ in $\mathcal M$.\\
A  monoidal category $(\mathcal M,\alpha,\lambda,\rho,I)$ is said \emph{strict} 
 if the natural isomorphisms $\alpha$, $\lambda$ and $\rho$ are identities. In other words 
 \[a\otimes(b\otimes c)=(a\otimes b)\otimes c\] for any $a,b,c\in \obj(\mathcal M)$,
   \[f\otimes (g\otimes h)=(f\otimes g)\otimes h\] for any $f,g,h\in Hom(\mathcal M)$, 
 $I\otimes a=a=a\otimes I$ for any $a\in \obj(\mathcal M)$, and $1\otimes f=f\otimes I=f$ for any $f\in Hom(\mathcal M)$ (\emph{i.e.} the left and right multiplication by $I$ are  the identity functor). Notice that, for any $f\in Hom(a,a')$ and $g\in Hom(b,b')$, $f\otimes g$ is an arrow whose source is $a\otimes b$ and target is $a'\otimes b'$. Furthermore from (\ref{prodcat}), we have
 an additional  identity : 
\begin{equation}\label{mass2}
(f\circ g)\otimes (f'\circ g')=(f\otimes f')\circ (g\otimes g')\end{equation}
 for any $f\in Hom(a,b), g\in Hom(b,c),  f'\in Hom(a',b')$, and $g'\in Hom(b',c')$
 

A \emph{PRO} $\mathcal P$ is a strict monoidal category whose object are  the natural numbers and the tensor product sends 
$(m, n)$ to $m+n$. Hence, if $f\in Hom(m,n)$ and $g\in Hom(m',n')$ then we have $f\otimes g\in Hom(m+m',n+n')$. 
Obviously, the unit object of the category is $0$.

\begin{example}
\rm
 Suppose that $Hom(m,n)=\{f_{m,n}\}$. In other words, the  morphisms are the edges of  
the complete graph whose vertices are the integers.
In addition, we define a composition $\circ$ satisfying $f_{m,n}\circ f_{n,p}=f_{m,p}$ 
 for any integers $n,m,p\in\N$.
Obviously, this defines a category. Now, if we set $f_{m,n}\otimes f_{m',n'}=f_{m+m',n+n'}$, then the category is endowed with a structure of PRO.
\end{example}

\begin{example}
\rm The category FinSet, whose objects are all finite sets and
 whose morphisms are all functions between them, is a PRO.  Each integer $n$ is identified with a unique set $\{0,\dots,n-1\}$ 
 and a morphism from $m$ to $n$ is a $m$-tuple $(\alpha_0,\dots,\alpha_{m-1})$ such that $0\leq\alpha_i\leq n-1$ for each $0\leq i\leq m-1$.
\end{example}
This definition being very formal, 
some properties, like formula (\ref{mass2}), are implicit. Furthermore we have 
 \begin{equation}\label{1_n}1_n=\overbrace{1_1\otimes\dots\otimes1_1}^{n\mbox{ \footnotesize times}}\end{equation} for $n>0$ as a consequence 
 of $1_{m}\otimes 1_{n}=1_{m+n}$ and formula (\ref{prodcat}), and
$
1_0
$ is the identity on $0$.
The last equality is a consequence of formula (\ref{mass2}).

  In the next section, we give an  alternative combinatorial definition and make a parallel with the first one. 
  Nevertheless, the algebraicity can be used to propose structures naturally derived from the notion of PRO. 
  In our paper, we will use two kinds of processes. 
  The first one consists in endowing each $Hom(a,b)$ with an additional algebraic structure and some compatibility conditions. 
  The second one consists in changing the objects in the strict monoidal category. For instance, we will investigate the notion of
   colored PROs which are strict monoidal categories whose objects are vectors of colours and whose tensor 
   product is the catenation of the vectors. 
\subsection{A combinatorial definition for PROs\label{CombiPro}}
In this section, we give an alternative  definition for PROs  used in the context of algebraic combinatorics (see \emph{e.g.} \cite{BG}).

A \emph{PRO}\ 
  is a bi-graded set 
$\mathcal P=\bigcup_{m,n\in \mathbb N}\mathcal P_{m,n}$ endowed with two binary 
operations $\leftrightarrow:\mathcal P_{m,n}\times \mathcal P_{m',n'}\rightarrow \mathcal P_{m+m',n+n'}$
 (\emph{horizontal composition}) and $\updownarrow:\mathcal P_{m,n}\times \mathcal P_{n,p}\rightarrow \mathcal P_{m,p}$
  (\emph{vertical composition}) which satisfy the following rules:
\begin{itemize}
\item\emph{Horizontal associativity.} For any $\mathfrak p,\mathfrak p',\mathfrak p''\in \mathcal P$, we have: 
\begin{equation}\label{hass}
\mathfrak p\leftrightarrow (\mathfrak p'\leftrightarrow \mathfrak p'')=(\mathfrak p\leftrightarrow \mathfrak p')\leftrightarrow \mathfrak p''.
\end{equation}
\item\emph{Vertical associativity.} For each $\mathfrak p\in \mathcal P_{m,n}$, $\mathfrak q\in \mathcal P_{n,p}$ 
and $\mathfrak r\in\mathcal P_{p,q}$ we have
\begin{equation}\label{vass}
\begin{array}{c}\mathfrak p\\\updownarrow\\ \left(\begin{array}{c}\mathfrak q\\\updownarrow \\\mathfrak r\end{array}\right)\end{array}=\begin{array}{c}\left(\begin{array}{c}\mathfrak p\\\updownarrow\\ \mathfrak q\end{array}\right)\\\updownarrow \\\mathfrak r.\end{array}
\end{equation}
\item \emph{Interchange law.} For each $\mathfrak p\in \mathcal P_{m,n}$, $\mathfrak q\in \mathcal P_{n,p}$,  
$\mathfrak p'\in \mathcal P_{m',n'}$ and $\mathfrak q'\in \mathcal P_{n',p'}$ we have
\begin{equation}\label{mass}
\left(\begin{array}{c}\mathfrak p\\\updownarrow \\\mathfrak q\end{array}\right)\leftrightarrow \left(\begin{array}{c}\mathfrak p'\\\updownarrow\\\mathfrak q'\end{array}\right)=\begin{array}{c}(\mathfrak p\leftrightarrow\mathfrak p')\\\updownarrow \\(\mathfrak q\leftrightarrow\mathfrak q').\end{array}
\end{equation} 
\item \emph{Graded vertical unit.} For each $n\in\mathbb N$ there exists a unique graded unit $Id_n$. More precisely,
 one has 
\begin{equation}\label{grneut} \begin{array}{c}Id_m\\\updownarrow\\\mathfrak p\end{array}=
\begin{array}{c}\mathfrak p\\\updownarrow\\ Id_n\end{array}=\mathfrak p\end{equation} 
for each $\mathfrak p\in \mathcal P_{m,n}$. Moreover we must have \begin{equation}\label{grneut3}Id_n=I
d_1^{\leftrightarrow n}=\overbrace{Id_{1}\leftrightarrow\cdots\leftrightarrow Id_{1}}^{n\mbox{ \footnotesize times}}\end{equation} for $n\geq 1$. 
\item \emph{Horizontal unit.} There exists an element  $\varepsilon$ such that 
for each $\mathfrak p\in\mathcal P$ we have \begin{equation}\label{hneut}\varepsilon\leftrightarrow\mathfrak p=\mathfrak p\leftrightarrow \varepsilon=\mathfrak p.\end{equation}
\end{itemize}

For simplicity, when there is no ambiguity, we  denote $Id_1=|$, $Id_n=\overbrace{|\cdots|}^n$ and $Id_0=\oslash$.
Observe that $(\mathcal P,\leftrightarrow)$ is a monoid whose unit is $\oslash$ and each $(\mathcal P_{n,n},
\updownarrow)$ is a monoid whose  unit is $\overbrace{|\cdots|}^n$.
Each element $p\in \mathcal P_{m,n}$ with $m,n>0$ will be called \emph{pluggable}.

Naturally,  \emph{morphisms}\footnote{In the algebraic definition, these morphisms are monoidal functors.} of PROs are defined as  
maps $\phi:\mathcal P\rightarrow \mathcal Q$ that carry the algebraic structures. More precisely, a morphism $\phi$ satisfies the following properties:
\begin{itemize}
\item \emph{Graded map.} $$\phi(\mathcal P_{m,n})\subset \mathcal Q_{m,n}.$$
\item \emph{Morphism of monoid.}
 $$\phi(\mathfrak p\leftrightarrow \mathfrak q)=\phi(\mathfrak p)\leftrightarrow \phi(\mathfrak q).$$
\item \emph{Compatibility with the vertical composition.} $$\phi\left(\begin{array}{c}\mathfrak p\\\updownarrow\\\mathfrak q\end{array}\right)=\begin{array}{c}\phi(\mathfrak p)\\\updownarrow\\\phi(\mathfrak q)\end{array}.$$
\item \emph{The image of a unit is a unit.} 
$$\phi\left(\overbrace{|\cdots|}^n\right)=\overbrace{|\cdots|}^n,$$ for each $n\in\mathbb N$.
\end{itemize}
Let us compare the two definitions:
\begin{center}
\begin{tabular}{|c|c|}
\hline
Combinatorial definition& Categorial definition\\
\hline 
$\mathcal P_{m,n}$&$Hom(m,n)$\\
Vertical composition& Composition of the morphisms\\
Horizontal composition& Tensor product\\
$Id_n$&Identity on $n$\\
$\varepsilon$&Identity on the unit object\\
Morphism& Monoidal functor\\\hline
\end{tabular}
\end{center}
Notice that the interchange law and the equality $Id_n=Id_1^{\leftrightarrow n}$ are now axioms whilst they are deduced from the strict monoidal structure in the first definition. Also we have
\begin{equation}Id_0=\begin{array}{c}Id_0\leftrightarrow\varepsilon\\
\updownarrow\\
\varepsilon\leftrightarrow Id_0
\end{array}
=
\begin{array}{c}Id_0\\
\updownarrow\\
\varepsilon
\end{array}
\leftrightarrow
\begin{array}{c}\varepsilon\\
\updownarrow\\
Id_0
\end{array}
=
\varepsilon\leftrightarrow\varepsilon=\varepsilon\end{equation}
Hence, the horizontal unit is unique: $\varepsilon=Id_0$ as for the algebraic definition. 
\subsection{Colored PROs\label{colorPro}}
The notion of colored PRO extends the notion of PRO to more general graded sets with finer graduation. Consider a set $\mathbf C$ of colors. A \emph{colored PRO} is a graded set $$\mathcal P=\bigcup_{n,m\geq 0}\bigcup_{I\subset \mathbf C^{n}\atop J\subset\mathbf C^{m}}\mathcal P_{I,J}$$ endowed with two laws $\leftrightarrow: \mathcal P_{I,J}\times \mathcal P_{I',J'}\longrightarrow\mathcal P_{II',JJ'}$, where $II'$ is the catenation of the lists $I$ and $I'$ and 
$\updownarrow: \mathcal P_{I,J}\times \mathcal P_{J,K}\longrightarrow\mathcal P_{I,K}$ satisfying 
the horizontal associativity, the vertical associativity,  and the interchange law and such that
\begin{itemize}
\item \emph{Graded vertical unit}: For each $n\geq 0$ and $I\in\mathbf C^n$ there exists a unique element $Id_I\subset\mathcal P_{I,I}$ verifying 
$$\begin{array}{c}Id_I\\\updownarrow\\\mathfrak p\end{array}=\mathfrak p$$ for each $\mathfrak p\in \mathcal P_{I,J}$ and $$\begin{array}{c}\mathfrak q\\\updownarrow\\ Id_I\end{array}=\mathfrak q$$ for each $\mathfrak q\in \mathcal P_{K,I}$. Moreover we must have $Id_I=Id_{c_1}\leftrightarrow \cdots\leftrightarrow Id_{c_n}$ for $I=[c_1\dots,c_n]$. 
\item \emph{Horizontal unit}: there exists an element  $\varepsilon\in \mathcal P_{[],[]}$ such that 
for each $\mathfrak p\in\mathcal P$ we have $$\varepsilon\leftrightarrow\mathfrak p=\mathfrak p\leftrightarrow \varepsilon=\mathfrak p.$$ 
\end{itemize}
As in the case of PRO we have
\begin{lemma}
The horizontal unit is unique: $\varepsilon=Id_{[]}$.
\end{lemma}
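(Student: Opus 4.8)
The plan is to imitate, almost verbatim, the computation displayed just above for ordinary PROs, reading the empty list $[]$ where the object $0$ appeared and $Id_{[]}$ where $Id_0$ appeared. Note first that both $\varepsilon$ and $Id_{[]}$ lie in $\mathcal P_{[],[]}$: indeed $\varepsilon\in\mathcal P_{[],[]}$ by definition of the horizontal unit, and $Id_{[]}\in\mathcal P_{[],[]}$ as the graded vertical unit attached to $I=[]$. Consequently every horizontal and vertical composition written below is legal, the grading constraints (the catenation $II'$ and the matching of the middle indices in $\updownarrow$) being satisfied trivially by empty lists.

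Before the main chain I would record the elementary reductions supplied by the axioms. From the horizontal unit property one gets $Id_{[]}\leftrightarrow\varepsilon=\varepsilon\leftrightarrow Id_{[]}=Id_{[]}$; from the graded vertical unit property, instantiated at $I=[]$, one gets both $Id_{[]}\updownarrow Id_{[]}=Id_{[]}$ and $Id_{[]}\updownarrow\varepsilon=\varepsilon\updownarrow Id_{[]}=\varepsilon$. These identities let me exhibit $Id_{[]}$ as a vertical composite of two horizontal composites, to which the interchange law can then be applied.

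The heart of the proof is the single chain
$$Id_{[]}=\begin{array}{c}Id_{[]}\leftrightarrow\varepsilon\\\updownarrow\\\varepsilon\leftrightarrow Id_{[]}\end{array}=\begin{array}{c}Id_{[]}\\\updownarrow\\\varepsilon\end{array}\leftrightarrow\begin{array}{c}\varepsilon\\\updownarrow\\Id_{[]}\end{array}=\varepsilon\leftrightarrow\varepsilon=\varepsilon,$$
in which the first equality uses the reductions of the previous paragraph, the second is the interchange law (\ref{mass}) read from right to left, the third uses $Id_{[]}\updownarrow\varepsilon=\varepsilon\updownarrow Id_{[]}=\varepsilon$, and the last uses $\varepsilon\leftrightarrow\varepsilon=\varepsilon$. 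This gives $\varepsilon=Id_{[]}$, as claimed.

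I do not expect any genuine obstacle: the argument is the exact colored analogue of the one already carried out for PROs. The only point deserving a line of care is the grading bookkeeping, namely checking that $I=[]$ is a legitimate instance of the colored unit axioms, so that $Id_{[]}$ behaves toward $\varepsilon$ precisely as $Id_0$ behaves toward the uncolored horizontal unit; once this is granted the computation is purely formal.
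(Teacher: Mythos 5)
Your proof is correct and is exactly the argument the paper intends: the lemma is stated with the remark ``As in the case of PRO we have,'' and the paper's own computation for ordinary PROs, namely $Id_0=(Id_0\leftrightarrow\varepsilon)\updownarrow(\varepsilon\leftrightarrow Id_0)=(Id_0\updownarrow\varepsilon)\leftrightarrow(\varepsilon\updownarrow Id_0)=\varepsilon\leftrightarrow\varepsilon=\varepsilon$, is what you have transcribed with $[]$ in place of $0$. Your additional care about the grading of $\varepsilon$ and $Id_{[]}$ in $\mathcal P_{[],[]}$ is exactly the bookkeeping needed for the interchange law to apply, so nothing is missing.
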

In terms of category, a colored PRO is a strict monoidal category such that the objects are vectors of colors and the tensor product sends $([c_{1},\dots,c_{m}],[d_{1},\dots,d_{n}])$ to the
catenation $[c_{1},\dots,c_{m},d_{1},\dots,d_{n}]$ of the two vectors. The unit object of the category is the empty vector $[]$. 
\subsection{ModPro\label{ModPro}}

\def\aa{\mathtt a}
\def\bb{\mathtt b}
\def\cc{\mathtt c}
\def\dd{\mathtt d}
A $\mathbb K$-\emph{ModPro} $\mathtt M=\bigcup_{n,m\geq 0}\mathtt M_{n,m}$ is a PRO equipped with two additional 
operations $+$ and $\cdot$ which confer to each $\mathtt M_{n,m}$ a structure of $\mathbb K$-module satisfying
the following rules:
\begin{itemize}
\item {\it Left and right distributivities}.
\begin{equation}\label{lrdist}
\mathtt p\leftrightarrow (\mathtt q+\mathtt r)=(\mathtt p\leftrightarrow\mathtt q)+(\mathtt p\leftrightarrow\mathtt r),\  
 (\mathtt q+\mathtt r)\leftrightarrow\mathtt p=(\mathtt 
 q\leftrightarrow\mathtt p)+(\mathtt r\leftrightarrow\mathtt p),
\end{equation}
\item {\it Up and down distributivities}.
\begin{equation}\label{uddist}
\begin{array}{c}
(\mathtt p_1+\mathtt p_2)\\
\updownarrow\\
\mathtt q
\end{array}=\left(\begin{array}{c}
\mathtt p_1\\
\updownarrow\\
\mathtt q
\end{array}\right)+
\left(\begin{array}{c}
\mathtt p_2\\
\updownarrow\\
\mathtt q
\end{array}\right),\,
\begin{array}{c}
\mathtt q\\
\updownarrow\\
(\mathtt p_1+\mathtt p_2)
\end{array}=\left(\begin{array}{c}
\mathtt q\\
\updownarrow\\
\mathtt p_1
\end{array}\right)+
\left(\begin{array}{c}
\mathtt q\\
\updownarrow\\
\mathtt p_2
\end{array}\right),
\end{equation}
\item $a\cdot (\mathtt p\leftrightarrow \mathtt q)=(a\cdot \mathtt p)\leftrightarrow \mathtt q=\mathtt p\leftrightarrow{}
(a\cdot \mathtt q)$.
\item $a\cdot\left(\begin{array}{c}\mathtt p\\\updownarrow\\\mathtt  q \end{array}\right)={}
\left(\begin{array}{c}a\cdot \mathtt p\\\updownarrow\\\mathtt  q \end{array}\right)={}
\left(\begin{array}{c}\mathtt p\\\updownarrow\\a\cdot \mathtt  q \end{array}\right).
$
\end{itemize}

Notice that $(\mathtt M_{0,0},+,\leftrightarrow)$ is a semiring.
ModPros can alternatively be defined in terms of enriched categories 
(see Appendix \ref{AppModPro}). 



\subsection{SubPROs and Quotients\label{Quot}}
A \emph{subPRO} of a PRO $\mathcal P$ is a subset $\mathcal P'\subset 
\mathcal P$ containing any $Id_{n}$ and which is stable for the 
compositions $\leftrightarrow$ and $\updownarrow$.\\
As for many algebraic structures, there is a notion of quotient of PRO. A \emph{congruence} $\equiv$ is an equivalence relation which is compatible
with the graduation and the two compositions. This means 
\begin{enumerate}
	\item $\mathfrak p\equiv \mathfrak q$ implies $\mathfrak p,\mathfrak q\in\mathcal P_{m,n}$ for some $m,n\in\mathbb N$,
	\item $\mathfrak p\equiv\mathfrak p'\in\mathcal P_{m,n}$, $\mathfrak q\equiv\mathfrak q'\in\mathcal P_{m',n'}$ implies 
	$\mathfrak p\leftrightarrow\mathfrak q\equiv \mathfrak 
	p'\leftrightarrow\mathfrak q'$, and
	\item  $\mathfrak p\equiv\mathfrak p'\in\mathcal P_{m,n}$, $\mathfrak q\equiv\mathfrak q'\in\mathcal P_{n,p}$ implies 
	$\begin{array}{c}\mathfrak p\\\updownarrow\\\mathfrak q\end{array}\equiv \begin{array}{c}\mathfrak p'\\\updownarrow\\\mathfrak q'\end{array}$.
	\end{enumerate}
As a consequence,  the quotient set $\mathcal 
P/_{\equiv}=\bigcup_{m,n}\mathcal P_{m,n}/_\equiv$ inherits  a structure of PRO  
 from $\mathcal P$. 
The PRO $\mathcal 
P/_{\equiv}$ is called
the \emph{quotient} of $\mathcal P$ by $\equiv$.
This also defines a morphism of PRO $\phi_{\equiv}:\mathcal P\rightarrow\mathcal P/_{\equiv}$ sending each element 
on its class.\\
Conversely, if $\phi:\mathcal P\rightarrow\mathcal Q$ is a morphism of PRO then the equivalence $\equiv_{\phi}$, defined by 
$\mathfrak p\equiv\mathfrak q$ if and only if $\phi(\mathfrak p)=\phi(\mathfrak q)$, is a congruence and $\mathcal P/_{\equiv_{\phi}}$ is
isomorphic to $\phi(\mathcal P)\subset \mathcal Q$, the subPRO of $\mathcal Q$ which is the image of $\mathcal P$.\\
Similarly, one defines \emph{quotient of ModPro} as a quotient of PRO such that the restriction to each graded component is a quotient of $\mathbb K$-module.

\section{Freeness\label{Free}}

Freeness is a notion generalizing the concept of basis in a vector 
space to other structures. In category theory, free objects satisfies,
when they exist, the universal property. 
Consider the forgetful (faithful) functor $\mathbf F$ sending each PRO to its underlying graded set.
 Let $\mathcal X=\bigcup_{m,n}\mathcal X_{m,n}$ be a bigraded set. 
 A PRO $\mathcal P$ is the free PRO on $\mathcal X$ if there exists a canonical bigraded injection 
 $i:\mathcal X\rightarrow \mathbf F(\mathcal P)$ satisfying the following universal property: 
 for any PRO $\mathcal P'$ and any bigraded map $f:\mathcal X\rightarrow \mathbf F(\mathcal P')$ 
 there exists a unique morphism of PRO $\overline f:\mathcal P\rightarrow \mathcal P'$ such that $f=\mathbf F(g)\circ i$. 
 When it exists, such a PRO is denoted by $\mathcal F(\mathcal X)$. In this case,
  if a PRO $\mathcal P$ is generated by $\mathcal X$ then
 there exists an onto morphism from $\psi:\mathcal F(\mathcal X)\rightarrow \mathcal P$ sending each element $\mathfrak x$ to itself. 
 In other words, $\mathcal P$ is isomorphic to the quotient $\mathcal F(\mathcal X)/_{\equiv_{\psi}}$.
\subsection{Circuit PROs\label{CircPro}}
We consider a graded set $\mathcal C=\bigcup_{n,m\geq 1}\mathcal C_{n,m}$
 such that each $\mathcal C_{n,m}$ is finite. The elements of $\mathcal C$ are the \emph{chips}. 
 Each chip $\mathfrak c\in \mathcal C_{n,m}$ is graphically represented by a labeled box with $n$ \emph{outputs},
  drawn from the top and $m$ \emph{inputs} drawn on the bottom of the box (See in Fig \ref{ChipsA} for an example of a $(3,4)$-chip).

\medskip

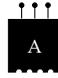
\begin{figure}[h]
\begin{center}
\begin{tikzpicture}
\chip 00{0.7}{0.7}34{A}
\end{tikzpicture}

\end{center}

\caption{A chip with $3$ outputs and $4$ inputs\label{ChipsA}}
\end{figure}
A \emph{circuit} is an object with outputs and inputs constructed inductively as follow.
\begin{definition}
A circuit is
\begin{enumerate}
\item either the \emph{empty circuit} denoted by $\oslash$ with $0$ output and $0$ input,
\item or a \emph{a wire} denoted by $\multimapdotbothBvert$ with $1$ output and $1$ intput,
\item or a chip,
\item or the juxtaposition $\mathfrak p\ \mathfrak p'$ of two circuits $\mathfrak p$ and $\mathfrak p'$ 
with the rule $\oslash  \mathfrak p=\mathfrak p\oslash=\mathfrak p$. 
The outputs (resp. the inputs) of $\mathfrak p\ \mathfrak p'$ is obtained by catening the outputs
(resp. the inputs) of $\mathfrak p$ and the outputs (resp. the inputs) of $\mathfrak p'$. We denote by $\leftrightarrow$ the operation of juxtaposition.
\item or the connection of a $(m,n)$-circuit $\mathfrak p$ and a $(n,p)$-circuit $\mathfrak q$ 
obtained by connecting the $i$-th input of $\mathfrak p$ to the $i$-th output of $\mathfrak q$ with the following two rules:
\begin{itemize}\item connecting $\multimapdotbothBvert\dots \multimapdotbothBvert$ to the outputs or the inputs of a circuit let 
the circuit unchanged,
\item connecting $\oslash$ with itself is still $\oslash$.
\end{itemize}
 The outputs of the connection of $\mathfrak p$ and $\mathfrak q$ are the outputs of $\mathfrak p$ and its inputs are the inputs of $\mathfrak q$. We  denote by $\updownarrow$ the operation of connection.
\end{enumerate}
Chips and  wires are called \emph{elementary circuits}.\\
We denote by $\mathcal Circ(\mathcal C)$ the set of the circuits obtained from the chips of $\mathcal C$.
\end{definition}
Notice that the only circuit having $0$ input or $0$ output is $\oslash$ and that any non empty circuit 
can be obtained by juxtaposing and connecting elementary circuits (see 
Figure \ref{decompcirc} for an example).
\begin{figure}[h]
\begin{center}
\begin{tikzpicture}
\pro 02{0.5}{0.5}34A{A}
\pro {0.18}1{0.5}{0.5}34A{AA}
\pro {0.5}0{0.5}{0.5}21B{B}
\socket 02{0.5}{0.5}3
\plug {0.5}{0}{0.5}{0.5}1
\draw[color=white,thick] (ddA3)--(dA3);
\draw[color=white,thick] (ddA2)--(dA2);
\draw[color=white,thick] (ddA1)--(dA1);
\draw[color=white,thick] (ddA4)--(dA4);
\draw[color=white,thick] (uuAA1)--(uAA1);
\draw[color=white,thick] (uuAA2)--(uAA2);
\draw[color=white,thick] (uuAA3)--(uAA3);
\draw[color=white,thick] (ddAA3)--(dAA3);
\draw[color=white,thick] (ddAA2)--(dAA2);
\draw[color=white,thick] (ddAA4)--(dAA4);
\draw[color=white,thick] (ddAA1)--(dAA1);
\draw[color=white,thick] (uuB1)--(uB1);
\draw[color=white,thick] (uuB2)--(uB2);
\draw[color=white,thick] (ddB1)--(dB1);
\draw (ddA3)--(uuAA1);
\draw (ddA4)--(uuAA2);
\draw (0.55,2.6)--(uuAA3);
\draw (ddA2)--(0,0);
\draw (ddA1)--(-0.2,0);
\draw (ddAA4)--(uuB1);
\draw (ddAA1)--(0.1,0);
\draw (ddAA2)--(0.2,0);
\draw (ddAA3)--(0.3,0);
\draw (uuB2)--(0.8,2.6);
\draw[fill=black] (0.55,2.6) circle [radius=0.03];
\draw[fill=black] (0.8,2.6) circle [radius=0.03];
\draw[fill=white] (0,0) circle [radius=0.03];
\draw[fill=white] (-0.2,0) circle [radius=0.03];
\draw[fill=white] (0.1,0) circle [radius=0.03];
\draw[fill=white] (0.2,0) circle [radius=0.03];
\draw[fill=white] (0.3,0) circle [radius=0.03];
\end{tikzpicture}
\ \ \ \ \ \ \ \ \ \ \ \ 
\begin{tikzpicture}
\pro 02{0.5}{0.5}34A{A}
\pro {0.18}1{0.5}{0.5}34A{AA}
\pro {0.5}0{0.5}{0.5}21B{B}

\draw[color=white,thick] (ddA3)--(dA3);
\draw[color=white,thick] (ddA2)--(dA2);
\draw[color=white,thick] (ddA1)--(dA1);
\draw[color=white,thick] (ddA4)--(dA4);
\draw[color=white,thick] (uuAA1)--(uAA1);
\draw[color=white,thick] (uuAA2)--(uAA2);
\draw[color=white,thick] (uuAA3)--(uAA3);
\draw[color=white,thick] (ddAA3)--(dAA3);
\draw[color=white,thick] (ddAA2)--(dAA2);
\draw[color=white,thick] (ddAA4)--(dAA4);
\draw[color=white,thick] (ddAA1)--(dAA1);
\draw[color=white,thick] (uuB1)--(uB1);
\draw[color=white,thick] (uuB2)--(uB2);
\draw[color=white,thick] (ddB1)--(dB1);
\draw[densely dotted](ddA1)--(-0.1,1.6);
\draw[densely dotted](ddA2)--(0.1,1.6);
\draw[densely dotted](ddA3)--(uuAA1);
\draw[densely dotted](ddA4)--(uuAA2);
\draw[densely dotted](0.7,2)--(uuAA3);
\draw[densely dotted](0.9,2)--(0.9,1.6);
\draw[densely dotted](-0.1,1)--(-0.1,0.6);
\draw[densely dotted](0.1,1)--(0,0.6);
\draw[densely dotted](ddAA1)--(0.1,0.6);
\draw[densely dotted](ddAA2)--(0.2,0.6);
\draw[densely dotted](ddAA3)--(0.3,0.6);
\draw[densely dotted](ddAA4)--(uuB1);
\draw[densely dotted](0.9,1)--(uuB2);

\chip 02{0.5}{0.5}34A
\chip {0.18}1{0.5}{0.5}34A
\chip {0.5}0{0.5}{0.5}21B
\draw (0.7,2.6)--(0.7,2);
\draw (0.9,2.6)--(0.9,2);
\draw (0.9,1.6)--(0.9,1);
\draw[fill=white] (0.7,2) circle [radius=0.03];
\draw[fill=white] (0.9,2) circle [radius=0.03];
\draw[fill=white] (0.9,1) circle [radius=0.03];

\draw[fill=black] (0.7,2.6) circle [radius=0.03];
\draw[fill=black] (0.9,2.6) circle [radius=0.03];
\draw[fill=black] (0.9,1.6) circle [radius=0.03];
\draw (0.1,1.6)--(0.1,1);
\draw (-0.1,1.6)--(-0.1,1);
\draw[fill=white] (0.1,1) circle [radius=0.03];
\draw[fill=white] (-0.1,1) circle [radius=0.03];
\draw[fill=black] (0.1,1.6) circle [radius=0.03];
\draw[fill=black] (-0.1,1.6) circle [radius=0.03];

\draw (-0.1,0.6)--(-0.1,0);
\draw (0,0.6)--(0,0);
\draw (0.1,0.6)--(0.1,0);
\draw (0.2,0.6)--(0.2,0);
\draw (0.3,0.6)--(0.3,0);
\draw[fill=white] (-0.1,0) circle [radius=0.03];
\draw[fill=white] (0,0) circle [radius=0.03];
\draw[fill=white] (0.1,0) circle [radius=0.03];
\draw[fill=white] (0.2,0) circle [radius=0.03];
\draw[fill=white] (0.3,0) circle [radius=0.03];
\draw[fill=black] (-0.1,0.6) circle [radius=0.03];
\draw[fill=black] (0,0.6) circle [radius=0.03];
\draw[fill=black] (0.1,0.6) circle [radius=0.03];
\draw[fill=black] (0.2,0.6) circle [radius=0.03];
\draw[fill=black] (0.3,0.6) circle [radius=0.03];

\end{tikzpicture}
\caption{A circuit and a decomposition into elementary circuits\label{decompcirc}}
\end{center}
\end{figure}
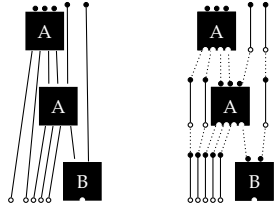
\ \\Straightforwardly from the definition we obtain the following property.
\begin{proposition}\label{propCirc}
The operations $\leftrightarrow$ and $\updownarrow$ endow $\mathcal Circ(\mathcal C)$ with a structure of PRO.
\end{proposition}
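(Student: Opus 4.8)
The plan is to verify the five axioms of the combinatorial definition of a PRO directly for $\mathcal Circ(\mathcal C)$, treating every circuit as a planar diagram built by pasting elementary circuits and reading off each axiom from the pictures. First I would settle the bookkeeping. By induction on the construction of a circuit one checks that its number of outputs and its number of inputs are well defined, and that the two operations carry the gradings demanded by the definition: the juxtaposition $\mathfrak p\leftrightarrow\mathfrak p'$ catenates outputs and catenates inputs, hence adds the two bidegrees, whereas the connection of $\mathfrak p$ and $\mathfrak q$ is formed only when the inputs of $\mathfrak p$ match in number the outputs of $\mathfrak q$, and then keeps the outputs of $\mathfrak p$ and the inputs of $\mathfrak q$. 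This places $\leftrightarrow$ and $\updownarrow$ in the maps $\mathcal Circ(\mathcal C)_{m,n}\times\mathcal Circ(\mathcal C)_{m',n'}\to\mathcal Circ(\mathcal C)_{m+m',n+n'}$ and $\mathcal Circ(\mathcal C)_{m,n}\times\mathcal Circ(\mathcal C)_{n,p}\to\mathcal Circ(\mathcal C)_{m,p}$ required by the definition.

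Next I would exhibit the units and dispose of the axioms that are immediate from the construction. The horizontal unit is the empty circuit: the rule $\oslash\leftrightarrow\mathfrak p=\mathfrak p\leftrightarrow\oslash=\mathfrak p$ is built into juxtaposition, so (\ref{hneut}) holds with $\varepsilon=\oslash$. The graded vertical units are the bundles of parallel wires, with $Id_0=\oslash$, with $Id_1$ a single wire, and with $Id_n$ the juxtaposition of $n$ wires; relation (\ref{grneut3}) then holds by definition, while (\ref{grneut}) is exactly the connection rule stating that attaching parallel wires to the outputs or to the inputs of a circuit leaves it unchanged, the rule that connecting $\oslash$ with itself yields $\oslash$ covering the case $n=0$. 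For the two associativities (\ref{hass}) and (\ref{vass}) I would argue that placing three circuits side by side, respectively stacking three circuits one below another, produces the same diagram independently of the bracketing, so that both reassociations return the same circuit.

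The heart of the argument is the interchange law (\ref{mass}), which is also the only place carrying a genuine subtlety. For matching bidegrees, both sides of (\ref{mass}) depict the same figure: a left block in which $\mathfrak p$ sits above $\mathfrak q$ and a right block in which $\mathfrak p'$ sits above $\mathfrak q'$, the two blocks juxtaposed with no wire passing between them, as in the decomposition displayed in Figure \ref{decompcirc}. Read at the level of pictures the equality is immediate, and the same observation trivialises (\ref{hass}) and (\ref{vass}). The real content of the proposition is therefore to fix a formalization of circuits in which ``the same figure'' is a precise notion, so that these three equalities become consequences of the definition rather than extra relations; I expect this formalization step to be the main obstacle. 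Concretely, I would encode a circuit by the combinatorial data of a layered decomposition --- an ordered list of layers, each layer a juxtaposition of chips and wires, together with the incidence matching the inputs of each layer to the outputs of the next --- two such data being declared equal when they agree after collapsing wires and erasing occurrences of $\oslash$. One then checks that $\leftrightarrow$ and $\updownarrow$ are well defined on this data and that (\ref{hass}), (\ref{vass}) and (\ref{mass}) hold as equalities of the underlying layered arrays. Once this encoding is in place all five axioms are verified and the proposition follows.
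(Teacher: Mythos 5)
Your proposal is correct, and its skeleton coincides with the paper's route, because the paper in fact offers no argument at all: the proposition is introduced by the single sentence ``Straightforwardly from the definition we obtain the following property,'' which is precisely the position you articulate, namely that once circuits are taken as diagrams built from elementary circuits, the units, the gradings, the two associativities and the interchange law are read off the pictures. Where you genuinely go beyond the paper is in recognizing that this reading-off presupposes a precise notion of when two inductive constructions produce \emph{the same} circuit, and in proposing a concrete encoding (layered arrays of chips and wires, modulo collapsing wires and erasing $\oslash$) under which (\ref{hass}), (\ref{vass}) and (\ref{mass}) become equalities of combinatorial data rather than visual judgments; the paper leaves circuit equality at the informal graphical level, with Figure \ref{decompcirc} playing the role of your ``same figure'' argument. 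One caution about your encoding, so that it actually delivers the interchange law: ``collapsing wires'' must be understood as contracting \emph{every} wire segment of the glued diagram, thereby forgetting the vertical position of each chip and retaining only the connectivity together with the left-to-right order, and not merely as deleting layers consisting entirely of wires. Under the weaker reading, the arrays $[\mathfrak p\leftrightarrow Id_1\,;\,Id_1\leftrightarrow\mathfrak q]$ and $[\mathfrak p\leftrightarrow\mathfrak q]$ would remain distinct even though they denote the same circuit (a chip slid vertically past a wire), and (\ref{mass}) would fail for arrays of unequal depth that must be padded by wire layers before layerwise juxtaposition; under the contraction reading both sides of each axiom yield identical data and your verification goes through. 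It is also worth making explicit, as the paper does, that $\oslash$ is the only circuit with $0$ inputs or $0$ outputs, so the $n=0$ instance of the vertical unit axiom (\ref{grneut}) reduces to the defining rule $\oslash\updownarrow\oslash=\oslash$, exactly as you use it.
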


This Pro is known to be isomorphic to $\mathcal F(\mathcal C)$ (see e.g. \cite{BG}).

A circuit $\mathfrak p$ is \emph{deconnected} if it is the juxtaposition of two non empty circuits otherwise it is \emph{connected}. The \emph{connected components} of $\mathfrak p$ is the set of the non-empty connected circuits $\mathfrak p_1,\dots, \mathfrak p_k$ such that $$\mathfrak p=\mathfrak p_1\leftrightarrow \mathfrak p_2\leftrightarrow \cdots\leftrightarrow \mathfrak p_k.$$


\begin{proposition}\label{Pconnected}
Let $\mathfrak p$ be a non empty circuit. The following assertions are equivalent:
\begin{enumerate}
\item $\mathfrak p$ is connected.
\item $\mathfrak p$ has only one connected component.
\item $\mathfrak p$ is 
\begin{itemize}
\item either a chip
\item or a wire
\item or $\mathfrak p$ is a non elementary circuit satisfying
\[
\mathfrak p=\begin{array}{c}(\mathfrak p_1\leftrightarrow\cdots\leftrightarrow\mathfrak p_k)\\\updownarrow\\
(\mathfrak q_1\leftrightarrow\cdots\leftrightarrow\mathfrak q_\ell)\end{array}
\]
where $\mathfrak p_i\in \mathcal Circ(\mathcal C)_{m_i,n_i}$,
 $\mathfrak q_i\in \mathcal Circ(\mathcal C)_{p_i,q_i}$ are connected circuits such that   $n_1+\cdots+n_i= p_1+\cdots+p_j$ implies $i=k$ and $j=\ell$.
\end{itemize}
\end{enumerate}
\end{proposition}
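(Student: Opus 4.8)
The plan is to prove the two equivalences $(1)\Leftrightarrow(2)$ and $(1)\Leftrightarrow(3)$ separately, treating the passage between connectedness and the structural normal form of $(3)$ as the real content. Throughout I rely on horizontal and vertical associativity (\ref{hass}), (\ref{vass}), the interchange law (\ref{mass}), on Proposition \ref{propCirc}, and on the remark that the only circuit with $0$ input or $0$ output is $\oslash$, so that every connected non-empty circuit has at least one input and one output. I first dispose of $(1)\Leftrightarrow(2)$. Every non-empty circuit admits a decomposition into connected components, by induction on the number of elementary circuits: if $\mathfrak p$ is connected it is its own single component, and if $\mathfrak p=\mathfrak a\leftrightarrow\mathfrak b$ with $\mathfrak a,\mathfrak b$ non-empty, one recurses and concatenates by (\ref{hass}). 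If $\mathfrak p$ is connected, then any decomposition $\mathfrak p=\mathfrak p_1\leftrightarrow\cdots\leftrightarrow\mathfrak p_k$ with $k\geq 2$ would display $\mathfrak p=\mathfrak p_1\leftrightarrow(\mathfrak p_2\leftrightarrow\cdots\leftrightarrow\mathfrak p_k)$ as a juxtaposition of two non-empty circuits, contradicting connectedness; hence $k=1$. Conversely a circuit with a single component equals that (connected) component. This gives $(1)\Leftrightarrow(2)$, and uniqueness of the number of components follows from the unique readability of $\mathcal Circ(\mathcal C)\cong\mathcal F(\mathcal C)$.

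For $(1)\Rightarrow(3)$: if $\mathfrak p$ is a chip or a wire we are done, so assume it is non-elementary. Unwinding its inductive construction, using that it is \emph{not} a juxtaposition of two non-empty circuits (by connectedness) and absorbing empty circuits and wires via the reduction rules, shows that $\mathfrak p$ must arise as a genuine connection $\mathfrak p=\mathfrak P\updownarrow\mathfrak Q$ with $\mathfrak P,\mathfrak Q$ non-empty. Decomposing $\mathfrak P=\mathfrak p_1\leftrightarrow\cdots\leftrightarrow\mathfrak p_k$ and $\mathfrak Q=\mathfrak q_1\leftrightarrow\cdots\leftrightarrow\mathfrak q_\ell$ into connected components (by the first paragraph) yields the required shape, and it only remains to check the partial-sum condition. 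Suppose, for contradiction, that $n_1+\cdots+n_i=p_1+\cdots+p_j$ with $(i,j)\neq(k,\ell)$; since all the $n$'s and $p$'s are positive, the common value is strictly below $S:=\sum n=\sum p$, so $1\leq i<k$ and $1\leq j<\ell$. Then the interchange law (\ref{mass}) factorizes
\[
\mathfrak p=\left(\begin{array}{c}(\mathfrak p_1\leftrightarrow\cdots\leftrightarrow\mathfrak p_i)\\\updownarrow\\(\mathfrak q_1\leftrightarrow\cdots\leftrightarrow\mathfrak q_j)\end{array}\right)\leftrightarrow\left(\begin{array}{c}(\mathfrak p_{i+1}\leftrightarrow\cdots\leftrightarrow\mathfrak p_k)\\\updownarrow\\(\mathfrak q_{j+1}\leftrightarrow\cdots\leftrightarrow\mathfrak q_\ell)\end{array}\right),
\]
where both vertical compositions are well defined precisely because the two partial sums agree (and agree on the complementary pieces as well). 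Both factors are non-empty, so $\mathfrak p$ is deconnected, a contradiction. Hence the condition holds.

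For $(3)\Rightarrow(1)$: a chip or a wire is not the juxtaposition of two non-empty circuits (its set of elementary boxes is a singleton, whereas such a juxtaposition draws boxes from both factors), so it is connected. For the two-layer form I argue the contrapositive: if $\mathfrak p=\mathfrak P\updownarrow\mathfrak Q$ is deconnected, then an intermediate common partial sum must exist. The cleanest formulation uses the bipartite \emph{overlap graph} $G$ whose vertices are the $\mathfrak p_i$ and $\mathfrak q_j$, with an edge joining $\mathfrak p_i$ to $\mathfrak q_j$ when the block of connecting wires of $\mathfrak p_i$ (the interval determined by $n_1+\cdots+n_{i-1}$ and $n_1+\cdots+n_i$) meets that of $\mathfrak q_j$. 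Two facts drive the correspondence: first, the connected components of $\mathfrak p$ partition its elementary boxes, and each connected sub-circuit $\mathfrak p_i$ or $\mathfrak q_j$ lies wholly inside one of them; second, if $\mathfrak p_i$ and $\mathfrak q_j$ share a connecting wire they lie in the same component of $\mathfrak p$. Together these force every connected component of $G$ into a single component of $\mathfrak p$, and $G$ is disconnected exactly when there is a cut position straddled by no block, i.e. an intermediate common partial sum. Thus $\mathfrak p$ deconnected gives $G$ disconnected gives a forbidden common partial sum.

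The hard part will be the last paragraph: making rigorous the identification of the inductive/algebraic notion of connected component with the graph-theoretic picture, that is, establishing the two driving facts and the uniqueness of the connected-component decomposition. This rests on the unique readability of elements of $\mathcal Circ(\mathcal C)\cong\mathcal F(\mathcal C)$ guaranteed by the freeness recorded after Proposition \ref{propCirc}, which is what prevents a horizontal split of $\mathfrak p$ from cutting through a connected component of either layer and thereby pins the split onto a shared boundary of both layers. I would either invoke this uniqueness directly or prove the two facts by induction on the number of elementary circuits; the finicky point requiring most care is also the reduction, in $(1)\Rightarrow(3)$, of a connected non-elementary circuit to a genuine connection $\mathfrak P\updownarrow\mathfrak Q$, where one must carefully absorb the wire and empty-circuit reductions to ensure both layers are non-trivial.
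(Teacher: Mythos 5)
Your proposal is correct, and its central step coincides exactly with the paper's own proof: the paper's entire argument consists of noting that chips and wires are connected and then, assuming the partial-sum condition fails at some $(i,j)\neq(k,\ell)$, using the interchange law (\ref{mass}) to factor $\mathfrak p$ as a juxtaposition of two non-empty vertical compositions --- precisely the display in your second paragraph. The difference is one of scope. The paper stops there: it leaves $(1)\Leftrightarrow(2)$ implicit, does not verify that a connected non-elementary circuit can be written as a genuine connection of two non-trivial layers, and gives no argument for the converse $(3)\Rightarrow(1)$ beyond the elementary cases. You supply all three, and your bipartite overlap-graph argument for $(3)\Rightarrow(1)$ is the genuinely new piece; it is sound, but, as you yourself flag, its two driving facts (a connected $\mathfrak p_i$ or $\mathfrak q_j$ lies inside a single component of $\mathfrak p$, and sharing a connecting wire merges components) are not consequences of the PRO axioms alone --- they require the unique-readability/combinatorial model of $\mathcal F(\mathcal C)$, which is exactly the technical debt the paper avoids by omitting that direction altogether. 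A last structural remark: since item (3) is existential, $(1)\Rightarrow(3)$ even admits the degenerate witness $\mathfrak p=\mathfrak p\updownarrow(|\leftrightarrow\cdots\leftrightarrow|)$, whose partial-sum condition holds vacuously; what your factorization argument (like the paper's) actually establishes is the stronger universal statement that \emph{every} two-layer decomposition of a connected circuit into connected pieces satisfies the condition, and it is this stronger form, not the bare existential one, that the induction proving Theorem \ref{sumRep} relies on.
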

\begin{proof}
Chips and wires are connected circuits. Suppose that $\mathfrak p$ is a non elementary circuit which does not satisfy the property of 3: there exists 
$i\not =k$ and $j\not =\ell$ such that $n_1+\cdots+n_i= p_1+\cdots+p_j$. 
Then  
\[
\mathfrak p=\begin{array}{c}(\mathfrak p_1\leftrightarrow\cdots\leftrightarrow\mathfrak p_i)\\\updownarrow\\
(\mathfrak q_1\leftrightarrow\cdots\leftrightarrow\mathfrak q_j)\end{array}
\leftrightarrow
\begin{array}{c}(\mathfrak p_{i+1}\leftrightarrow\cdots\leftrightarrow\mathfrak p_k)\\\updownarrow\\
(\mathfrak q_{j+1}\leftrightarrow\cdots\leftrightarrow\mathfrak q_\ell)
\end{array}
\]
\end{proof}
\subsection{Other free PROs\label{Others}}
The aim of this section is to discuss about how to represent the elements of a free PRO in the general case.
Let $\mathcal X=\bigcup_{m,n\geq 0}\mathcal X_{m,n}$ be a bigraded set. 
In the previous section, we treat the case where $\mathcal X_{n,0}=\mathcal X_{0,m}=\emptyset$ which 
seems to be one of the simplest cases.
In general, we have more relations in $\mathcal F(\mathcal X)$ which are consequences of the interchange law.
Indeed, we have the following proposition.
\begin{proposition}\label{commuteP0} Let $\mathcal P=\bigcup_{m,n}\mathcal P_{m,n}$ be a PRO.
Let $\mathfrak p\in\mathcal P_{0,n}$ and $\mathfrak q\in\mathcal P_{m,0}$. We have
\begin{equation}
\mathfrak p\leftrightarrow\mathfrak q=\begin{array}{c}\mathfrak q\\\updownarrow\\\mathfrak p\end{array}=\mathfrak q\leftrightarrow \mathfrak p.
\end{equation}
Moreover if $m=n=0$ then
 we have 
 $\begin{array}{c}\mathfrak p\\\updownarrow\\\mathfrak q\end{array}=\begin{array}{c}\mathfrak
  q\\\updownarrow\\\mathfrak p\end{array}$. 
\end{proposition}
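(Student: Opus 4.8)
The plan is to derive everything from the interchange law (\ref{mass}) together with the unit laws (\ref{grneut}) and (\ref{hneut}), exploiting that $Id_0=\varepsilon$ is simultaneously the graded vertical unit in degree $0$ and the horizontal unit (an identification established just before this subsection). The central idea is that $\mathfrak p$ and $\mathfrak q$ can be ``padded'' by $\varepsilon=Id_0$ so that each horizontal product $\mathfrak q\leftrightarrow\mathfrak p$ and $\mathfrak p\leftrightarrow\mathfrak q$ becomes a horizontal product of two vertical composites, which the interchange law then collapses into a single vertical composite.

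Concretely, I would first record the type bookkeeping: since $\mathfrak p\in\mathcal P_{0,n}$ and $\mathfrak q\in\mathcal P_{m,0}$, the three expressions $\mathfrak p\leftrightarrow\mathfrak q$, $\mathfrak q\leftrightarrow\mathfrak p$ and $\begin{array}{c}\mathfrak q\\\updownarrow\\\mathfrak p\end{array}$ all lie in $\mathcal P_{m,n}$, so the asserted equalities are well typed. Using the graded vertical unit (\ref{grneut}) in degree $0$, I would then rewrite $\mathfrak q=\begin{array}{c}\mathfrak q\\\updownarrow\\Id_0\end{array}$ (legitimate as $\mathfrak q\in\mathcal P_{m,0}$) and $\mathfrak p=\begin{array}{c}Id_0\\\updownarrow\\\mathfrak p\end{array}$ (legitimate as $\mathfrak p\in\mathcal P_{0,n}$). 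Applying the interchange law (\ref{mass}) to the horizontal product of these two vertical composites yields $\mathfrak q\leftrightarrow\mathfrak p=\begin{array}{c}(\mathfrak q\leftrightarrow Id_0)\\\updownarrow\\(Id_0\leftrightarrow\mathfrak p)\end{array}$, and the horizontal unit law (\ref{hneut}) with $Id_0=\varepsilon$ simplifies the right-hand side to $\begin{array}{c}\mathfrak q\\\updownarrow\\\mathfrak p\end{array}$. The equality $\mathfrak p\leftrightarrow\mathfrak q=\begin{array}{c}\mathfrak q\\\updownarrow\\\mathfrak p\end{array}$ is obtained symmetrically: padding as $\mathfrak p=\begin{array}{c}Id_0\\\updownarrow\\\mathfrak p\end{array}$ on the left and $\mathfrak q=\begin{array}{c}\mathfrak q\\\updownarrow\\Id_0\end{array}$ on the right, interchange produces $\begin{array}{c}(Id_0\leftrightarrow\mathfrak q)\\\updownarrow\\(\mathfrak p\leftrightarrow Id_0)\end{array}=\begin{array}{c}\mathfrak q\\\updownarrow\\\mathfrak p\end{array}$.

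For the additional claim when $m=n=0$, I would simply invoke the first part twice. Here both $\mathfrak p$ and $\mathfrak q$ lie in $\mathcal P_{0,0}$, so the hypotheses $\mathfrak p\in\mathcal P_{0,n}$ and $\mathfrak q\in\mathcal P_{m,0}$ continue to hold after exchanging the roles of $\mathfrak p$ and $\mathfrak q$. Applied verbatim, the first part gives $\begin{array}{c}\mathfrak q\\\updownarrow\\\mathfrak p\end{array}=\mathfrak p\leftrightarrow\mathfrak q$, while applied with $\mathfrak p$ and $\mathfrak q$ swapped it gives $\begin{array}{c}\mathfrak p\\\updownarrow\\\mathfrak q\end{array}=\mathfrak q\leftrightarrow\mathfrak p$. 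Combining these with the already established commutativity $\mathfrak p\leftrightarrow\mathfrak q=\mathfrak q\leftrightarrow\mathfrak p$ yields $\begin{array}{c}\mathfrak p\\\updownarrow\\\mathfrak q\end{array}=\begin{array}{c}\mathfrak q\\\updownarrow\\\mathfrak p\end{array}$.

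The computations are routine once the padding is set up, so I expect the only genuine subtlety to be the index bookkeeping for the interchange law: one must check that the inner indices match (so that $\mathfrak q\in\mathcal P_{m,0}$ sits atop $Id_0\in\mathcal P_{0,0}$, and $Id_0\in\mathcal P_{0,0}$ sits atop $\mathfrak p\in\mathcal P_{0,n}$) in order for each vertical composite to be defined, and one must use crucially the identification $\varepsilon=Id_0$ to discharge the stray factors $\mathfrak q\leftrightarrow Id_0$ and $Id_0\leftrightarrow\mathfrak p$ via (\ref{hneut}).
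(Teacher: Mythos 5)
Your proof is correct and follows essentially the same route as the paper: padding $\mathfrak p$ and $\mathfrak q$ with the unit $Id_0=\varepsilon=\oslash$, applying the interchange law, and discharging the stray unit factors via the horizontal unit law. The only (harmless) differences are presentational — the paper runs the whole chain $\mathfrak p\leftrightarrow\mathfrak q=\begin{array}{c}\mathfrak q\\\updownarrow\\\mathfrak p\end{array}=\mathfrak q\leftrightarrow\mathfrak p$ in a single display with interchange used twice, and it leaves the $m=n=0$ (Eckmann--Hilton) consequence implicit, whereas you derive it explicitly by applying the first part with the roles of $\mathfrak p$ and $\mathfrak q$ swapped.
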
  
\begin{proof}
This comes from
\begin{equation}
\begin{array}{rcl}
\mathfrak p\leftrightarrow\mathfrak q&=&
\left(\begin{array}{c}\oslash\\\updownarrow\\\mathfrak p\end{array}\right)\leftrightarrow
\left(\begin{array}{c}\mathfrak q\\\updownarrow\\\oslash\end{array}\right)
=\begin{array}{c}(\oslash\leftrightarrow\mathfrak q)\\\updownarrow\\ (\mathfrak p\leftrightarrow\oslash)\end{array}=\begin{array}{c}(\mathfrak q\leftrightarrow\oslash)\\\updownarrow\\ (\oslash\leftrightarrow\mathfrak p)\end{array}\\
&=&
\left(\begin{array}{c}\mathfrak q\\\updownarrow\\\oslash\end{array}\right)\leftrightarrow \left(\begin{array}{c}\oslash\\\updownarrow\\\mathfrak p\end{array}\right)
=\mathfrak q\leftrightarrow\mathfrak p
\end{array}
\end{equation}
\end{proof}
The special case $m=n=0$ is known as the Eckmann-Hilton argument\footnote{The  Eckmann-Hilton argument states that 
if a set $X$ is endowed with two unital binary operations $\cdot$ and $\star$ 
satisfying $(a\star b)\cdot(c\star d)=(a\cdot c)\star (b\cdot d)$, then $\star$ 
and $\cdot$ are the same commutative and associative operation.} \cite{EH}.
Proposition \ref{commuteP0} implies that it is rather difficult to describe in a combinatorial way the element of a generic free Pro.
 For instance, the drawings 
\begin{tikzpicture}
\chip 00{0.5}{0.5}02A
\chip {0.7}0{0.3}{0.5}00B
\end{tikzpicture}
 and
\begin{tikzpicture}
\chip {0.5}0{0.5}{0.5}02A
\chip {0}0{0.3}{0.5}00B
\end{tikzpicture}
must denote the same element which differs from the element whose drawing is
\begin{center}
\begin{tikzpicture}
\pro 011102A{A}
\draw (ddA1)--(1/3,0);
 \draw[fill=white] (1/3,0) circle [radius=0.03];
\draw (ddA2)--(2/3,0);
 \draw[fill=white] (2/3,0) circle [radius=0.03];
\pro {1/3+0.05}{0.2}{1/4}{0.5}00B{B}
\end{tikzpicture}
\end{center}

This suggests that one has to include in the description of the circuits a topological notion of zone. For instance, we can associate  the element 
\begin{center}
\begin{center}
\begin{tikzpicture}
\pro 011102A{A}
\draw (ddA1)--(1/3,0);
 \draw[fill=white] (1/3,0) circle [radius=0.03];
\draw (ddA2)--(2/3,0);
 \draw[fill=white] (2/3,0) circle [radius=0.03];
\pro {1/3+0.05}{0.2}{1/4}{0.5}00B{B}
\draw (1.3,2)-- (1.3,0);
\draw[fill=white] (1.3,0) circle [radius=0.03];
\draw[fill=black] (1.3,2) circle [radius=0.03];
\end{tikzpicture}
\end{center}

\end{center}
with the three zones below
\begin{center}
\begin{tikzpicture}
\draw[fill=blue,color=yellow] (-0.2,2.2) rectangle (1.3,-0.2);
\draw[fill=green,color=red] (1.3,2.2) rectangle (1.5,-0.2);
\draw[fill=green,color=green] (1/3,1) rectangle (2/3,-0.2);
\pro 011102A{A}
\draw (ddA1)--(1/3,0);
 \draw[fill=white] (1/3,0) circle [radius=0.03];
\draw (ddA2)--(2/3,0);
 \draw[fill=white] (2/3,0) circle [radius=0.03];
\pro {1/3+0.05}{0.2}{1/4}{0.5}00B{B}
\draw (1.3,2)-- (1.3,0);
\draw[fill=white] (1.3,0) circle [radius=0.03];
\draw[fill=black] (1.3,2) circle [radius=0.03];
\end{tikzpicture}
\end{center}
But,  the description of the vertical composition in terms of zones seems to be complicated  as suggested by the following example. Consider the  element above 
with three zones together with the element represented by
\begin{center}
\begin{tikzpicture}
\pro 002{0.5}20C{C}
\pro {2/3+0.2}{1}{0.3}{0.5}10D{D}
\draw (uuC1)--(2/3,2);
\draw (uuC2)--(4/3,2);
\draw (uuD1)--(2/3+0.2+0.15,2);
\draw[fill=black] (2/3,2) circle [radius=0.03];
\draw[fill=black] (4/3,2) circle [radius=0.03];
\draw[fill=black] (2/3+0.2+0.15,2) circle [radius=0.03];

\end{tikzpicture}
\end{center}
 which has two zones. Their vertical composition has only one zone and admits the following  graphical representations
\begin{center}
\begin{tikzpicture}
\pro 021100A{A}
\pro {1/3+0.05}{1.2}{1/4}{0.5}00B{B}
\pro {-1.5}{-0.7}4{0.5}00C{C}
\pro {2/3+0}{0}{0.3}{0.5}00D{D}
\draw (2/3+0.15,0.5)--(2/3+0.15,2.2);
\draw (0.25,-0.2)--(0.25,2.2);
\draw (4/3,-0.2)--(4/3,3);
\draw[fill=black] (4/3,3) circle [radius=0.03];

\end{tikzpicture}
$=$
\begin{tikzpicture}
\pro 021100A{A}
\pro {0.95}{1.2}{1/4}{0.5}00B{B}
\pro {-1.5}{-0.7}4{0.5}00C{C}
\pro {2/3+0}{0}{0.3}{0.5}00D{D}
\draw (2/3+0.15,0.5)--(2/3+0.15,2.2);
\draw (0.25,-0.2)--(0.25,2.2);
\draw (4/3,-0.2)--(4/3,3);
\draw[fill=black] (4/3,3) circle [radius=0.03];

\end{tikzpicture}
$=$
\begin{tikzpicture}
\pro 021100A{A}
\pro {1.5}{1.2}{1/4}{0.5}00B{B}
\pro {-1.5}{-0.7}4{0.5}00C{C}
\pro {2/3+0}{0}{0.3}{0.5}00D{D}
\draw (2/3+0.15,0.5)--(2/3+0.15,2.2);
\draw (0.25,-0.2)--(0.25,2.2);
\draw (4/3,-0.2)--(4/3,3);
\draw[fill=black] (4/3,3) circle [radius=0.03];

\end{tikzpicture}

\end{center}

Notice that certain cases are simpler, as suggested by the following remark.

\subsection{Path colored PROs\label{Path}}
Let $\mathcal C=\bigcup_{n,m\geq 1}\mathcal C_{n,m}$ be a graded set of chips and $N>0$. We construct the set $\mathcal Steps(\mathcal C,N)$ of the triplets $\left(\begin{array}{c}I\\\mathfrak c\\J\end{array}\right)$ with $\mathfrak c\in \mathcal C$ and $I\in[N]^n$ and $J\in[N]^m$ when $\mathfrak c\in \mathcal C_{n,m}$ (equivalently we label the inputs and the outputs of each chips by numbers in $[N]$). The set $\mathcal Paths(\mathcal C,N)$ of the \emph{paths} is obtained recursively as follows: a path is
\begin{itemize}
\item either the empty path denoted by $\oslash$ with $0$ input and $0$ output,
\item or a labelled wire $\displaystyle\mathop\multimapdotbothBvert^i_i$ with $1$ input and $1$ output,
\item or a labelled chips $\mathfrak c\in \mathcal Steps(\mathcal C,N)$,
\item or the juxtaposition $\mathfrak p\mathfrak p'$ of two paths with the rule $\oslash\mathfrak p=\mathfrak p\oslash=\mathfrak p$. The inputs (resp. the outputs) of $\mathfrak p\mathfrak p'$ are obtained by catening the inputs (resp. the outputs) of $\mathfrak p$ and those of $\mathfrak p'$. This operation will be denoted by $\leftrightarrow$.
\begin{figure}[h]
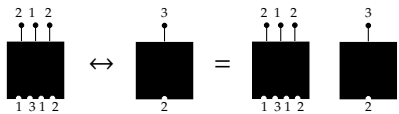

\begin{center}
$
\tikdanseq{
\chip 00{0.75}{0.75}34{}{a}
\node(n1) at (0.4,-0.1) {\tiny 1\ \ 3\ 1\ \ 2};
\node(n1) at (0.35,1.15) {\tiny 2\ \ 1\ \ \ 2};
}
\leftrightarrow
\tikdanseq{
\chip 00{0.75}{0.75}11{}{b}
\node(n1) at (0.75/2,-0.1){\tiny 2};
\node(n1) at (0.75/2,1.15){\tiny 3};
}
=
\tikdanseq{
\chip 00{0.75}{0.75}34{}{a}
\node(n1) at (0.4,-0.1) {\tiny 1\ \ 3\ 1\ \ 2};
\node(n1) at (0.35,1.15) {\tiny 2\ \ 1\ \ \ 2};
}
\tikdanseq{
\chip 00{0.75}{0.75}11{}{b}
\node(n1) at (0.75/2,-0.1){\tiny 2};
\node(n1) at (0.75/2,1.15){\tiny 3};
}
$
\end{center}
\caption{An example of horizontal composition of two paths.\label{fhcomppath}}
\end{figure}
\item or the connexion of a $(I,J)$-path $\mathfrak p$ to a $(J,K)$-path $\mathfrak q$ obtained by connecting each input labelled by $i$ in $\mathfrak p$ to the output labelled by $i$ in $\mathfrak q$ by a wire labelled by $i$ with the rule: connecting a $(I,J)$-path to a list of labelled wires let the path unchanged. The outputs of the connections are the output of $\mathfrak p$ and the inputs are those of $\mathfrak q$. We  denote by $\updownarrow$ this operation.
\begin{figure}[h]
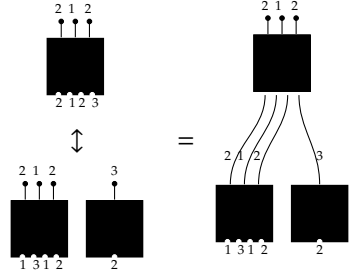

\begin{center}
\[\begin{array}{c}
\tikdanseq{
\chip 00{0.75}{0.75}34{}{a}
\node(n1) at (0.4,-0.1) {\tiny 2\ \ 1\ 2\ \ 3};
\node(n1) at (0.35,1.15) {\tiny 2\ \ 1\ \ \ 2};
}\\
\updownarrow\\
\tikdanseq{
\chip 00{0.75}{0.75}34{}{a}
\node(n1) at (0.4,-0.1) {\tiny 1\ \ 3\ 1\ \ 2};
\node(n1) at (0.35,1.15) {\tiny 2\ \ 1\ \ \ 2};
\chip 10{0.75}{0.75}11{}{b}
\node(n1) at (1+0.75/2,-0.1){\tiny 2};
\node(n1) at (1+0.75/2,1.15){\tiny 3};
}
\end{array}
=
\tikdanseq{
\pro 00{0.75}{0.75}34{}{a}
\node(n1) at (0.4,-0.1) {\tiny 1\ \ 3\ 1\ \ 2};
\node(n1) at (0.35,1.15) {\tiny 2\ \ 1\ \ \ 2};
\pro 10{0.75}{0.75}11{}{b}
\node(n1) at (1+0.75/2,-0.1){\tiny 2};
\node(n1) at (1+0.75/2,1.15){\tiny 3};
\plug 00{0.75}{0.75}4{}
\plug 10{0.75}{0.75}1{}
\chip {0.5}2{0.75}{0.75}34{}{c}
\pro {0.5}2{0.75}{0.75}34{}{c}
\node(n1) at (0.5+0.35,3.15) {\tiny 2\ \ 1\ \ \ 2};
\draw (uua1) to [out=90,in=270] (ddc1);
\draw (uua2) to [out=90,in=270] (ddc2);
\draw (uua3) to [out=90,in=270] (ddc3);
\draw (uub1) to [out=90,in=270] (ddc4);
}
\]
\end{center}
\caption{An example of vertical composition of two paths.\label{fvcomppath}}
\end{figure}

\end{itemize}
The set $\mathcal Paths(\mathcal C,N)$ is graded by the colors of the inputs and the outputs of the paths. For simplicity we  denote by $\mathtt{In}(\mathfrak q)$ (resp. $\mathtt{Out}(\mathfrak q)$) the vector of the colors of the input (resp. output) of $\mathfrak q$.
Straightforwardly from the definition:
\begin{proposition}
$\mathcal Paths(\mathcal C,N)$ is a colored PRO.
\end{proposition}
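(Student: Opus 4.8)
The plan is to verify directly that the two operations $\leftrightarrow$ and $\updownarrow$ on $\mathcal Paths(\mathcal C,N)$ satisfy the five axioms of a colored PRO, with color set $\mathbf C=[N]$ and grading given by the pair of label vectors $(\mathtt{Out}(\mathfrak q),\mathtt{In}(\mathfrak q))$. First I would check that the operations respect this grading: juxtaposition catenates the output (resp. input) label vectors, so it maps $\mathcal Paths(\mathcal C,N)_{I,J}\times\mathcal Paths(\mathcal C,N)_{I',J'}$ into the component indexed by $(II',JJ')$; connection is only defined when $\mathtt{In}(\mathfrak p)=\mathtt{Out}(\mathfrak q)$, i.e. the input vector of $\mathfrak p$ and the output vector of $\mathfrak q$ are a common vector $J$, and it then produces a path in the component $\mathcal Paths(\mathcal C,N)_{I,K}$ with $I=\mathtt{Out}(\mathfrak p)$ and $K=\mathtt{In}(\mathfrak q)$. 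This matches the required signatures $\leftrightarrow:\mathcal P_{I,J}\times\mathcal P_{I',J'}\to\mathcal P_{II',JJ'}$ and $\updownarrow:\mathcal P_{I,J}\times\mathcal P_{J,K}\to\mathcal P_{I,K}$. The argument then proceeds by structural induction on the recursive definition of paths, mirroring the proof of Proposition \ref{propCirc} for $\mathcal Circ(\mathcal C)$, the only novelty being the bookkeeping of the labels carried by the connecting wires.

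The routine axioms I would dispatch first. Horizontal associativity follows from associativity of catenation of label vectors together with the absorbing rule $\oslash\mathfrak p=\mathfrak p\oslash=\mathfrak p$, which also identifies the horizontal unit as $\varepsilon=\oslash\in\mathcal Paths(\mathcal C,N)_{[],[]}$. Vertical associativity is proved by induction: connection is a positional matching along an interface list of labelled wires, and since the two interior label vectors are forced to agree, reassociating the connections leaves the resulting wiring unchanged. For the graded vertical unit I would take $Id_I=Id_{c_1}\leftrightarrow\cdots\leftrightarrow Id_{c_n}$ for $I=[c_1,\dots,c_n]$, the juxtaposition of the labelled wires; the rule that connecting a path to a list of labelled wires leaves the path unchanged gives $Id_I$-neutrality on both sides, and the required factorization $Id_I=Id_{c_1}\leftrightarrow\cdots\leftrightarrow Id_{c_n}$ holds by construction.

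The interchange law is the crux and the step I expect to be the main obstacle. Given $\mathfrak p,\mathfrak q,\mathfrak p',\mathfrak q'$ with $\mathtt{In}(\mathfrak p)=\mathtt{Out}(\mathfrak q)=J$ and $\mathtt{In}(\mathfrak p')=\mathtt{Out}(\mathfrak q')=J'$, I would show that both sides of
\[
\left(\begin{array}{c}\mathfrak p\\\updownarrow\\\mathfrak q\end{array}\right)\leftrightarrow\left(\begin{array}{c}\mathfrak p'\\\updownarrow\\\mathfrak q'\end{array}\right)=\begin{array}{c}(\mathfrak p\leftrightarrow\mathfrak p')\\\updownarrow\\(\mathfrak q\leftrightarrow\mathfrak q')\end{array}
\]
describe the same labelled wiring. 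On the left the connecting wires are those matching the interface $J$ of $(\mathfrak p,\mathfrak q)$ and, separately, those matching the interface $J'$ of $(\mathfrak p',\mathfrak q')$; on the right a single connection matches the catenated interface $JJ'$ of $\mathfrak p\leftrightarrow\mathfrak p'$ and $\mathfrak q\leftrightarrow\mathfrak q'$. The point to verify carefully is that positional matching along $JJ'$ coincides, block by block, with positional matching along $J$ followed by positional matching along $J'$, and that each connecting wire carries the same color in both readings; this amounts to catenation commuting with positional pairing, which I would again establish by induction on the structure. This is the precise analogue of the interchange verification in the uncolored circuit case and is the only genuinely combinatorial content of the proof. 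Once it is in place, all five axioms hold and $\mathcal Paths(\mathcal C,N)$ is a colored PRO.
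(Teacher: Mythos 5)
Your proposal is correct and follows the same route as the paper: the paper simply states the proposition as holding ``straightforwardly from the definition,'' and your axiom-by-axiom verification (grading, associativities, units, and the interchange law via block-by-block positional matching along the catenated interface $JJ'$) is exactly that straightforward check written out in full.
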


We define $u:\mathcal Paths(\mathcal C,N)\longrightarrow\mathcal F(\mathcal C)$ the unlabeling map that associate to each paths $\mathfrak p$ the circuit $u(\mathfrak p)$ obtained by removing the labels from the inputs, the outputs and the connections of $\mathfrak p$ (see \ref{fp2c} for an example).
\begin{figure}[h]
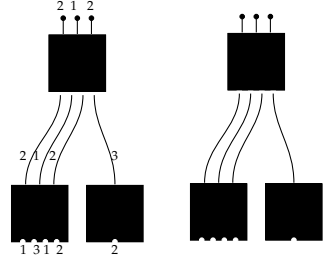

\begin{center}
$\tikdanseq{
\pro 00{0.75}{0.75}34{}{a}
\node(n1) at (0.4,-0.1) {\tiny 1\ \ 3\ 1\ \ 2};
\node(n1) at (0.35,1.15) {\tiny 2\ \ 1\ \ \ 2};
\pro 10{0.75}{0.75}11{}{b}
\node(n1) at (1+0.75/2,-0.1){\tiny 2};
\node(n1) at (1+0.75/2,1.15){\tiny 3};
\plug 00{0.75}{0.75}4{}
\plug 10{0.75}{0.75}1{}
\chip {0.5}2{0.75}{0.75}34{}{c}
\pro {0.5}2{0.75}{0.75}34{}{c}
\node(n1) at (0.5+0.35,3.15) {\tiny 2\ \ 1\ \ \ 2};
\draw (uua1) to [out=90,in=270] (ddc1);
\draw (uua2) to [out=90,in=270] (ddc2);
\draw (uua3) to [out=90,in=270] (ddc3);
\draw (uub1) to [out=90,in=270] (ddc4);
}
$\ \ \ 
$\tikdanseq{
\pro 00{0.75}{0.75}34{}{a}
\pro 10{0.75}{0.75}11{}{b}
\plug 00{0.75}{0.75}4{}
\plug 10{0.75}{0.75}1{}
\chip {0.5}2{0.75}{0.75}34{}{c}
\pro {0.5}2{0.75}{0.75}34{}{c}
\draw (uua1) to [out=90,in=270] (ddc1);
\draw (uua2) to [out=90,in=270] (ddc2);
\draw (uua3) to [out=90,in=270] (ddc3);
\draw (uub1) to [out=90,in=270] (ddc4);
}
$
\end{center}
\caption{A path and its associated circuit.\label{fp2c}}
\end{figure}

The following result are easy to obtain:
\begin{lemma}\label{Path2Circ}
\begin{enumerate}
\item 
The sets
\[
\{\mathfrak q_1\leftrightarrow \mathfrak q_2:u(\mathfrak q_1)=\mathfrak p_1, u(\mathfrak q_2)=\mathfrak p_2, \mathtt{In}(\mathfrak q_1)=I_1,
\mathtt{In}(\mathfrak q_2)=I_2,  \mathtt{Out}(\mathfrak q_1)=J_1, \mbox{ and } 
\mathtt{Out}(\mathfrak q_2)=J_2 \} 
\]
and
\[
\{\mathfrak q:u(\mathfrak q)=\mathfrak p_1\leftrightarrow\mathfrak p_2, \mathtt{Out}(\mathfrak q)=I_1I_2,\mbox{ and }
\mathtt{In}(\mathfrak q)=J_1J_2\}
\]
are equal.
\item The sets
\[
\left\{\begin{array}{c}\mathfrak q'\\\updownarrow\\ \mathfrak q''\end{array}:u(\mathfrak q')=\mathfrak p', u(\mathfrak q'')=\mathfrak p'', \mathtt{In}(\mathfrak q'')=J,
\mathtt{In}(\mathfrak q')=\mathtt{Out}(\mathfrak q''),  \mathtt{Out}(\mathfrak q')=I\right\} 
\]
and
\[
\left\{
\mathfrak q:u(\mathfrak q)=\begin{array}{c}\mathfrak p'\\\updownarrow\\ \mathfrak p''\end{array}, \mathtt{Out}(\mathfrak q)=I, \mbox{ and }\mathtt{In}(\mathfrak q)=J
\right\}
\]
are equal.
\end{enumerate}
\end{lemma}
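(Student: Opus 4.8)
The plan is to treat both equalities as fiberwise refinements of a single observation: the unlabeling map $u$ is compatible with the two compositions. Removing the labels from inputs, outputs and connections commutes with placing two paths side by side and with gluing matching wires, so reading this off the recursive definitions of $\leftrightarrow$ and $\updownarrow$ on $\mathcal Paths(\mathcal C,N)$ yields $u(\mathfrak q_1\leftrightarrow\mathfrak q_2)=u(\mathfrak q_1)\leftrightarrow u(\mathfrak q_2)$ and $u\left(\begin{array}{c}\mathfrak q'\\\updownarrow\\\mathfrak q''\end{array}\right)=\begin{array}{c}u(\mathfrak q')\\\updownarrow\\u(\mathfrak q'')\end{array}$. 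Granting this, the inclusion of the left-hand set into the right-hand set is immediate in both parts: if $\mathfrak q=\mathfrak q_1\leftrightarrow\mathfrak q_2$ with $u(\mathfrak q_i)=\mathfrak p_i$, then $u(\mathfrak q)=\mathfrak p_1\leftrightarrow\mathfrak p_2$, and the definition of horizontal composition in a colored PRO catenates the input and the output color vectors, so the prescribed colors are exactly those listed on the right; the vertical case is identical, the matching $\mathtt{In}(\mathfrak q')=\mathtt{Out}(\mathfrak q'')$ being precisely what makes the composite defined.

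For the reverse inclusions I would exploit the recursive structure of paths. The guiding picture is that a path is nothing but its underlying circuit $u(\mathfrak q)$ decorated with a coloring in $[N]$ of each of its strands (the external inputs, the external outputs, and every internal connecting wire), and that $u$ simply forgets this decoration. In Part 1 we are given a path $\mathfrak q$ with $u(\mathfrak q)=\mathfrak p_1\leftrightarrow\mathfrak p_2$. The chosen horizontal decomposition of the circuit splits its strands into those lying over $\mathfrak p_1$ and those over $\mathfrak p_2$; restricting the coloring of $\mathfrak q$ to each block produces paths $\mathfrak q_1,\mathfrak q_2$ with $u(\mathfrak q_i)=\mathfrak p_i$, whose juxtaposition is $\mathfrak q$ and whose color vectors are the two halves of those of $\mathfrak q$. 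This exhibits $\mathfrak q$ in the left-hand set. Formally this is a short induction on the construction of $\mathfrak q$, the base cases ($\oslash$, a labelled wire, a single labelled chip) being trivial and the inductive step following because $\leftrightarrow$ distributes over the decomposition.

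The main obstacle is the reverse inclusion of Part 2, since the horizontal cut now runs through the connecting wires between top and bottom, and those wires carry labels. Given $\mathfrak q$ with $u(\mathfrak q)=\begin{array}{c}\mathfrak p'\\\updownarrow\\\mathfrak p''\end{array}$, I would separate the strands of $\mathfrak q$ above the cut from those below it, obtaining a candidate top path $\mathfrak q'$ and bottom path $\mathfrak q''$ with $u(\mathfrak q')=\mathfrak p'$ and $u(\mathfrak q'')=\mathfrak p''$. Two checks form the heart of the argument. First, the labels on the cut wires are well defined from $\mathfrak q$: since in $\mathfrak q$ each such wire joins an input of the upper part to the output of the lower part by a single labelled wire, the induced colorings automatically satisfy $\mathtt{In}(\mathfrak q')=\mathtt{Out}(\mathfrak q'')$, which is exactly the compatibility listed on the left. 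Second, that same matching condition is what legitimizes the connection $\begin{array}{c}\mathfrak q'\\\updownarrow\\\mathfrak q''\end{array}$ and guarantees that reassembling recovers $\mathfrak q$ and not some other path. Once both directions are verified, $\mathfrak q$ lies in the left-hand set and the equality follows.

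I would close by observing that both statements merely package the assertion that the coloring data of a path is a free decoration of the edges of its underlying circuit, so that the fiber of $u$ over a composite circuit is canonically the color-matched set of composites of fibers; this is why the lemma is easy, the only genuine work being the bookkeeping of labels across the vertical cut.
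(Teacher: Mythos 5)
The paper gives no proof of this lemma at all --- it is introduced with the remark that ``the following result are easy to obtain'' --- so there is no argument of the authors' to compare yours against; your proposal supplies the verification the paper omits, and it is correct. The two ingredients you isolate are exactly the right ones: the forward inclusions follow once one checks that the unlabeling map $u$ commutes with $\leftrightarrow$ and $\updownarrow$ (read off the recursive definitions), and the reverse inclusions follow from the structural fact that a path is its underlying circuit together with a free choice of label in $[N]$ on every strand, so that any decomposition of $u(\mathfrak q)$ lifts strand-by-strand to a decomposition of $\mathfrak q$; your identification of the only delicate point --- that in the vertical case the labels on the cut wires are read off unambiguously from $\mathfrak q$ and automatically force $\mathtt{In}(\mathfrak q')=\mathtt{Out}(\mathfrak q'')$ --- is precisely where the content lies. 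One place to tighten: you phrase the reverse inclusion as ``a short induction on the construction of $\mathfrak q$,'' but the given splitting of $u(\mathfrak q)$ need not match the construction tree of $\mathfrak q$ (the interchange law identifies different construction orders), so the induction hypothesis does not directly apply when, say, the top-level operation building $\mathfrak q$ is horizontal while the prescribed splitting of $u(\mathfrak q)$ is vertical; the clean fix is to argue directly from your ``free decoration'' description of the fiber of $u$ (or to induct on the circuit decomposition, carrying the decoration along), which makes the lemma independent of how $\mathfrak q$ was built. This is the same subtlety the paper silently skips. Note finally that the paper's statement of Part 1 swaps the roles of $\mathtt{In}$ and $\mathtt{Out}$ between the two sets (presumably a typo), which your write-up silently normalizes.
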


\section{A PRO structure on hypermatrices }
\subsection{Hypermatrices\label{PROHyp}}

An hypermatrix is a family of elements indexed by a list of integers 
$i_1,\dots,i_k$ where $i_j\in[N_j]:=\{1,\dots,N_j\}$. The hypermatrices here 
are such that all the $N_j$ are the same. Furthermore we split the list of the indices into two parts in the aim to coincide with the structure of PRO.\\
 \begin{definition}
 Let $\mathbb K$ be a commutative semiring and $N,p,q\in\mathbb N$, $N>0$. We define 
 $$\mathbb K(N,m, n):=\left\{\left(\hmat a\TI\TJ\right)_{\TI\in[N]^m, \TJ\in [N]^n}:\hmat a\TI\TJ\in\mathbb K\right\}$$ 
 with the special case $\mathbb K(N,0,0):=\mathbb K$. 
 We endow $\mathbb K(N):=\bigcup_{m,n}\mathcal M(N,m,n)$ with  the three operators:
\begin{itemize}
\item We define $\leftrightarrow:\mathbb K(N,m,n)\times \mathbb K(N,m',n')\rightarrow \mathbb K(N,m+m',n+n')$ as follows. 
Consider  $\mathbf A=\left(\hmat a\TI\TJ\right)_{\TI\in[N]^m\atop \TJ\in[N]^n}$ and $\mathbf B=\left(\hmat b\TI\TJ\right)_{\TI\in[N]^{m'}
\atop \TJ\in[N]^{'}}$. We set $\mathbf A\leftrightarrow \mathbf B=\mathbf C$ with 
$\mathbf C=\left(\hmat c\TI\TJ\right)_{\TI\in[N]^{m+m'}\atop \TJ\in[N]^{n+n'}}$ and
$\hmat c\TI\TJ=\hmat a{\TI[1\dots m]}{\TJ[1\dots n]}\hmat b{\TI[m+1\dots m+m']}{\TJ[n+1\dots n+n']}$
where $\TI[a\dots b]$ denotes the sequence $[i_a,\dots,i_b]$ for $\TI=[i_1,\dots,i_s]$ and $1\leq a\leq b\leq s$.
\item We define $\updownarrow:\mathbb K(N,m,n)\times \mathbb K(N,n,p)\rightarrow \mathbb K(N,m,p)$ as follows.
 Consider  $\mathbf A=\left(\hmat a\TI\TJ\right)_{\TI\in[N]^m\atop \TJ\in[N]^n}$ and 
 $\mathbf B=\left(\hmat b\TI\TJ\right)_{\TI\in[N]^{n}\atop \TJ\in[N]^{p}}$.
 We set $\begin{array}{c}\mathbf A\\\updownarrow\\\mathbf B\end{array}=
 \mathbf C$ with $\mathbf C=\left(\hmat c\TI\TJ\right)_{\TI\in[N]^m\atop \TJ\in[N]^p}$ and  
$\hmat c\TI\TJ=\sum_{\TK\in [N]^n}\hmat a\TI\TK\hmat b\TK{\TJ}.$
\item Also each $\mathbb K(N,m,n)$ is naturally endowed with a structure of $\mathbb K$-module.
\end{itemize}
\end{definition}
For the sake of simplicity, when computing, we  write an hypermatrix of $\mathbb K(N,m,n)$ as a rectangular $N^m\times N^n$-matrix.
 With this notation, the operation $\updownarrow$ is assimilated with the classical product of matrices and $\leftrightarrow$ with the Kronecker product.

\begin{example}\rm
Let $\mathbf A=\left(\hmat a\TI\TJ\right)_{\TI\in[2]^2,\TJ\in[2]^2}$ and $\mathbf B=\left(\hmat b\TI\TJ\right)_{\TI\in[2]^2,\TJ\in[2]^2}$.
 These matrices have the following $2$-dimensional representation
\[
\mathbf A=\left[
\begin{array}{c|c|c|c}
\hmat a{11}{11}&\hmat a{11}{12}&\hmat a{11}{21}&\hmat a{11}{22}\\\hline
\hmat a{12}{11}&\hmat a{12}{12}&\hmat a{12}{21}&\hmat a{12}{22}\\\hline
\hmat a{21}{11}&\hmat a{21}{12}&\hmat a{21}{21}&\hmat a{21}{22}\\\hline
\hmat a{22}{11}&\hmat a{22}{12}&\hmat a{22}{21}&\hmat a{22}{22}
\end{array}
\right]
\mbox{ and }
\mathbf B=\left[
\begin{array}{c|c|c|c}
\hmat b{11}{11}&\hmat b{11}{12}&\hmat b{11}{21}&\hmat b{11}{22}\\\hline
\hmat b{12}{11}&\hmat b{12}{12}&\hmat b{12}{21}&\hmat b{12}{22}\\\hline
\hmat b{21}{11}&\hmat b{21}{12}&\hmat b{21}{21}&\hmat b{21}{22}\\\hline
\hmat b{22}{11}&\hmat b{22}{12}&\hmat b{22}{21}&\hmat b{22}{22}
\end{array}
\right].
\]
We have
\[
\begin{array}{c}\mathbf A\\\updownarrow\\\mathbf  B\end{array}=\left[
\begin{array}{c|c|c}
\hmat a{11}{11}\hmat b{11}{11}+
\hmat a{11}{12}\hmat b{12}{11}+\hmat a{11}{21}\hmat b{21}{11}+\hmat a{11}{22}\hmat b{22}{11}
&
\cdots
&
\hmat a{11}{11}\hmat b{11}{22}+
\hmat a{11}{12}\hmat b{12}{22}+\hmat a{11}{21}\hmat b{21}{22}+\hmat a{11}{22}\hmat b{22}{22}
\\\hline
\vdots&\ddots
&
\vdots
\\\hline
\hmat a{22}{11}\hmat b{11}{11}+ \hmat a{22}{12}\hmat b{12}{11}+\hmat a{22}{21}\hmat b{21}{11}+
\hmat a{22}{22}\hmat b{22}{11}
&
\cdots&
\hmat a{22}{11}\hmat b{11}{22}+
\hmat a{22}{12}\hmat b{12}{22}+
\hmat a{22}{21}\hmat b{21}{22}+\hmat a{22}{22}\hmat b{22}{22}\end{array}
\right]
\]
and
\[
\mathbf A\leftrightarrow\mathbf  B=\left[
\begin{array}{c|c|c|ccc|c|c|c}
\hmat a{11}{11}  \hmat b{11}{11}&
\hmat a{11}{11}  \hmat b{11}{12}&
\hmat a{11}{11}  \hmat b{11}{21}&
\hmat a{11}{11}  \hmat b{11}{22}&\cdots&
\hmat a{11}{22}  \hmat b{11}{11}&
\hmat a{11}{22}  \hmat b{11}{12}&
\hmat a{11}{22}  \hmat b{11}{21}&
\hmat a{11}{22}  \hmat b{11}{22}\\\hline
\hmat a{11}{11}  \hmat b{12}{11}&
\hmat a{11}{11}  \hmat b{12}{12}&
\hmat a{11}{11}  \hmat b{12}{21}&
\hmat a{11}{11}  \hmat b{12}{22}&\cdots&
\hmat a{11}{22}  \hmat b{12}{11}&
\hmat a{11}{22}  \hmat b{12}{12}&
\hmat a{11}{22}  \hmat b{12}{21}&
\hmat a{11}{22}  \hmat b{12}{22}\\\hline
\hmat a{11}{11}  \hmat b{21}{11}&
\hmat a{11}{11}  \hmat b{21}{12}&
\hmat a{11}{11}  \hmat b{21}{21}&
\hmat a{11}{11}  \hmat b{21}{22}&\cdots&
\hmat a{11}{22}  \hmat b{21}{11}&
\hmat a{11}{22}  \hmat b{21}{12}&
\hmat a{11}{22}  \hmat b{21}{21}&
\hmat a{11}{22}  \hmat b{21}{22}\\\hline
\hmat a{11}{11}  \hmat b{22}{11}&
\hmat a{11}{11}  \hmat b{22}{12}&
\hmat a{11}{11}  \hmat b{22}{21}&
\hmat a{11}{11}  \hmat b{22}{22}&\cdots&
\hmat a{11}{22}  \hmat b{22}{11}&
\hmat a{11}{22}  \hmat b{22}{12}&
\hmat a{11}{22}  \hmat b{22}{21}&
\hmat a{11}{22}  \hmat b{22}{22}\\
\multicolumn{4}{c}{\vdots}&\ddots&\multicolumn{4}{c}{\vdots}\\

\hmat a{22}{11}  \hmat b{11}{11}&
\hmat a{22}{11}  \hmat b{12}{11}&
\hmat a{22}{11}  \hmat b{21}{11}&
\hmat a{22}{11}  \hmat b{22}{11}&\cdots&
\hmat a{22}{22}  \hmat b{11}{11}&
\hmat a{22}{22}  \hmat b{12}{11}&
\hmat a{22}{22}  \hmat b{21}{11}&
\hmat a{22}{22}  \hmat b{22}{11}\\\hline
\hmat a{22}{11}  \hmat b{11}{12}&
\hmat a{22}{11}  \hmat b{12}{12}&
\hmat a{22}{11}  \hmat b{21}{12}&
\hmat a{22}{11}  \hmat b{22}{12}&\cdots&
\hmat a{22}{22}  \hmat b{11}{12}&
\hmat a{22}{22}  \hmat b{12}{12}&
\hmat a{22}{22}  \hmat b{21}{12}&
\hmat a{22}{22}  \hmat b{22}{12}\\\hline
\hmat a{22}{11}  \hmat b{11}{21}&
\hmat a{22}{11}  \hmat b{12}{21}&
\hmat a{22}{11}  \hmat b{21}{21}&
\hmat a{22}{11}  \hmat b{22}{21}&\cdots&
\hmat a{22}{22}  \hmat b{11}{21}&
\hmat a{22}{22}  \hmat b{12}{21}&
\hmat a{22}{22}  \hmat b{21}{21}&
\hmat a{22}{22}  \hmat b{22}{21}\\\hline
\hmat a{22}{11}  \hmat b{11}{22}&
\hmat a{22}{11}  \hmat b{12}{22}&
\hmat a{22}{11}  \hmat b{21}{22}&
\hmat a{22}{11}  \hmat b{22}{22}&\cdots&
\hmat a{22}{22}  \hmat b{11}{22}&
\hmat a{22}{22}  \hmat b{12}{22}&
\hmat a{22}{22}  \hmat b{21}{22}&
\hmat a{22}{22}  \hmat b{22}{22}
\end{array}
\right]
\]
\end{example}

\begin{proposition}\label{isModPro}
$\mathbb K(N)$ is a $\mathbb K$-ModPro.
\end{proposition}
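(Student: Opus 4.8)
The plan is to exploit the identification, noted just before the statement, of each hypermatrix of $\mathbb{K}(N,m,n)$ with an ordinary $N^m\times N^n$ matrix over $\mathbb{K}$, under which $\updownarrow$ becomes the usual matrix product and $\leftrightarrow$ becomes the Kronecker product. Concretely, I would fix the lexicographic ordering on each $[N]^k$, giving a bijection between $[N]^k$ and $\{1,\dots,N^k\}$, and use it to linearize the row multi-index $\TI$ and the column multi-index $\TJ$. The first task is to check that this dictionary is correct: the defining formula $\hmat c\TI\TJ=\sum_{\TK\in[N]^n}\hmat a\TI\TK\hmat b\TK\TJ$ is literally the $(\TI,\TJ)$ entry of a matrix product $AB$, and the formula $\hmat c\TI\TJ=\hmat a{\TI[1\dots m]}{\TJ[1\dots n]}\hmat b{\TI[m+1\dots m+m']}{\TJ[n+1\dots n+n']}$ is exactly the entry of the Kronecker product $A\otimes B$, once the lexicographic splitting of a large multi-index into a leading block index and a trailing sub-index is matched with the block structure of $\otimes$ on both the row and the column sides.

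Once the dictionary is in place, each PRO axiom becomes a standard matrix identity valid over any commutative semiring. Horizontal associativity (\ref{hass}) is the associativity of the Kronecker product; vertical associativity (\ref{vass}) is the associativity of the matrix product; and the interchange law (\ref{mass}) is precisely the mixed-product property $(P\otimes P')(Q\otimes Q')=(PQ)\otimes(P'Q')$, read with $\mathfrak p,\mathfrak p',\mathfrak q,\mathfrak q'$ playing the roles of $P,P',Q,Q'$. For the units, the graded vertical unit $Id_n$ corresponds to the identity matrix $I_{N^n}$, so (\ref{grneut}) is $I_{N^m}A=A=AI_{N^n}$; since $Id_1=I_N$ and $I_N^{\otimes n}=I_{N^n}$, relation (\ref{grneut3}) holds, and $Id_0=I_1=1\in\mathbb{K}=\mathbb{K}(N,0,0)$. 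The horizontal unit (\ref{hneut}) is then realized by $\varepsilon=1\in\mathbb{K}(N,0,0)$, since the Kronecker product of the $1\times 1$ matrix $(1)$ with any $A$ returns $A$.

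It remains to verify the $\mathbb{K}$-module axioms. Each $\mathbb{K}(N,m,n)$ is $\mathbb{K}^{N^m\times N^n}$ with entrywise sum and scalar action, hence a $\mathbb{K}$-module. The left and right distributivities (\ref{lrdist}) are the bilinearity of the Kronecker product, the up and down distributivities (\ref{uddist}) are the bilinearity of the matrix product, and the two scalar-compatibility rules follow from $a(P\otimes Q)=(aP)\otimes Q=P\otimes(aQ)$ and $a(PQ)=(aP)Q=P(aQ)$.

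I expect the only points requiring genuine care, rather than mechanical verification, to be two. First, establishing the Kronecker dictionary cleanly: one must confirm that the lexicographic concatenation $[N]^m\times[N]^{m'}\to[N]^{m+m'}$ is compatible with the block pattern of $\otimes$ on both the row and the column indices, so that the interchange law and the mixed-product property match index-for-index. Second, checking that every scalar manipulation—in particular the regrouping of the double sum in the mixed-product property, and the step $P(aQ)=a(PQ)$—uses only commutativity and distributivity in $\mathbb{K}$, never subtraction or inverses; this is exactly why the hypothesis that $\mathbb{K}$ is a commutative semiring suffices.
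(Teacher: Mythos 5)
Your proposal is correct and follows essentially the same route as the paper's own (much terser) proof: identify $\updownarrow$ with the matrix product and $\leftrightarrow$ with the Kronecker product under the $N^m\times N^n$ matrix picture, take $\mathbf I_1(N)$ and $\mathbf I_0(N)=1$ as the units, and derive the distributivity axioms from the bilinearity of these two products over a commutative semiring. Your added care about the lexicographic index dictionary and the avoidance of subtraction is a worthwhile elaboration of what the paper leaves as "easy to check," but it is the same argument.
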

\begin{proof}
It is easy to check that $\mathbb K(N)$ is a PRO where the graded $(1,1)$-vertical unity, 
$\mathbf I_{1}(N)$, is the $N\times N$ identity matrix and the horizontal unity is 
$\mathbf I_{0}(N)=1$. The left, right, up and down distributivity follows from the bilinearity of the product and Kronecker product of matrices.
\end{proof}
We introduce for $K\in [N]^p, L\in [N]^q$ the hypermatrices
 $E(N,p,q;K,L)=\left(\delta_{IK}\delta_{JL}\right)_{I\in [N]^p,J\in[N]^q}$
  where $\delta_{IK}=0$ if $I\neq K$ and $1$ otherwise is the Kronecker delta.. 
  As in the case of matrices, these elements play the role of basic elements allowing to decompose the hypermatrices.
\begin{claim}\label{M2E}
For each $\mathbf A=\left(\hmat aIJ\right)$, we have
\[
\mathbf A=\mathop\bigboxplus_{I,J}\hmat aIJ.E(N,p,q;I,J).
\]
Furthermore this decomposition is unique.
\end{claim}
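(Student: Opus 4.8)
The plan is to reduce everything to the entry-wise description of the $\mathbb K$-module structure on $\mathbb K(N,p,q)$. First I would recall that, by the definition in Section \ref{PROHyp}, this module structure is coordinate-wise: the sum $\bigboxplus$ adds hypermatrices entry by entry, and the scalar action (written with a dot) multiplies every entry by the given element of $\mathbb K$. Consequently, two elements of $\mathbb K(N,p,q)$ are equal if and only if they have the same $(I,J)$-entry for every $I\in[N]^p$ and $J\in[N]^q$, so it suffices to compare the two sides of the claimed identity entry by entry.

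Next I would compute the $(I,J)$-entry of the right-hand side. By the definition of $E(N,p,q;K,L)=\left(\delta_{IK}\delta_{JL}\right)_{I,J}$, its $(I,J)$-entry is $\delta_{IK}\delta_{JL}$, so the $(I,J)$-entry of $\hmat aKL.E(N,p,q;K,L)$ is $\hmat aKL\,\delta_{IK}\delta_{JL}$. Summing over all pairs $(K,L)$ and using that $\delta_{IK}\delta_{JL}$ vanishes unless $(K,L)=(I,J)$, the only surviving term is $\hmat aIJ$. Hence the $(I,J)$-entry of $\mathop\bigboxplus_{K,L}\hmat aKL.E(N,p,q;K,L)$ equals $\hmat aIJ$, which is exactly the $(I,J)$-entry of $\mathbf A$; since this holds for all $I,J$, the two hypermatrices coincide.

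For uniqueness I would assume $\mathbf A=\mathop\bigboxplus_{K,L}\lambda_{K,L}.E(N,p,q;K,L)$ for some scalars $\lambda_{K,L}\in\mathbb K$ and read off the $(I,J)$-entry by the same computation, obtaining $\hmat aIJ=\lambda_{I,J}$; thus the coefficients are forced to be the entries of $\mathbf A$. There is no genuine obstacle here, only bookkeeping, but two points deserve care: one must keep the module operations $\bigboxplus$ and the dot distinct from the PRO compositions $\leftrightarrow$ and $\updownarrow$, and the uniqueness argument must extract each coefficient \emph{directly} rather than by cancellation, since $\mathbb K$ is only a commutative semiring and subtraction is unavailable. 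The direct entry-reading argument above sidesteps this, so no stronger hypothesis on $\mathbb K$ is needed.
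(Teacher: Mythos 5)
Your proof is correct: the paper states Claim \ref{M2E} without proof (treating it as immediate, ``as in the case of matrices''), and your entry-wise verification is precisely the standard argument that fills this in. Your remark that uniqueness must be read off directly from the $(I,J)$-entries rather than obtained by cancellation is well taken, since $\mathbb K$ is only a semiring and subtraction is unavailable.
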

Furthermore, we have for any $I\in[N]^{p}$, $J\in[N]^{q}$,
$I'\in[N]^{p'}$, $J'\in[N]^{q'}$
\begin{equation}\label{EHE}
	\mathbf E(N,p,q;I,J)\leftrightarrow \mathbf E(N,p',q';I',J')=
	\mathbf E(N,p+p',q+q';II',
	JJ'),
\end{equation}
where $[i_{1},\dots,i_{k}][j_{1},\dots,j_{\ell}]=[i_{1},\dots,i_{k},j_{1},\dots,j_{\ell}]$ denotes the catenation of the sequences,
and for any $I\in[N]^{p}$, $J,J'\in[N]^{q}$, and $K\in [N^{r}]$,
\begin{equation}\label{EVE}
	\begin{array}{c}\mathbf E(N,p,q;I,J)\\\updownarrow\\\mathbf E(N,p,r;J',K)\end{array}=\delta_{J,J'}\mathbf E(N,p,r;I,K),
\end{equation}
where $\delta_{J,J'}=1$ if $J=J'$ and $0$ otherwise.
\subsection{Kronecker product\label{Kronecker}}
If $I=[i_1,\dots,i_p]\in (\N-\{0\})^p$ we
 will write $I\% N=[r_1,\dots,r_p]$ and $I/N=[q_1,\dots,q_p]$ where $i_k=(q_k-1)N+r_k$ and $1\leq r_{k-1}\leq N$.\\
The \emph{Kronecker product} $\mathbf A\odot\mathbf B$ of two hypermatrices
 $\mathbf A=\left(\hmat aIJ\right)\in\mathbb K(M,p,q),{}
  \mathbf B=\left(\hmat bIJ\right)\in\mathbb K(N,p,q)$ is the hypermatrix $\mathbf C=\left(\hmat cIJ\right)\in\mathbb K(MN,p,q)$ 
  defined by
\begin{equation}\label{defdot}
\hmat c IJ=\hmat a{I\% M}{J\%M}\hmat b{I/ M}{J/M}.
\end{equation}
For instance we have the following identity
\begin{equation}\label{dot2E}
\begin{array}{l}
\mathbf E(M,p,q;[r_1,\dots,r_p],[s_1,\dots,s_q])\odot
\mathbf E(N,p,q;[k_1,\dots,k_p],[\ell_1,\dots,\ell_q])=\\
\mathbf E(MN,p,q;[(k_1-1)M+r_1,\dots,(k_p-1)M+r_p],
[(\ell_1-1)M+s_1,\dots,(\ell_q-1)M+s_q)].\end{array}
\end{equation}
Or equivalently,
\begin{equation}\label{E2dot}
\mathbf E(MN,p,q;I,J)=\mathbf E(M,p,q;I\%M,J\%M)\odot \mathbf E(N,p,q;I/N,J/N)
\end{equation} 
with the same notation as in Equation (\ref{defdot}).\\
Let us examine the first properties of this construction.
\begin{lemma}\label{distMat}
We have 
$(\mathbf A+ \mathbf B)\odot \mathbf C=(\mathbf A\odot \mathbf C)+(\mathbf B\odot \mathbf C)$ and  
$\mathbf A\odot (\mathbf B+ \mathbf C)=(\mathbf A\odot \mathbf B)+(\mathbf A\odot \mathbf C)$.
\end{lemma}
\begin{proof}
We have
$
(\mathbf A+ \mathbf B)\odot \mathbf C=\left(\hmat gIJ\right)
$
where $\hmat gIJ=\left(\hmat a{I\%M}{J\%M}+ 
\hmat b{I\%M}{J\%M}\right)\hmat c{I/M}{I/M}=
\hmat a{I\%M}{J\%M}\hmat c{I/M}{I/M}+
\hmat b{I\%M}{J\%N}\hmat c{I/M}{I/M}.
$ 
This implies $(\mathbf A+ \mathbf B)\odot \mathbf C=(\mathbf A\odot \mathbf C)+(\mathbf B\odot \mathbf C)$. 
The second identities is proved in a similar way.
\end{proof}
\begin{lemma}\label{rMat}
We have 
$\mathfrak r.(\mathbf A\odot \mathbf B)=(\mathfrak r.\mathbf A)\odot \mathbf B=\mathbf A\odot (\mathfrak r.\mathbf B)$\\
\end{lemma}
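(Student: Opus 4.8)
The plan is to proceed exactly as in the proof of Lemma \ref{distMat}, namely by comparing the three hypermatrices entry by entry. Since $\mathbf A\in\mathbb K(M,p,q)$ and $\mathbf B\in\mathbb K(N,p,q)$, all three expressions $\mathfrak r.(\mathbf A\odot \mathbf B)$, $(\mathfrak r.\mathbf A)\odot \mathbf B$ and $\mathbf A\odot (\mathfrak r.\mathbf B)$ lie in $\mathbb K(MN,p,q)$, so it suffices to check that their $(I,J)$-entries agree for every $I\in[MN]^p$ and $J\in[MN]^q$.

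First I would recall that scalar multiplication in the $\mathbb K$-module $\mathbb K(MN,p,q)$ is performed entrywise, so the $(I,J)$-entry of $\mathfrak r.\mathbf C$ is $\mathfrak r\,\hmat cIJ$ for any $\mathbf C=\left(\hmat cIJ\right)$. Combining this with the defining formula (\ref{defdot}) of $\odot$, the $(I,J)$-entry of $\mathfrak r.(\mathbf A\odot \mathbf B)$ equals
\[
\mathfrak r\,\hmat a{I\%M}{J\%M}\hmat b{I/M}{J/M}.
\]
For $(\mathfrak r.\mathbf A)\odot \mathbf B$, the entry of $\mathfrak r.\mathbf A$ indexed by $(I\%M,J\%M)$ is $\mathfrak r\,\hmat a{I\%M}{J\%M}$, so (\ref{defdot}) gives the entry $\left(\mathfrak r\,\hmat a{I\%M}{J\%M}\right)\hmat b{I/M}{J/M}$; similarly, the $(I,J)$-entry of $\mathbf A\odot (\mathfrak r.\mathbf B)$ is $\hmat a{I\%M}{J\%M}\left(\mathfrak r\,\hmat b{I/M}{J/M}\right)$.

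The only thing left is to observe that these three scalars coincide, which is immediate from the associativity and commutativity of the multiplication of $\mathbb K$ (recall that $\mathbb K$ is a commutative semiring): each of them equals the single product $\mathfrak r\,\hmat a{I\%M}{J\%M}\hmat b{I/M}{J/M}$. There is no genuine obstacle here; as with Lemma \ref{distMat}, the statement is a direct entrywise consequence of the definition of $\odot$ together with the semiring axioms, the only point worth flagging being that commutativity of $\mathbb K$ is precisely what allows the scalar $\mathfrak r$ to be moved freely across either of the two factors.
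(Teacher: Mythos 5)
Your proposal is correct and takes essentially the same route as the paper: the paper's proof consists of the single line ``This is straightforward from the definition,'' and your entrywise verification---combining the entrywise scalar action on $\mathbb K(MN,p,q)$, the defining formula (\ref{defdot}) of $\odot$, and associativity/commutativity of the product in $\mathbb K$---is exactly that straightforward check, written out in full.
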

\begin{proof}
This is straightforward from the definition.
\end{proof}
The Kronecker product allows to give an analogue to a result which is classical for matrices.
\begin{lemma}\label{lemmaRNN'RPRO}
The $\mathbb K$-module $\mathbb K(MN;p,q)$ and $\mathbb K(M;p,q)\otimes \mathbb K(N;p,q)$ are isomorphic.
	An explicit morphism $\phi$ sends each $\mathbf A\otimes\mathbf B$ to $\mathbf A\odot\mathbf B$. 
\end{lemma}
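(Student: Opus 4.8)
The plan is to build $\phi$ from the universal property of the tensor product and then verify that it carries a basis to a basis. First I would observe that Lemmas~\ref{distMat} and~\ref{rMat} together say precisely that the assignment $(\mathbf A,\mathbf B)\mapsto \mathbf A\odot\mathbf B$ is $\mathbb K$-bilinear as a map $\mathbb K(M;p,q)\times\mathbb K(N;p,q)\to\mathbb K(MN;p,q)$. By the universal property of $\otimes$, this bilinear map factors uniquely through a $\mathbb K$-linear map $\phi\colon \mathbb K(M;p,q)\otimes\mathbb K(N;p,q)\to\mathbb K(MN;p,q)$ with $\phi(\mathbf A\otimes\mathbf B)=\mathbf A\odot\mathbf B$. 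This is the candidate isomorphism, so it remains only to show $\phi$ is bijective.

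Next I would exhibit compatible bases. By Claim~\ref{M2E} the elementary hypermatrices $\mathbf E(M,p,q;K,L)$, indexed by $(K,L)\in[M]^p\times[M]^q$, form a $\mathbb K$-basis of $\mathbb K(M;p,q)$, and likewise the $\mathbf E(N,p,q;K',L')$ form a basis of $\mathbb K(N;p,q)$. Consequently the elementary tensors $\mathbf E(M,p,q;K,L)\otimes\mathbf E(N,p,q;K',L')$ form a basis of the tensor product, while the $\mathbf E(MN,p,q;I,J)$ form a basis of $\mathbb K(MN;p,q)$.

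The heart of the argument is then Equation~\eqref{dot2E} (equivalently~\eqref{E2dot}), which computes $\phi$ on these basis vectors: it shows that $\phi$ sends $\mathbf E(M,p,q;K,L)\otimes\mathbf E(N,p,q;K',L')$ to a single elementary hypermatrix $\mathbf E(MN,p,q;I,J)$, whose row and column indices are built coordinatewise from those of $K,K'$ and of $L,L'$ through the map $(r,k)\mapsto(k-1)M+r$. Since this map $[M]\times[N]\to[MN]$ is a bijection, its coordinatewise extension is a bijection between $\big([M]^p\times[M]^q\big)\times\big([N]^p\times[N]^q\big)$ and $[MN]^p\times[MN]^q$. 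Hence $\phi$ carries the chosen basis of the source bijectively onto the chosen basis of the target, and is therefore a $\mathbb K$-module isomorphism.

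I expect the only delicate point to be the bookkeeping: reconciling the slightly different index conventions of~\eqref{defdot}, \eqref{dot2E} and~\eqref{E2dot} (the $\%$ and $/$ operations on sequences) and checking that the coordinatewise map is genuinely a bijection between finite index sets of matching cardinality $M^{p+q}N^{p+q}=(MN)^{p+q}$. Once that index bijection is confirmed, the isomorphism follows at once from the basis-to-basis property, with no further manipulation of the entries of $\odot$ required.
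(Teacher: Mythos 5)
Your proposal is correct and follows essentially the same route as the paper: both arguments rest on the basis of elementary hypermatrices $\mathbf E$ (Claim~\ref{M2E}) and the identity~(\ref{E2dot})/(\ref{dot2E}), the paper phrasing it as an explicit inverse defined on the basis $\mathbf E(MN,p,q;I,J)$ while you phrase it as $\phi$ carrying a basis bijectively onto a basis. Your added care in invoking Lemmas~\ref{distMat} and~\ref{rMat} plus the universal property to construct $\phi$ merely makes explicit what the paper leaves implicit.
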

\begin{proof}
	The fact that $\phi$ is an isomorphism 
	comes that one can construct explicitly its inverse as the unique morphism satisfying 
	$\phi^{-1}(\mathbf E(MN,p,q;I,J))=\phi^{-1}(\mathbf E(M,p,q;I\%M,J\%M)\odot{}
	\mathbf E(M,p,q;I/M,J/M))=\mathbf E(M,p,q;I\%M,J\%M)\otimes
	\mathbf E(M,p,q;I/M,J/M)$.
\end{proof}

The set $\mathbb K(M)\otimes\mathbb K(N):=\bigcup_{p,q}
\mathbb K(M;p,q)\otimes\mathbb K(N;p,q)$  is naturally
endowed with a structure of Pro. It suffices to set $(\mathbf A\otimes\mathbf A')\leftrightarrow (\mathbf B\otimes\mathbf B')
=(\mathbf A\leftrightarrow \mathbf B)\otimes (\mathbf A'\leftrightarrow \mathbf B')$ and
$\begin{array}{c}(\mathbf A\otimes\mathbf A')\\\updownarrow\\ (\mathbf B\otimes\mathbf B')\end{array}
=\begin{array}{c}\mathbf A\\\updownarrow\\ \mathbf B\end{array}\otimes \begin{array}{c}\mathbf A'\\\updownarrow\\ \mathbf B'\end{array}$.
Furthermore, one checks that the action of $\mathbb K$ on each $\mathbb K(M;p,q)\otimes\mathbb K(N;p,q)$ is compatible with the operations
$\leftrightarrow$ and $\updownarrow$ in the sense of the definition of ModPro. So, $\mathbb K(M)\otimes\mathbb K(N)$ is a $\mathbb K$-ModPro.
\begin{theorem}\label{thkronecker}
The $\mathbb K$-ModPro $\mathbb K(M)\otimes\mathbb K(N)$ is isomorphic to $\mathbb K(MN)$.
\end{theorem}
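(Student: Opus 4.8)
The plan is to prove that the $\mathbb K$-module isomorphism $\phi$ of Lemma \ref{lemmaRNN'RPRO}, assembled over all bidegrees $(p,q)$, is in fact an isomorphism of $\mathbb K$-ModPros $\phi:\mathbb K(M)\otimes\mathbb K(N)\to\mathbb K(MN)$. Lemma \ref{lemmaRNN'RPRO} already furnishes $\phi$ as a bijective, graded, $\mathbb K$-linear map sending $\mathbf A\otimes\mathbf B$ to $\mathbf A\odot\mathbf B$, so the only remaining task is to check that $\phi$ is a morphism of PROs: that it intertwines the horizontal composition $\leftrightarrow$, the vertical composition $\updownarrow$, and sends vertical units to vertical units. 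Linearity then upgrades this to a ModPro morphism, and a bijective ModPro morphism is an isomorphism.

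First I would reduce everything to basis elements. By Claim \ref{M2E} the $\mathbf E(M,p,q;I,J)$ form a $\mathbb K$-basis of $\mathbb K(M;p,q)$, so the elementary tensors $\mathbf E(M,p,q;I,J)\otimes\mathbf E(N,p,q;K,L)$ form a basis of $\mathbb K(M;p,q)\otimes\mathbb K(N;p,q)$, and on them $\phi$ acts by the rule $\phi(\mathbf E(M,\dots)\otimes\mathbf E(N,\dots))=\mathbf E(MN,\dots)$ read off from (\ref{E2dot}). Because the distributivity axioms (\ref{lrdist})--(\ref{uddist}) of a ModPro make both $\leftrightarrow$ and $\updownarrow$ bilinear, and $\phi$ is linear, it suffices to verify the two intertwining identities on these basis elements.

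For horizontal composition, taking $\mathbf A=\mathbf E(M,p,q;I,J)$, $\mathbf B=\mathbf E(M,p',q';I',J')$ and $\mathbf A',\mathbf B'$ the analogous basis elements over $N$, the componentwise definition of $\leftrightarrow$ on $\mathbb K(M)\otimes\mathbb K(N)$ together with (\ref{EHE}) rewrites $\phi((\mathbf A\otimes\mathbf A')\leftrightarrow(\mathbf B\otimes\mathbf B'))$ as $\mathbf E(M,\dots;II',JJ')\odot\mathbf E(N,\dots;KK',LL')$, while (\ref{dot2E}) and (\ref{EHE}) in $\mathbb K(MN)$ rewrite $\phi(\mathbf A\otimes\mathbf A')\leftrightarrow\phi(\mathbf B\otimes\mathbf B')$ in terms of the index-combination of $I$ with $K$ catenated with that of $I'$ with $K'$. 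Equality then reduces to the single observation that the Kronecker index-combination $(r,k)\mapsto(k-1)M+r$ appearing in (\ref{dot2E}) acts entrywise, hence commutes with catenation of index sequences. The vertical composition is treated identically, now using (\ref{EVE}) in place of (\ref{EHE}): on the tensor side one obtains a product of two Kronecker deltas, one from each factor, and matching it with the single delta produced by (\ref{EVE}) in $\mathbb K(MN)$ uses that the index-combination is injective, so both deltas vanish together; the surviving index data again agree by the same entrywise compatibility.

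Finally I would check the units. The vertical unit of $\mathbb K(MN)$ in bidegree $(p,p)$ is $\sum_{\hat I\in[MN]^p}\mathbf E(MN,p,p;\hat I,\hat I)$, whereas the unit of the tensor-product ModPro is $Id_p\otimes Id_p$ with $Id_p=\sum_{I\in[M]^p}\mathbf E(M,p,p;I,I)$ over $M$ and the analogue over $N$. Applying $\phi$ and (\ref{dot2E}) sends this to $\sum_{I\in[M]^p,\,K\in[N]^p}\mathbf E(MN,p,p;\dots)$ with equal diagonal indices, and since $(r,k)\mapsto(k-1)M+r$ is a bijection $[M]\times[N]\to[MN]$, these diagonal indices range exactly once over $[MN]^p$, recovering the unit of $\mathbb K(MN)$. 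I expect the main obstacle to be not any single step, each of which is short, but the index bookkeeping of the Kronecker encoding: one must keep straight the interplay of $\%M$, $/N$, catenation, and the summation index of $\updownarrow$. The cleanest route is to isolate once and for all the fact that $(r,k)\mapsto(k-1)M+r$ is an entrywise bijection compatible with catenation, after which all three verifications are immediate.
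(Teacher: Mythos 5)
Your proof is correct, and its skeleton coincides with the paper's: both take the $\mathbb K$-module isomorphism $\phi$ of Lemma \ref{lemmaRNN'RPRO} and upgrade it to a ModPro isomorphism by checking that it sends units to units and intertwines $\leftrightarrow$ and $\updownarrow$. The difference lies in how those two intertwining identities are verified. The paper computes directly with generic hypermatrices, writing out the entries of $\phi\left((\mathbf A\otimes\mathbf A')\leftrightarrow(\mathbf B\otimes\mathbf B')\right)$ and of the vertical composite as explicit sums governed by the $\%M$ and $/M$ index decompositions, and matches them termwise (a notationally heavy computation which, as printed, contains several slips, e.g.\ the stray $\mathbb K(NN)$ and the claim $\phi(\mathbf I_0(M)\otimes\mathbf I_0(N))=0$, where the value should be $1$). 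You instead reduce to the basis $\mathbf E(\cdot,p,q;I,J)$ via Claim \ref{M2E} and bilinearity, after which everything follows from the identities (\ref{EHE}), (\ref{EVE}), (\ref{dot2E}) together with the single observation that the pairing $(r,k)\mapsto(k-1)M+r$ is an entrywise bijection compatible with catenation (injectivity giving the matching of Kronecker deltas in the vertical case, surjectivity giving the unit computation). This is precisely the technique the paper itself deploys later, in the lemma on quasi-direct sums and Theorem \ref{GenIdoplus}, so your route is arguably more uniform with the rest of the paper: it isolates all index bookkeeping in one lemma-sized fact, at the small cost of first noting that elementary tensors of the $\mathbf E$'s form a basis of the tensor product (immediate for free modules over a commutative semiring). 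Your unit verification, via the bijectivity of the index pairing summed over $\mathbf E(MN,p,p;\hat I,\hat I)$, is also cleaner than the paper's and handles every bidegree at once, including the degenerate case $p=0$.
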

\begin{proof}
From Lemma \ref{lemmaRNN'RPRO} it suffices to prove that 
$\mathbb K(M)\otimes\mathbb K(N)$ and $\mathbb K(NN)$ are isomorphic as PROs.\\
Consider, as in Lemma  \ref{lemmaRNN'RPRO}, the map $\phi$ sending each $(\mathbf A\otimes\mathbf B)$ to $\mathbf A\odot\mathbf B$.
We have to prove that it is a morphism of PRO. 
\begin{itemize}
\item {\it Image of the units}\\ Obviously $\phi\left(\mathbf I(M)^{\leftrightarrow p}\otimes \mathbf I(N)^{\leftrightarrow p}\right)={}
\mathbf I(M)^{\leftrightarrow p}\odot\mathbf I(N)^{\leftrightarrow p}=\mathbf I(MN)^{\leftrightarrow p}$ and 
$\phi\left(\mathbf I_0(M)\otimes \mathbf I_0(N)\right)=0=\mathbf I_0(MN)$.
\item {\it Compatibility with $\leftrightarrow$}\\ Let $\mathbf A=\left(\hmat {a}{I_1\%N}{J_1\%N}\right)\in \mathbb K(M;p_{A},q_{A})$,
 $\mathbf B=\left(\hmat {b}{I_1\%N}{J_1\%N}\right)\in \mathbf K(M;p_{B},q_{B})$,
 $\mathbf A'=\left(\hmat {a'}{I_1\%N}{J_1\%N}\right)\in \mathbb K(N;p_{A},q_A)$ and $\mathbf B'=\left(\hmat {b'}{I_1\%N}{J_1\%N}\right)\in \mathbb K(N;p_B,q_B)$. We have
\[
\phi\left((\mathbf A\otimes \mathbf B)\leftrightarrow (\mathbf A'\otimes \mathbf B')\right)={}
\phi((\mathbf A\leftrightarrow \mathbf B)\otimes (\mathbf A'\leftrightarrow \mathbf B'))
=\left(
\hmat c{I}{J}
\right)_{I\in[NN']^{p_A+p_B}\atop j\in[NN']^{q_A+q_B}}
\] 
where 
$\hmat c{I_1I_2}{J_1J_2}=\hmat {a}{I_1\%N}{J_1\%N}\hmat {b}{I_2\%N}{J_2\%N}\hmat {a'}{I_1/N}{J_1/N}\hmat {b'}{I_2/N}{J_2/N}
=\hmat {a}{I_1\%N}{J_1\%N}\hmat {a'}{I_1/N}{J_1/N}\hmat {b}{I_2\%N}{J_2\%N}\hmat {b'}{I_2/N}{J_2/N}$.
Hence $$ \phi\left((\mathbf A\otimes \mathbf B)\leftrightarrow (\mathbf A'\otimes \mathbf B')\right)={}
\left(\phi(\mathbf A)\odot \phi(\mathbf B')\right)\leftrightarrow \left(\phi(\mathbf B)\odot \phi(\mathbf B')\right)={}
\left(\phi(\mathbf A\otimes \mathbf A')\leftrightarrow 
\phi(\mathbf B\otimes \mathbf B')\right).$$
\item {\it Compatibility with $\updownarrow$}\\ Let $\mathbf A=\left(\hmat {a}{I_1\%N}{J_1\%N}\right)\in \mathbb K(M;p,q)$,
 $\mathbf B=\left(\hmat {b}{I_1\%N}{J_1\%N}\right)\in \mathbf K(M;q,r)$,
 $\mathbf A'=\left(\hmat {a'}{I_1\%N}{J_1\%N}\right)\in \mathbb K(N;p,q)$ and $\mathbf B'=\left(\hmat {b'}{I_1\%N}{J_1\%N}\right)\in \mathbb K(N;q,r)$. We have
\[
\phi\left(\begin{array}{c}\mathbf A\otimes\mathbf A'\\\updownarrow\\\mathbf B\otimes\mathbf B'  \end{array}\right)
=\left(\begin{array}{c}\mathbf A\\\updownarrow\\ \mathbf B\end{array}\right)\odot \left(\begin{array}{c}\mathbf A'\\\updownarrow\\\mathbf B'  \end{array}\right)\\
=\left(\hmat cIJ\right)
\]
where
\[
\hmat cIJ=\displaystyle\mathop\sum_{K\in [M]^q}\mathop\sum_{K'\in[N]^q}\hmat {a}{I\%M}{K_1}
 \hmat {b}{K}{J\%M}\hmat {a'}{I/N}{K'}\hmat {b'}{K'}{J/N}=\sum_{{K\in [MN]^q}}
 \hmat {a}{I\%M}{K\%M}
 \hmat {a'}{I/N}{K/M}\hmat {b}{K\%M}{J\%M}\hmat {b'}{K/M}{J/N}.
\]
Hence we recognize:
\[\phi\left(\begin{array}{c}\mathbf A_1\otimes\mathbf B_1\\\updownarrow\\\mathbf A_2\otimes\mathbf B_2  \end{array}\right)=
\begin{array}{c}\phi\left(\mathbf A_1\otimes\mathbf B_1\right)\\\updownarrow\\\phi\left(\mathbf A_2\otimes\mathbf B_2  \right) \end{array}.
\]
\end{itemize}
This proves the result.
\end{proof}
The proof of Theorem \ref{thkronecker} exhibits  an explicit isomorphism sending each $\mathbf A\otimes\mathbf B$ to $\mathbf A\odot\mathbf B$. It follows that
\begin{equation}\label{KroMorphism1}
(\mathbf A\leftrightarrow\mathbf B)\odot(\mathbf A'\leftrightarrow\mathbf B')=(\mathbf A\odot\mathbf A')\leftrightarrow (\mathbf B\odot\mathbf B').
\end{equation}
Indeed, the preimage by $\phi$ of this identity is
\begin{equation}
(\mathbf A\leftrightarrow\mathbf B)\otimes(\mathbf A'\leftrightarrow\mathbf B')=(\mathbf A\otimes\mathbf A')\leftrightarrow (\mathbf B\otimes\mathbf B').
\end{equation}
In the same way, we obtain
\begin{equation}\label{KroMorphism2}
\begin{array}{c}\mathbf A\\\updownarrow\\\mathbf B\end{array}\odot\begin{array}{c}\mathbf A'\\\updownarrow\\\mathbf B'\end{array}=\begin{array}{c}\mathbf A\odot\mathbf A'\\\updownarrow\\ \mathbf B\odot\mathbf B'\end{array}.
\end{equation}

\subsection{Quasi-direct sum\label{Sum}}
Let $\mathbf A=\left(\hmat aIJ\right)\in\mathbb K(M;p,q)$ and 
$\mathbf B=\left(\hmat bIJ\right)\in\mathbb K(N,p,q)$. We define the \emph{quasi-direct sum} of $\mathbf A$ and $\mathbf B$ by 
\[
\mathbf A\hat\oplus\mathbf B:=\left(\hmat cIJ\right)_{I,J}\in\mathbb K(M+N,p,q)
\]
with
\[
\hmat cIJ=\left\{
\begin{array}{ll}
\hmat aIJ&\mbox{ if }I\in[M]^p\mbox{ and }J\in[M]^q,\\
\hmat b{I-[M,\dots,M]}{J-[M,\dots,M]}&\mbox{ if }I-[M,\dots,M]\in[N]^p\mbox{ and }J-[M,\dots,M]\in[N]^q,\\
0&\mbox{ otherwise.}
\end{array}
\right.
\]
From this definition, one has
\begin{equation}\label{IoplusI}
\mathbf I(M)\hat\oplus\mathbf I(N)=\mathbf I(M+N).
\end{equation}
and for any $I\in[M]^{p}$, $J\in [M]^{q}$, $I'=[i'_{1},\dots,i'_{p}]\in[N]^{p}$, and $J'=[j'_{1},\dots,j'_{q}]\in[N]^{q}$, we have
\begin{equation}\label{EoplusE}
	\mathbf E(M,p,q;I,J)\hat\oplus \mathbf E(N,p,q;I',J')=\mathbf E(M+N,p,q;I,J)+
	\mathbf E(M+N,p,q;[i'_{1},\dots,i'_{p}]+[M^{p}],[j'_{1},\dots,j'_{q}]+[M^{q}]),
\end{equation}
where $[M^{n}]=[\overbrace{M,\dots,M}^{n\times}]$.
We do not call this operation ``direct sum'' because it not compatible with the horizontal composition as 
shown by the simplest counter example below.\\
Consider the hypermatrix $\left(\mathbf I(1)\hat\oplus\mathbf I(1)\right)\leftrightarrow \left(\mathbf I(1)\hat\oplus\mathbf I(1)\right)=\mathbf I(2)^{\leftrightarrow 2}$ which differs from $\left(\mathbf I(1)\leftrightarrow\mathbf I(1)\right)\hat\oplus \left(\mathbf I(1)\leftrightarrow \mathbf I(1)\right)$. Indeed,
\[\left(\mathbf I(1)\leftrightarrow \mathbf I(1)\right)\hat\oplus \left(\mathbf I(1)\leftrightarrow I(1)\right)={}
\mathbf I(1)^{\leftrightarrow 2}\hat\oplus \mathbf I(1)^{\leftrightarrow 2}=\left(\hmat \iota IJ\right)_{I,J}
\]
where
\[
\hmat \iota IJ=\left\{
\begin{array}{ll}
1&\mbox{ if }I=J\mbox{ and }(I=[1,1]\mbox{ or }I=[2,2]),\\
0&\mbox{ otherwise}.
\end{array}
\right.
\]
We remark that $\hmat \iota{12}{12}=0$ whilst the corresponding entry in $\mathbf I(2)^{\leftrightarrow 2}$ equals $1$.\\
So in general
\begin{equation}
(\mathbf A\hat\oplus\mathbf A')\leftrightarrow(\mathbf B\hat\oplus\mathbf B')\neq 
(\mathbf A\leftrightarrow \mathbf B)\hat\oplus(\mathbf A'\leftrightarrow\mathbf B').
\end{equation}
However the quasi-direct sum is compatible with the vertical composition.
\begin{proposition}\label{Moplus}
Let $\mathbf A\in\mathbb K(M;p,q)$, $\mathbf B\in\mathbb K(M;q,r)$, $\mathbf A'\in\mathbb K(N,p,q)$ and 
$\mathbf B'\in\mathbb K(N,q,r)$ be four hypermatrices. We have
\[
\begin{array}{c}
\mathbf A\hat\oplus\mathbf A'\\
\updownarrow\\
\mathbf B\hat\oplus\mathbf B'\\
\end{array}=\left(\begin{array}{c}\mathbf A\\\updownarrow\\\mathbf B\end{array}\right)\hat\oplus
\left(\begin{array}{c}\mathbf A'\\\updownarrow\\\mathbf B'\end{array}\right).
\]
\end{proposition}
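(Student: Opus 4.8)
The plan is to prove the identity coefficient by coefficient: for fixed $I\in[M+N]^{p}$ and $J\in[M+N]^{r}$ I would compare the $(I,J)$-entry of the two sides. First I would unwind the vertical composition, which for two hypermatrices over the \emph{same} dimension $M+N$ is the entrywise sum $\left(\begin{array}{c}\mathbf X\\\updownarrow\\\mathbf Y\end{array}\right)_{I,J}=\sum_{\TK\in[M+N]^{q}}\hmat xI\TK\hmat y\TK J$. The key structural remark is that $\mathbf A\hat\oplus\mathbf A'$ is supported on two disjoint ``pure'' blocks: its $(I,\TK)$-entry vanishes unless every component of $I$ and of $\TK$ lies in $\{1,\dots,M\}$, in which case it equals $\hmat aI\TK$, or every component lies in $\{M+1,\dots,M+N\}$, in which case it equals $\hmat {a'}{I-[M^{p}]}{\TK-[M^{q}]}$; on any index tuple mixing the two ranges it is $0$ (and likewise for $\mathbf B\hat\oplus\mathbf B'$).

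Next I would run a case analysis on the block-types of $I$ and $J$. If $I\in[M]^{p}$ and $J\in[M]^{r}$, then $(\mathbf A\hat\oplus\mathbf A')_{I,\TK}=0$ unless $\TK\in[M]^{q}$, so the summation over $\TK\in[M+N]^{q}$ collapses to $\sum_{\TK\in[M]^{q}}\hmat aI\TK\hmat b\TK J$, which is exactly the $(I,J)$-entry of $\left(\begin{array}{c}\mathbf A\\\updownarrow\\\mathbf B\end{array}\right)$ and hence of the low block of the right-hand side. Symmetrically, if $I$ and $J$ both lie in the high range, the sum collapses to $\TK$ in the high range and reproduces the $(I-[M^{p}],J-[M^{r}])$-entry of $\left(\begin{array}{c}\mathbf A'\\\updownarrow\\\mathbf B'\end{array}\right)$, matching the high block. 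In every remaining case---$I$ or $J$ itself mixed, or $I$ low while $J$ is high, or vice versa---I would show both sides are $0$: the right-hand side is zero there by definition of $\hat\oplus$, and on the left each summand $(\mathbf A\hat\oplus\mathbf A')_{I,\TK}\,(\mathbf B\hat\oplus\mathbf B')_{\TK,J}$ vanishes because a nonzero first factor forces $\TK$ into the block of $I$ while a nonzero second factor forces $\TK$ into the block of $J$, which is impossible once $I$ and $J$ sit in different blocks.

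The step that needs the most care is precisely this vanishing of the cross-terms, and it rests entirely on the disjointness of $\{1,\dots,M\}$ and $\{M+1,\dots,M+N\}$: that disjointness is what prevents an intermediate index $\TK$ from bridging a low block to a high block. I would also flag why the slicker ``reduce to the basis $E(\cdot)$'' route using Claim \ref{M2E} together with (\ref{EVE}) and (\ref{EoplusE}) is \emph{not} available here: although $\updownarrow$ is bilinear, the quasi-direct sum $\hat\oplus$ is not additive in either argument (expanding a sum in one slot would duplicate the other block, so $(\mathbf X_{1}+\mathbf X_{2})\hat\oplus\mathbf Y\neq\mathbf X_{1}\hat\oplus\mathbf Y+\mathbf X_{2}\hat\oplus\mathbf Y$), whence one cannot expand all four matrices in the $E$-basis and recombine termwise. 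The robust route is therefore the direct entrywise computation above, in which no linearity of $\hat\oplus$ is invoked---only its explicit block description.
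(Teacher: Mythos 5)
Your proof is correct and is essentially the paper's own argument: both proceed entrywise, expanding the $(I,J)$-entry of the left side as a sum over the intermediate index $K\in[M+N]^{q}$, splitting that sum according to the low block $[M]^{q}$ and the high block, and observing that no $K$ can bridge the two ranges, so the cross cases vanish and the two pure cases reproduce the blocks of the right-hand side. The only difference is the order of the case analysis (you split on the blocks of $(I,J)$ first, the paper splits the $K$-sum first), which is cosmetic.
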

\begin{proof}
For convenience let us set $\mathbf A\hat\oplus\mathbf A'=\left(\hmat cIJ\right)_{IJ}$, 
$\mathbf B\hat\oplus\mathbf B'=\left(\hmat dIJ\right)_{IJ}$, $\begin{array}{c}
\mathbf A\hat\oplus\mathbf A'\\
\updownarrow\\
\mathbf B\hat\oplus\mathbf B'\\
\end{array}=\left(\hmat eIJ\right)_{IJ}$,  and  $\left(\begin{array}{c}\mathbf A\\\updownarrow\\\mathbf B\end{array}\right)\hat\oplus
\left(\begin{array}{c}\mathbf A'\\\updownarrow\\\mathbf B'\end{array}\right)=\left(\hmat {g}IJ\right)_{IJ}$.
Our goal consists to prove that for each $I,J$, $\hmat eIJ=\hmat gIJ$. To this aim we  write the entries of each matrices as a function of $\hmat aIJ$, $\hmat {a'}IJ$, $\hmat bIJ$ and $\hmat {b'}IJ$ which are, respectively, the entries of $\mathbf A,\mathbf  A',\mathbf  B$ and $\mathbf B'$.
First note that
\[
\hmat eIJ=\displaystyle\sum_{K\in[M+N]^q}\hmat cIK\hmat dKJ
=\left(\displaystyle\sum_{K\in[M]^q}\hmat cIK\hmat dKJ\right)
+
\left(\displaystyle\sum_{K-[M,\dots,M]\in[N]^q}\hmat cIK\hmat dKJ\right)
\]
But the two sums in the right hand side can not be simultaneously non zero. Hence 
\[\hmat eIJ=\left\{\begin{array}{ll}
\displaystyle\sum_{K\in[M]^q}\hmat aIK\hmat bKJ
&\mbox{ if }I\in[M]^p\mbox{ and }J\in[M]^q\\
\displaystyle\sum_{K\in[N']^q}\hmat {a'}{I-[M,\dots,M]}{K}\hmat {b'}{K}{J-[M,\dots,M]}
&\mbox{ if }I-[M,\dots,M]\in[N]^p\mbox{ and }J-[M,\dots,M]\in[N]^q\\
0&\mbox{ otherwise}.
\end{array}\right.
\]
This is exactly the value of $\hmat gIJ$.
\end{proof}
Remark also that $\hat\oplus$ is bilinear
\begin{equation}\label{bilinoplus}
	(\mathbf A+\mathbf A')\hat\oplus (\mathbf B+\mathbf B')=
	\mathbf A\hat\oplus\mathbf B+\mathbf A\hat\oplus\mathbf B'+\mathbf A'\hat\oplus\mathbf B+\mathbf A'\hat\oplus\mathbf B'
\end{equation}

Consider now  more complicated identities.
\begin{lemma}
Let $m_a, n_a, m_b, m_c, p_q, q_d, q_e\geq 0$ and $N_0, N_1, n_b, n_c, p_e>0$ such that $n_a+n_b> p_d$ and $n_a+n_b+n_c=p_q+p_e$.
\begin{itemize}
\item
Let $\mathbf A=\left(\hmat a IJ\right)\in\mathbb K(N_0+ N_1; m_a,n_a)$, 
$\mathbf D=\left(\hmat d IJ\right)\in\mathbb K(N_0+ N_1; p_d,q_d)$, 
$\mathbf B_i=\left(\hmat {b_i} IJ\right)\in\mathbb K(N_i; m_b,n_b)$, 
$\mathbf C_i=\left(\hmat {c_i} IJ\right)\in\mathbb K(N_i; m_c,n_c)$ and 
$\mathbf E_i=\left(\hmat {e_i} IJ\right)\in\mathbb K(N_i; p_e,q_e)$ ($i=0,1$). One has
\begin{equation}\label{A+B+C}
\begin{array}{c}
\mathbf A\leftrightarrow\left((\mathbf B_0\leftrightarrow \mathbf C_0)\hat\oplus(\mathbf B_1\leftrightarrow \mathbf C_1)\right)\\
\updownarrow\\
\mathbf D\leftrightarrow (\mathbf E_0\hat\oplus\mathbf E_1)
\end{array}
=\begin{array}{c}
\mathbf A\leftrightarrow (\mathbf B_0\hat\oplus\mathbf B_1)\leftrightarrow (\mathbf C_0\hat\oplus\mathbf C_1)\\
\updownarrow \\
\mathbf D\leftrightarrow (\mathbf E_0\hat\oplus \mathbf E_1)
\end{array}
\end{equation}
\item Let $\mathbf A=\left(\hmat a IJ\right)\in\mathbb K(N_0+ N_1; n_a,m_a)$,
 $\mathbf D=\left(\hmat d IJ\right)\in\mathbb K(N_0+ N_1; q_d,p_d)$,
  $\mathbf B_i=\left(\hmat {b_i} IJ\right)\in\mathbb K(N_i; n_b,m_b)$,
   $\mathbf C_i=\left(\hmat {c_i} IJ\right)\in\mathbb K(N_i; n_c,m_c)$ and 
   $\mathbf E_i=\left(\hmat {e_i} IJ\right)\in\mathbb K(N_i; q_e,p_e)$ ($i=0,1$). One has
\begin{equation}\label{D+E}
\begin{array}{c}
\mathbf D\leftrightarrow (\mathbf E_0\hat\oplus\mathbf E_1)\\
\updownarrow\\
\mathbf A\leftrightarrow\left((\mathbf B_0\leftrightarrow \mathbf C_0)\hat\oplus(\mathbf B_1\leftrightarrow \mathbf C_1)\right)
\end{array}
=\begin{array}{c}
\mathbf D\leftrightarrow (\mathbf E_0\hat\oplus \mathbf E_1)\\
\updownarrow \\
\mathbf A\leftrightarrow (\mathbf B_0\hat\oplus\mathbf B_1)\leftrightarrow (\mathbf C_0\hat\oplus\mathbf C_1)
\end{array}
\end{equation}
\end{itemize}
\end{lemma}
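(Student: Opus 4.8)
The plan is to verify (\ref{A+B+C}) entrywise, the entire difficulty being concentrated in the fact — already observed in the text — that $\hat\oplus$ fails to commute with $\leftrightarrow$. Write the two top factors as $\mathbf T_L=\mathbf A\leftrightarrow\left((\mathbf B_0\leftrightarrow\mathbf C_0)\hat\oplus(\mathbf B_1\leftrightarrow\mathbf C_1)\right)$ and $\mathbf T_R=\mathbf A\leftrightarrow(\mathbf B_0\hat\oplus\mathbf B_1)\leftrightarrow(\mathbf C_0\hat\oplus\mathbf C_1)$, and set $\mathbf M=\mathbf D\leftrightarrow(\mathbf E_0\hat\oplus\mathbf E_1)$; the claim is $\mathbf T_L\updownarrow\mathbf M=\mathbf T_R\updownarrow\mathbf M$. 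By horizontal associativity (\ref{hass}) and the bilinearity of $\leftrightarrow$ (\ref{lrdist}) one has $\mathbf T_R-\mathbf T_L=\mathbf A\leftrightarrow\Delta$, where $\Delta=(\mathbf B_0\hat\oplus\mathbf B_1)\leftrightarrow(\mathbf C_0\hat\oplus\mathbf C_1)-(\mathbf B_0\leftrightarrow\mathbf C_0)\hat\oplus(\mathbf B_1\leftrightarrow\mathbf C_1)$; and since $\updownarrow$ is bilinear (\ref{uddist}), it suffices to prove $(\mathbf A\leftrightarrow\Delta)\updownarrow\mathbf M=0$.

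First I would identify $\Delta$ explicitly from the definition of $\hat\oplus$. Reading a row/column index of $\mathbf B_i\leftrightarrow\mathbf C_i$ as a concatenation $[I_1I_2]$ with $I_1$ of length $m_b$ (resp. $n_b$) and $I_2$ of length $m_c$ (resp. $n_c$), the entry of $(\mathbf B_0\hat\oplus\mathbf B_1)\leftrightarrow(\mathbf C_0\hat\oplus\mathbf C_1)$ is a product that is nonzero only when $I_1,J_1$ lie in a common block $a\in\{0,1\}$ and $I_2,J_2$ lie in a common block $b$, whereas $(\mathbf B_0\leftrightarrow\mathbf C_0)\hat\oplus(\mathbf B_1\leftrightarrow\mathbf C_1)$ is nonzero only when $a=b$ and there agrees with the former. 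Hence $\Delta$ is supported exactly on the ``cross-block'' indices $a\neq b$: its surviving entries carry the $\mathbf B$-part of the index in one summand and the $\mathbf C$-part in the other.

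The heart of the proof is the entrywise expansion $(\mathbf A\leftrightarrow\Delta)\updownarrow\mathbf M=\sum_{\TK}(\mathbf A\leftrightarrow\Delta)_{\TI,\TK}\,\mathbf M_{\TK,\TJ}$, with $\TK\in[N_0+N_1]^{\,n_a+n_b+n_c}$. I would read $\TK$ in two ways: as columns of the top it splits as $[\TK_A,\TK_B,\TK_C]$ of lengths $n_a,n_b,n_c$, while as rows of $\mathbf M$ it splits as $[\TK',\TK'']$ of lengths $p_d,p_e$. The hypothesis $n_a+n_b>p_d$ guarantees that $\TK''$ contains all of $\TK_C$ together with at least the last coordinate of $\TK_B$. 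For a surviving term of $\Delta$ the factor $(\mathbf A\leftrightarrow\Delta)_{\TI,\TK}$ forces $\TK_B$ entirely into some block $a$ and $\TK_C$ entirely into the opposite block $b\neq a$; but $\mathbf M_{\TK,\TJ}=\mathbf D_{\TK',\TJ'}\,(\mathbf E_0\hat\oplus\mathbf E_1)_{\TK'',\TJ''}$ is nonzero only if $\TK''$ lies in a single block, which would put the last coordinate of $\TK_B$ (in block $a$) and all of $\TK_C$ (in block $b$) in one block, i.e. $a=b$ — a contradiction. Thus every summand vanishes, giving $(\mathbf A\leftrightarrow\Delta)\updownarrow\mathbf M=0$ and hence (\ref{A+B+C}).

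The step I expect to be the main obstacle is precisely this double bookkeeping of the contraction index $\TK$: checking that the two partitions $[\TK_A,\TK_B,\TK_C]$ and $[\TK',\TK'']$ overlap in the way dictated by $n_a+n_b>p_d$, and that one shared coordinate between $\TK_B$ and $\TK''$ already collapses the cross-block case (this uses that $\hat\oplus$ pins a whole index block into a single summand, so one coordinate fixes the block). A rigorous alternative to the entrywise bound is to reduce, by Claim \ref{M2E} and the bilinearities (\ref{lrdist}), (\ref{uddist}), (\ref{bilinoplus}), to the case where $\mathbf A,\mathbf B_i,\mathbf C_i,\mathbf D,\mathbf E_i$ are basis hypermatrices $\mathbf E(\cdot)$, and then compute both sides with the product rules (\ref{EHE}), (\ref{EVE}), (\ref{EoplusE}); the Kronecker factor $\delta_{J,J'}$ of (\ref{EVE}) enforces exactly the block-matching above. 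Finally, (\ref{D+E}) is the mirror image of (\ref{A+B+C}): applying the transpose anti-automorphism of $\mathbb K(N)$ — which swaps the two gradings, sends $\mathbf A\updownarrow\mathbf B$ to $\mathbf B^{t}\updownarrow\mathbf A^{t}$, and preserves both $\leftrightarrow$ and $\hat\oplus$ — carries (\ref{A+B+C}), written for the transposed data, onto (\ref{D+E}); equivalently one repeats the argument with top and bottom exchanged, the inequality $n_a+n_b>p_d$ again forcing the fatal overlap at the $\mathbf B$-boundary.
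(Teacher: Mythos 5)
Your central combinatorial point---that $n_a+n_b>p_d$ forces the last coordinate of the $\mathbf B$-part and all of the $\mathbf C$-part of the contraction index $\TK$ into the $\hat\oplus$-factor of the bottom row index, so that the block of $\mathbf B$, the block of $\mathbf E$ and the block of $\mathbf C$ must all coincide---is exactly the mechanism of the paper's proof. The paper, however, implements it via the route you list only as a ``rigorous alternative'': it uses Claim \ref{M2E} and the bilinearity of $\leftrightarrow$, $\updownarrow$ and $\hat\oplus$ to reduce to the case where all factors are basis hypermatrices $\mathbf E(\cdot)$, expands each quasi-direct sum into two shifted basis elements via (\ref{EoplusE}), and then checks with (\ref{EHE}) and (\ref{EVE}) that only the block-matching products survive the vertical composition. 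Your explicit transpose argument for (\ref{D+E}) makes precise what the paper dismisses as ``obtained by symmetry'', and it is correct.

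The one step of your primary formulation that fails as written is the very first one: $\mathbb K$ is only assumed to be a commutative semiring (and this generality is essential later, where $\mathbb K=\mathbb B$ or $\mathbb N$), so additive inverses need not exist, and neither the difference $\Delta=\mathbf T_R-\mathbf T_L$ nor the reduction to ``$(\mathbf A\leftrightarrow\Delta)\updownarrow\mathbf M=0$'' is meaningful. Fortunately your own support analysis repairs this without any new idea: the $(\TI,\TK)$ entries of $\mathbf T_L$ and $\mathbf T_R$ can differ only when the $\mathbf B$-part and the $\mathbf C$-part of $\TK$ lie in opposite blocks, and for such $\TK$ you have shown $\mathbf M_{\TK,\TJ}=0$ (using $n_b,n_c,p_e>0$ so that the overlapping coordinates actually exist). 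Hence the two contraction sums $\sum_{\TK}(\mathbf T_L)_{\TI,\TK}\mathbf M_{\TK,\TJ}$ and $\sum_{\TK}(\mathbf T_R)_{\TI,\TK}\mathbf M_{\TK,\TJ}$ agree term by term, with no cancellation ever invoked. With that rephrasing (or by adopting the basis-element computation outright, as the paper does) your proof is complete.
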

\begin{proof}
We  prove only (\ref{A+B+C}); the other identity being hence obtained by symmetry.
From the bilinearity of $\hat\oplus$, $\leftrightarrow$ and $\updownarrow$ and Claim \ref{M2E}, it suffices to prove the result in the cases where $\mathbf A=\mathbf E(N_{0}+N_{1},m_{a},n_{a};I_{a},J_{a})$,
$\mathbf D=\mathbf E(N_{0}+N_{1},p_{d},q_{d};I_{d},J_{d})$,
$\mathbf B_{i}=\mathbf E(N_{i},m_{b},n_{b};I_{b}^{(i)},J_{b}^{(i)})$,
$\mathbf C_{i}=\mathbf E(N_{i},m_{c},n_{c};I_{c}^{(i)},J_{c}^{(i)})$, and
$\mathbf E_{i}=\mathbf E(N_{i},p_{e},q_{e};I_{e}^{(i)},J_{e}^{(i)})$.
One has
\[{}\begin{array}{l}
\mathbf A\leftrightarrow\left((\mathbf B_0\leftrightarrow \mathbf C_0)\hat\oplus(\mathbf B_1\leftrightarrow \mathbf C_1)\right)={}
\mathbf E\left(N_{0}+N_{1},m_{a}+m_{b}+m_{c},n_{a}+n_{b}+n_{c};I_{a}I_{b}^{(0)}I_{c}^{(0)},J_{a}J_{b}^{(0)}J_{c}^{(0)}\right)\\
+\mathbf E\left(N_{0}+N_{1},m_{a}+m_{b}+m_{c},n_{a}+n_{b}+n_{c};I_{a}I_{b}^{(1)}I_{c}^{(1)}+[0^{m_{a}}N_{0}^{m_{b}+m_{v}}],
J_{a}J_{b}^{(1)}J_{c}^{(1)}+[0^{n_{a}}N_{0}^{n_{b}+n_{v}}]\right)
\end{array}
\]
and
\[{}\begin{array}{l}
\mathbf D\leftrightarrow\left(\mathbf E_0\hat\oplus \mathbf E_1\right)={}
\mathbf E\left(N_{0}+N_{1},p_{d}+p_{e},q_{d}+q_{e};I_{d}I_{e}^{(0)},J_{d}J_{e}^{(0)}\right)
\\+\mathbf E\left(N_{0}+N_{1},p_{d}+p_{e},q_{d}+q_{e};I_{d}I_{e}^{(1)}+[0^{p_{d}}N_{0}^{p_{e}}],
J_{d}J_{e}^{(1)}+[0^{q_{d}}N_{0}^{q_{e}}]\right)\end{array}
\]
Since $n_{a}+n_{b}>p_{e}$, we have
\[\begin{array}{l}
\begin{array}{c}
\mathbf A\leftrightarrow\left((\mathbf B_0\leftrightarrow \mathbf C_0)\hat\oplus(\mathbf B_1\leftrightarrow \mathbf C_1)\right)\\
\updownarrow\\
\mathbf D\leftrightarrow (\mathbf E_0\hat\oplus\mathbf E_1)
\end{array}=
\begin{array}{c}
\mathbf E\left(N_{0}+N_{1},m_{a}+m_{b}+m_{c},n_{a}+n_{b}+n_{c};I_{a}I_{b}^{(0)}I_{c}^{(0)},J_{a}J_{b}^{(0)}J_{c}^{(0)}\right)
\\\updownarrow\\
\mathbf E\left(N_{0}+N_{1},p_{d}+p_{e},q_{d}+q_{e};I_{d}I_{e}^{(0)},J_{d}J_{e}^{(0)}\right)
\end{array}\\
+\begin{array}{c}
\mathbf E\left(N_{0}+N_{1},m_{a}+m_{b}+m_{c},n_{a}+n_{b}+n_{c};I_{a}I_{b}^{(1)}I_{c}^{(1)}+[0^{m_{a}}N_{0}^{m_{b}+m_{v}}],
J_{a}J_{b}^{(1)}J_{c}^{(1)}+[0^{n_{a}}N_{0}^{n_{b}+n_{v}}]\right)\\\updownarrow\\
\mathbf E\left(N_{0}+N_{1},p_{d}+p_{e},q_{d}+q_{e};I_{d}I_{e}^{(1)}+[0^{p_{d}}N_{0}^{p_{e}}],
J_{d}J_{e}^{(1)}+[0^{q_{d}}N_{0}^{q_{e}}]\right)\end{array}\\=
\begin{array}{c}
\mathbf A\leftrightarrow (\mathbf B_0\hat\oplus\mathbf B_1)\leftrightarrow (\mathbf C_0\hat\oplus\mathbf C_1)\\
\updownarrow \\
\mathbf D\leftrightarrow (\mathbf E_0\hat\oplus \mathbf E_1)
\end{array}.
\end{array}
\]

\end{proof}

More generally, 
\begin{theorem}\label{GenIdoplus}
Let $\mathbf A_\alpha(\epsilon)=\left(\hmat {a(\alpha,\epsilon)}IJ\right)_{I,J}\in \mathbb K(N_{\epsilon};m_\alpha,n_\alpha)$, 
$\mathbf B_\beta(\epsilon)=\left(\hmat {b(\beta,\epsilon)}IJ\right)_{I,J}\in \mathbb K(N_{\epsilon};p_\beta,q_\beta)$,{}
for $\alpha\in \{1,\dots,k\}$, $\beta\in \{1,\dots,\ell\}$ and $\epsilon=0,1$  satisfying
\begin{enumerate}
\item  $p_1+\cdots+p_\beta=n_1+\cdots+n_\alpha$ implies $\beta=\ell$ and $\alpha=k$.\label{enumcond1}
\item $ n_\alpha, p_\beta>0$ 
\end{enumerate}  
Then we have
\begin{equation}
\begin{array}{c}
\left(\mathbf A_1(0)\hat\oplus\mathbf A_1(1)\right)\leftrightarrow\cdots\leftrightarrow \left(\mathbf A_k(0)\hat\oplus\mathbf A_k(1)\right)\\
\updownarrow\\
\left(\mathbf B_1(0)\hat\oplus\mathbf B_1(1)\right)\leftrightarrow\cdots\leftrightarrow \left(\mathbf B_\ell(0)\hat\oplus\mathbf B_\ell(1)\right)\end{array}=
\begin{array}{c}
\left(\mathbf A_1(0)\leftrightarrow\cdots\leftrightarrow \mathbf A_k(0)\right)\\
\updownarrow\\
\left(\mathbf B_1(0)\leftrightarrow\cdots\leftrightarrow \mathbf B_\ell(0)\right)\end{array}
\hat\oplus
\begin{array}{c}
\left(\mathbf A_1(1)\leftrightarrow\cdots\leftrightarrow \mathbf A_k(1)\right)\\
\updownarrow\\
\left(\mathbf B_1(1)\leftrightarrow\cdots\leftrightarrow \mathbf B_\ell(1)\right)\end{array}
\end{equation}
\end{theorem}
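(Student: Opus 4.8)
The plan is to reduce the identity to the case of elementary hypermatrices and then to show that, after expanding both sides, every ``mixed'' term pairing the layer $\epsilon=0$ with the layer $\epsilon=1$ cancels, leaving exactly the two homogeneous terms that constitute the right-hand side. Since $\hat\oplus$ is bilinear by (\ref{bilinoplus}), the compositions $\leftrightarrow$ and $\updownarrow$ are bilinear (the left, right, up and down distributivities), and every hypermatrix has a unique expansion over the $\mathbf E$'s by Claim \ref{M2E}, it suffices to prove the identity when $\mathbf A_\alpha(\epsilon)=\mathbf E(N_\epsilon,m_\alpha,n_\alpha;I_\alpha^{(\epsilon)},J_\alpha^{(\epsilon)})$ and $\mathbf B_\beta(\epsilon)=\mathbf E(N_\epsilon,p_\beta,q_\beta;K_\beta^{(\epsilon)},L_\beta^{(\epsilon)})$ are elementary, exactly as in the reduction used for (\ref{A+B+C}).

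First I would expand each $\mathbf A_\alpha(0)\hat\oplus\mathbf A_\alpha(1)$ with (\ref{EoplusE}) into the sum of an unshifted elementary hypermatrix over $N_0+N_1$ carrying $(I_\alpha^{(0)},J_\alpha^{(0)})$ and a shifted one carrying $(I_\alpha^{(1)}+[N_0^{m_\alpha}],J_\alpha^{(1)}+[N_0^{n_\alpha}])$, and similarly for the $\mathbf B_\beta$. Distributing the horizontal products and using (\ref{EHE}), the top row becomes a sum over choice vectors $\sigma=(\sigma_1,\dots,\sigma_k)\in\{0,1\}^k$ of a single $\mathbf E$ whose column index is the catenation $J_1^{(\sigma_1)}\cdots J_k^{(\sigma_k)}$, the $\alpha$-th block being shifted by $N_0$ exactly when $\sigma_\alpha=1$; likewise the bottom row is a sum over $\tau\in\{0,1\}^\ell$ indexed by the catenation $K_1^{(\tau_1)}\cdots K_\ell^{(\tau_\ell)}$. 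Distributing $\updownarrow$ and applying (\ref{EVE}) to each pair $(\sigma,\tau)$, a summand survives only when the two middle index vectors are equal.

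The crux is to determine which pairs $(\sigma,\tau)$ survive. To each middle vector attach its \emph{layer pattern}, the $\{0,1\}$-string recording for each coordinate whether the entry lies in $[N_0]$ or in the shifted range $\{N_0+1,\dots,N_0+N_1\}$. On the top this pattern is constant equal to $\sigma_\alpha$ on the $\alpha$-th block, of length $n_\alpha>0$; on the bottom it is constant equal to $\tau_\beta$ on the $\beta$-th block, of length $p_\beta>0$. Because every block is nonempty (hypothesis $n_\alpha,p_\beta>0$), a change of layer can occur only at a block boundary, and equality of the two patterns forces the change-points to coincide: any top change-point $n_1+\cdots+n_\alpha$ with $\alpha<k$ would have to equal some bottom change-point $p_1+\cdots+p_\beta$ with $\beta<\ell$. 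Hypothesis (\ref{enumcond1}) forbids $p_1+\cdots+p_\beta=n_1+\cdots+n_\alpha$ unless $\beta=\ell$ and $\alpha=k$, so there are no change-points: $\sigma$ and $\tau$ are each constant, and matching the patterns forces their common values to agree. Thus only the homogeneous pairs $\sigma=\tau=(0,\dots,0)$ and $\sigma=\tau=(1,\dots,1)$ contribute.

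It then remains to identify the two survivors with the right-hand side. By (\ref{EHE}) and (\ref{EVE}), the pair $(0,\dots,0)$ yields the vertical composite over $N_0$ of $\mathbf A_1(0)\leftrightarrow\cdots\leftrightarrow\mathbf A_k(0)$ with $\mathbf B_1(0)\leftrightarrow\cdots\leftrightarrow\mathbf B_\ell(0)$, carrying the delta that compares the catenation of the $J^{(0)}$ with that of the $K^{(0)}$, and the pair $(1,\dots,1)$ yields the analogous composite over $N_1$ with every index shifted by $N_0$. By (\ref{EoplusE}) these two elementary hypermatrices are precisely the two summands of the quasi-direct sum appearing on the right, which finishes the elementary case and hence the theorem. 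The main obstacle is the combinatorial step of the previous paragraph: formalizing the layer pattern and extracting from (\ref{enumcond1}) together with the positivity $n_\alpha,p_\beta>0$ that only homogeneous pairs survive; everything else is the bilinear bookkeeping already carried out for (\ref{A+B+C}).
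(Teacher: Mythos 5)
Your proposal is correct in substance, but it takes a genuinely different route from the paper's. The paper proves Theorem \ref{GenIdoplus} by induction on $k+\ell$: the base case is Proposition \ref{Moplus}, and in the inductive step condition \ref{enumcond1} guarantees that the last bottom boundary $p_1+\cdots+p_{\ell-1}$ falls strictly inside one of the extremal top blocks, so the last two top factors can be merged into $\mathbf A'(\epsilon)=\mathbf A_{k-1}(\epsilon)\leftrightarrow\mathbf A_{k}(\epsilon)$, the induction hypothesis applied, and the merged block split back apart using the preparatory lemma, i.e.\ identity (\ref{A+B+C}). You instead prove the statement in one pass: reduce to elementary hypermatrices, expand the quasi-direct sums by (\ref{EoplusE}), distribute $\leftrightarrow$ and $\updownarrow$, and observe via (\ref{EVE}) that a term indexed by $(\sigma,\tau)\in\{0,1\}^{k}\times\{0,1\}^{\ell}$ survives only if the top and bottom layer patterns agree; since a layer change can occur only at a block boundary (this is exactly where $n_\alpha,p_\beta>0$ enters) and condition \ref{enumcond1} forbids an interior top boundary from coinciding with an interior bottom boundary, both patterns must be constant and equal, leaving only the two homogeneous terms of the right-hand side. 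Your ``layer pattern / change-point'' argument is precisely the combinatorial mechanism that the paper's lemma establishes only for its special three-blocks-over-two-blocks configuration and then propagates by induction; your version makes the mechanism explicit in full generality, is self-contained (no induction, no separate lemma), and is arguably more transparent, at the cost of heavier index bookkeeping.

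One caveat, which applies equally to your reduction and to the paper's own proof of (\ref{A+B+C}): the quasi-direct sum is not bilinear. Equation (\ref{bilinoplus}) as stated is false --- the left-hand side has entries $a+a'$ and $b+b'$ on the two diagonal ranges, while the four-term right-hand side doubles them; the correct identity is $(\mathbf A+\mathbf A')\hat\oplus(\mathbf B+\mathbf B')=\mathbf A\hat\oplus\mathbf B+\mathbf A'\hat\oplus\mathbf B'$. Consequently ``check the identity when both layers are elementary'' is not a legitimate linearity reduction: both sides of the theorem are linear in each \emph{pair} $(\mathbf A_\alpha(0),\mathbf A_\alpha(1))$, because $\mathbf A\hat\oplus\mathbf B=(\mathbf A\hat\oplus 0)+(0\hat\oplus\mathbf B)$ with each summand linear in its argument, and the pairs $(\mathbf E,0)$ and $(0,\mathbf E)$ span the pair module, so the honest reduction is to tuples in which each generator is elementary in one layer and zero in the other. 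After this repair your argument goes through verbatim --- indeed with fewer terms, since each choice vector $\sigma,\tau$ is then forced rather than summed over.
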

\begin{proof}
We prove the result by induction on $k+\ell$. The initial case is given by Proposition \ref{Moplus}. The condition \ref{enumcond1} implies either $p_1+\dots+p_{\ell-1}<n_1+\dots+n_{k-1}$ or $p_1+\dots+p_{\ell-1}>n_1+\dots+n_{k-1}$. The two cases been symmetrical let us consider only the case when $p_1+\dots+p_{\ell-1}<n_1+\dots+n_{k-1}$. Set $\mathbf A'(\epsilon)=\mathbf A_{k-1}(\epsilon)\leftrightarrow \mathbf A_{k}(\epsilon)$ for $\epsilon=\{0,1\}$. By induction, we have
\[\begin{array}{l}
\begin{array}{c}
\left(\mathbf A_0(0)\leftrightarrow\cdots\leftrightarrow \mathbf A_{k-2}(0)\leftrightarrow \mathbf A'(0)\right)\\
\updownarrow\\
\left(\mathbf B_1(0)\leftrightarrow\cdots\leftrightarrow \mathbf B_\ell(0)\right)\end{array}
\hat\oplus
\begin{array}{c}
\left(\mathbf A_0(1)\leftrightarrow\cdots\leftrightarrow \mathbf A_{k-2}(1)\leftrightarrow \mathbf A'(1)\right)\\
\updownarrow\\
\left(\mathbf B_1(1)\leftrightarrow\cdots\leftrightarrow \mathbf B_\ell(1)\right)\end{array}
\\\ \\
=\begin{array}{c}
\left(\mathbf A_0(0)\hat\oplus\mathbf A_0(1)\right)\leftrightarrow\cdots\leftrightarrow \left(\mathbf A_{k-2}(0)\hat\oplus\mathbf A_{k-2}(1)\right)\leftrightarrow\left(\mathbf A'(0)\hat\oplus\mathbf A'(1)\right)\\
\updownarrow\\
\left(\mathbf B_1(0)\hat\oplus\mathbf B_1(1)\right)\leftrightarrow\cdots\leftrightarrow \left(\mathbf B_\ell(0)\hat\oplus\mathbf B_\ell(1)\right)\end{array}
\end{array}\] 
with the notation $\mathbf A_0(\epsilon)=\mathbf I_0=1$.

Setting $\mathbf A''=\left(\mathbf A_0(0)\hat\oplus\mathbf A_0(1)\right)\leftrightarrow\cdots\leftrightarrow \left(\mathbf A_{k-2}(0)\hat\oplus\mathbf A_{k-2}(1)\right)$, $\mathbf B''_\epsilon=\mathbf A_{k-1}(\epsilon)$, $\mathbf C''_\epsilon=\mathbf A_{k}(\epsilon)$, $\mathbf D''=
\left(\mathbf B_1(0)\hat\oplus\mathbf B_1(1)\right)\leftrightarrow\cdots\leftrightarrow 
\left(\mathbf B_{\ell-1}(0)\hat\oplus\mathbf B_{\ell-1}(1)\right)$, $\mathbf E''_\epsilon=\mathbf B_\ell(\epsilon)$, $m_a=m_1+\cdots+m_{k-2}$, $n_a=n_1+\dots+n_{k-2}$, $m_b=m_{k-1}$, $n_b=n_{k-1}$, $m_c=m_{k}$, $n_c=n_{k}$, $p_d=p_1+\cdots+p_{\ell-1}$, $q_d=q_1+\cdots+q_{\ell-1}$, $p_e=p_\ell$ and $q_e=q_\ell$, we obtain
\[
\begin{array}{c}
\left(\mathbf A_1(0)\leftrightarrow\cdots\leftrightarrow \mathbf A_k(0)\right)\\
\updownarrow\\
\left(\mathbf B_1(0)\leftrightarrow\cdots\leftrightarrow \mathbf B_\ell(0)\right)\end{array}
\hat\oplus
\begin{array}{c}
\left(\mathbf A_1(1)\leftrightarrow\cdots\leftrightarrow \mathbf A_k(1)\right)\\
\updownarrow\\
\left(\mathbf B_1(1)\leftrightarrow\cdots\leftrightarrow \mathbf B_\ell(1)\right)\end{array}
=
\begin{array}{c}
\mathbf A''\leftrightarrow\left((\mathbf B''_0\leftrightarrow \mathbf C''_0)\hat\oplus(\mathbf B''_1\leftrightarrow \mathbf C''_1)\right)\\
\updownarrow\\
\mathbf D''\leftrightarrow (\mathbf E''_0\hat\oplus\mathbf E''_1)
\end{array}
\]
with $m_a, n_a, m_b, m_c, p_d, q_d, q_e\geq 0$, $n_b,n_c, p_e>0$, $n_a+n_b>p_d$ and $n_a+n_b+n_c=p_d+p_e$. So by Equation (\ref{A+B+C}), we obtain

\[\begin{array}{rcl}
\begin{array}{c}
\left(\mathbf A_1(0)\leftrightarrow\cdots\leftrightarrow \mathbf A_k(0)\right)\\
\updownarrow\\
\left(\mathbf B_1(0)\leftrightarrow\cdots\leftrightarrow \mathbf B_\ell(0)\right)\end{array}
\hat\oplus
\begin{array}{c}
\left(\mathbf A_1(1)\leftrightarrow\cdots\leftrightarrow \mathbf A_k(1)\right)\\
\updownarrow\\
\left(\mathbf B_1(1)\leftrightarrow\cdots\leftrightarrow \mathbf B_\ell(1)\right)\end{array}
&=&\left.\begin{array}{c}
\mathbf A''\leftrightarrow (\mathbf B''_0\hat\oplus\mathbf B''_1)\leftrightarrow (\mathbf C''_0\hat\oplus\mathbf C''_1)\\
\updownarrow \\
\mathbf D''\leftrightarrow (\mathbf E''_0\hat\oplus \mathbf E''_1)
\end{array}\right.
\\\ \\
&=&\left.
\begin{array}{c}
\left(\mathbf A_0(0)\hat\oplus\mathbf A_0(1)\right)\leftrightarrow\cdots\leftrightarrow 
\left(\mathbf A_k(0)\hat\oplus\mathbf A_k(1)\right)\\
\updownarrow\\
\left(\mathbf B_1(0)\hat\oplus\mathbf B_1(1)\right)\leftrightarrow\cdots\leftrightarrow{}
 \left(\mathbf B_\ell(0)\hat\oplus\mathbf B_\ell(1)\right)\end{array}
\right.\\\ \\&=&
\left.\begin{array}{c}
\left(\mathbf A_1(0)\hat\oplus\mathbf A_1(1)\right)\leftrightarrow\cdots\leftrightarrow \left(\mathbf A_k(0)\hat\oplus\mathbf A_k(1)\right)\\
\updownarrow\\
\left(\mathbf B_1(0)\hat\oplus\mathbf B_1(1)\right)\leftrightarrow\cdots\leftrightarrow \left(\mathbf B_\ell(0)\hat\oplus\mathbf B_\ell(1)\right)\end{array}\right..
\end{array}\]
This ends the proof.
\end{proof}

\section{Representations of circuit PROs}
\subsection{Representations of Free PROs\label{RepFreePros}}
Let $\mathbb K$ be a semiring and $\mathcal P$ be a PRO. A \emph{multilinear representation} of $\mathcal P$ is a morphism of PRO 
from $\mathcal P$
to $\mathbb K(N)$. Straightforwardly from the definition of freeness, one has
\begin{claim}
  Any free PRO has multilinear representations.
\end{claim}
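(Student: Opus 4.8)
The plan is to read off the result as an immediate application of the universal property of free PROs, the only external input being that the target $\mathbb K(N)$ is genuinely a PRO. By Proposition \ref{isModPro}, $\mathbb K(N)$ is a $\mathbb K$-ModPro and in particular a PRO for every $N>0$, so a multilinear representation of a PRO $\mathcal P$ is by definition nothing but a morphism of PRO $\mathcal P\to\mathbb K(N)$. Hence it suffices to exhibit one such morphism in the case $\mathcal P=\mathcal F(\mathcal X)$.

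First I would fix any integer $N>0$, for instance $N=1$, so that $\mathbb K(N)$ is a well-defined PRO. Next I would choose an arbitrary bigraded map $f\colon\mathcal X\to\mathbf F(\mathbb K(N))$ from the generating set into the underlying graded set of $\mathbb K(N)$. Such a map certainly exists, since each graded component $\mathbb K(N,m,n)$ is a nonempty $\mathbb K$-module; the simplest admissible choice sends every generator $\mathfrak x\in\mathcal X_{m,n}$ to the zero hypermatrix of $\mathbb K(N,m,n)$, which lies in $\mathbf F(\mathbb K(N))$ in the correct bidegree.

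Finally I would invoke the universal property recalled in Section \ref{Free}: applied to the PRO $\mathcal P'=\mathbb K(N)$ and the bigraded map $f$, it furnishes a unique morphism of PRO $\overline f\colon\mathcal F(\mathcal X)\to\mathbb K(N)$ satisfying $f=\mathbf F(\overline f)\circ i$. By the definition of multilinear representation, $\overline f$ is exactly such a representation of $\mathcal F(\mathcal X)$, which proves the claim. I do not expect a genuine obstacle here: the argument is a direct instantiation of freeness, with all the real content already packaged in Proposition \ref{isModPro} (that $\mathbb K(N)$ is a PRO) and in the universal property. It is worth noting in passing that neither the value of $N$ nor the images of the generators were constrained, so free PROs in fact admit an abundance of multilinear representations; the subsequent sections' interest lies not in their mere existence but in selecting representations that faithfully reflect the structure of $\mathcal F(\mathcal X)$.
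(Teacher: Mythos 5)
Your proof is correct and follows exactly the paper's intended argument: the paper gives no explicit proof beyond ``straightforwardly from the definition of freeness,'' which is precisely your instantiation of the universal property with target $\mathbb K(N)$ (a PRO by Proposition~\ref{isModPro}) and any bigraded map on generators, such as the zero assignment. Your closing remark that the representations are abundant rather than merely existent is a fair observation, consistent with the paper's subsequent focus on specific, structure-reflecting representations.
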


Remark that the free PRO $\mathcal F(\mathcal X)$ is naturally extended as a $\mathbb K$-ModPro $\mathbb K\langle\mathcal X\rangle:={}
\bigcup_{p,q\geq 1}\mathbb K[\mathcal F_{p,q}(\mathcal X)]$ 
where $\mathbb K[\mathcal F_{p,q}(\mathcal X)]$ denotes the free $\mathbb K$-module generated by $\mathcal F_{p,q}(\mathcal X)$.
 A representation
$\mu:\mathcal F(\mathcal X)\rightarrow \mathbb K(N)$ induces  a morphism of ModPro $\mathbb K\langle\mathcal X\rangle\rightarrow \mathbb K(N)$ .

The \emph{Hadamard product} of two representations $\mu:\mathcal F(\mathcal X)\rightarrow\mathbb K(M)$ and 
$\mu':\mathcal F(\mathcal X)\rightarrow\mathbb K(N)$ is the unique representation
 $\mu\hat\odot\mu':\mathcal F(\mathcal X)\rightarrow\mathbb K(MN)$ satisfying 
 $(\mu\hat\odot\mu')(\mathfrak x)=\mu(\mathfrak x)\odot\mu'(\mathfrak x)$ for each $\mathfrak x\in\mathcal X$.\\
As a direct consequence of Theorem \ref{thkronecker}, we obtain the following result.
\begin{corollary}\label{HadamardRep}
Let $\mu:\mathcal F(\mathcal X)\rightarrow\mathbb K(M)$ and 
$\mu':\mathcal F(\mathcal X)\rightarrow\mathbb K(N)$ be two representations of the free PRO. 
Then for any $\mathfrak x\in \mathcal F(\mathcal C)$, $(\mu\hat\odot\mu')(\mathfrak x)=\mu(\mathfrak x)\odot\mu'(\mathfrak x)$.
\end{corollary}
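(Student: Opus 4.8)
The plan is to realize $\mu\hat\odot\mu'$ concretely and then invoke the universal property of the free PRO. By definition $\mu\hat\odot\mu'$ is the \emph{unique} morphism of PRO $\mathcal F(\mathcal X)\to\mathbb K(MN)$ whose value on each generator $\mathfrak x\in\mathcal X$ is $\mu(\mathfrak x)\odot\mu'(\mathfrak x)$; existence and uniqueness are guaranteed because $\mathcal F(\mathcal X)$ is free and $\mathbb K(MN)$ is a PRO (Proposition \ref{isModPro}). Hence it suffices to exhibit \emph{some} morphism of PRO agreeing with $\mathfrak x\mapsto\mu(\mathfrak x)\odot\mu'(\mathfrak x)$ on $\mathcal X$: uniqueness will then force it to coincide with $\mu\hat\odot\mu'$ on all of $\mathcal F(\mathcal X)$, which is exactly the claimed identity.

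First I would define $g:\mathcal F(\mathcal X)\to\mathbb K(MN)$ by $g(\mathfrak p)=\mu(\mathfrak p)\odot\mu'(\mathfrak p)$. This is a genuine map, since $\mu$ and $\mu'$ are already maps on $\mathcal F(\mathcal X)$, so no question of independence of the chosen decomposition arises. It clearly takes the prescribed values on $\mathcal X$, so the whole proof reduces to verifying the four morphism axioms for $g$. For the grading, since $\mu,\mu'$ are graded and $\odot\colon\mathbb K(M,m,n)\times\mathbb K(N,m,n)\to\mathbb K(MN,m,n)$, we get $g(\mathcal F(\mathcal X)_{m,n})\subset\mathbb K(MN,m,n)$. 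For the horizontal composition, using that $\mu,\mu'$ are morphisms and then identity (\ref{KroMorphism1}),
\[
\begin{aligned}
g(\mathfrak p\leftrightarrow\mathfrak q)
&=(\mu(\mathfrak p)\leftrightarrow\mu(\mathfrak q))\odot(\mu'(\mathfrak p)\leftrightarrow\mu'(\mathfrak q))\\
&=(\mu(\mathfrak p)\odot\mu'(\mathfrak p))\leftrightarrow(\mu(\mathfrak q)\odot\mu'(\mathfrak q))=g(\mathfrak p)\leftrightarrow g(\mathfrak q).
\end{aligned}
\]
The vertical composition is handled identically, replacing (\ref{KroMorphism1}) by (\ref{KroMorphism2}). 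For the units, $g(Id_n)=\mu(Id_n)\odot\mu'(Id_n)=\mathbf I(M)^{\leftrightarrow n}\odot\mathbf I(N)^{\leftrightarrow n}=\mathbf I(MN)^{\leftrightarrow n}$, the last equality being precisely the computation of the image of the units in the proof of Theorem \ref{thkronecker}; the degenerate case $n=0$ reads $1\odot 1=1$.

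The substance of the argument lies entirely in the compatibility identities (\ref{KroMorphism1}) and (\ref{KroMorphism2}) of the Kronecker product $\odot$ with $\leftrightarrow$ and $\updownarrow$, which are already available as consequences of the isomorphism of Theorem \ref{thkronecker}. Granting them there is no real obstacle; the only point demanding care is the bookkeeping of formats, namely that $\mu(\mathfrak p)$ and $\mu'(\mathfrak p)$ carry the same bigrading $(m,n)$ so that $\odot$ is defined, and likewise that the pairs fed into (\ref{KroMorphism1}) and (\ref{KroMorphism2}) have matching shapes. Once $g$ is known to be a morphism of PRO, the universal property yields $g=\mu\hat\odot\mu'$, and evaluating at an arbitrary $\mathfrak p\in\mathcal F(\mathcal X)$ gives $(\mu\hat\odot\mu')(\mathfrak p)=\mu(\mathfrak p)\odot\mu'(\mathfrak p)$, as desired.
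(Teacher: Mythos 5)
Your proof is correct, and it rests on exactly the same substance as the paper's: the compatibility identities (\ref{KroMorphism1}) and (\ref{KroMorphism2}) of $\odot$ with $\leftrightarrow$ and $\updownarrow$, plus the computation $\mathbf I(M)^{\leftrightarrow n}\odot\mathbf I(N)^{\leftrightarrow n}=\mathbf I(MN)^{\leftrightarrow n}$ for the units. The only difference is organizational: the paper runs an explicit structural induction on the element $\mathfrak x$ (base case generators and wires, inductive step through the two compositions via those identities), whereas you verify once and for all that $\mathfrak p\mapsto\mu(\mathfrak p)\odot\mu'(\mathfrak p)$ is a morphism of PROs and then let the uniqueness clause of the universal property do the work. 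These are two packagings of the same argument --- the uniqueness statement you invoke is itself what a structural induction over $\mathcal F(\mathcal X)$ establishes --- but your version is arguably tidier, since it never needs to discuss how a particular circuit decomposes into elementary pieces, and it makes explicit where freeness (as opposed to mere generation) is used.
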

\begin{proof}
The result is proved by induction using equation (\ref{KroMorphism1}) and (\ref{KroMorphism2}).
\end{proof}
In this section, we focus on representation of Circuit PROs. Consider a set of chips $\mathcal C=\bigcup_{n,m\geq 1}\mathcal C_{n,m}$. 
Any graded map $\mu:\mathcal C\rightarrow\mathbb K(N)$ defines a unique 
morphism of PROs, 
$\mu:\mathcal Circ(\mathcal C)\rightarrow \mathbb K(N)$.\\


\subsection{Quasi direct sum  of representations of circuit PROs\label{RepQuasi}}
The \emph{quasi direct sum} of two representations $\mu:\mathcal Circ(\mathcal C)\rightarrow\mathbb K(M)$ and 
$\mu':\mathcal Circ(\mathcal C)\rightarrow\mathbb K(N)$ is the unique representation 
$\mu\hat\oplus\mu':\mathcal Circ(\mathcal C)\rightarrow\mathbb K(M+N)$ satisfying $(\mu\hat\oplus\mu')(\mathfrak c)=\mu(\mathfrak c)\hat\oplus\mu'(\mathfrak c)$ for each $\mathfrak c\in\mathcal C$.\\
Note that in general
\[
(\mu\hat\oplus\mu')(\mathfrak c\leftrightarrow \mathfrak c')\neq \mu(\mathfrak c\leftrightarrow \mathfrak c')\hat\oplus \mu'(\mathfrak c\leftrightarrow \mathfrak c').
\]

\begin{example}\rm
Let $\mu^0$ and $\mu^1$ be two $2$-dimensional representations. One has
\[
\mu^0(\multimapdotbothBvert\multimapdotbothBvert)=\mu^1(\multimapdotbothBvert\multimapdotbothBvert)=\left[\begin{array}{c|c|c|c}
1& 0&0&0\\\hline
0&1&0&0\\\hline
0&0&1&0\\\hline
0&0&0&1
\end{array}\right].
\]
Hence,
\begin{equation}\label{mumu}
\mu^0(\multimapdotbothBvert\multimapdotbothBvert)\hat\oplus\mu^1(\multimapdotbothBvert\multimapdotbothBvert)=\begin{array}{l}\ \\\ ^{11}\\\ ^{12}\\\ ^{13}\\\  ^{14}\\
\ ^{21}\\\ ^{22}\\\ ^{23}\\\  ^{24}\\
\ ^{31}\\\ ^{32}\\\ ^{33}\\\  ^{34}\\
\ ^{41}\\\ ^{42}\\\ ^{43}\\\  ^{44}\end{array}
\left[\begin{array}{c|c|c|c|c|c|c|c|c|c|c|c|c|c|c|c}\displaystyle\mathop1^{11}&\displaystyle\mathop0^{12}&\displaystyle\mathop0^{13}&\displaystyle\mathop0^{14}&\displaystyle\mathop0^{21}&\displaystyle\mathop0^{22}&\displaystyle\mathop0^{23}&\displaystyle\mathop0^{24}&\displaystyle\mathop0^{31}&\displaystyle\mathop0^{32}&\displaystyle\mathop0^{33}&\displaystyle\mathop0^{34}&\displaystyle\mathop0^{41}&\displaystyle\mathop0^{42}&\displaystyle\mathop0^{43}&\displaystyle\mathop0^{44}\\\hline
0&1&0&0&0&0&0&0&0&0&0&0&0&0&0&0\\\hline
0&0&\color{red}\mathbf0&0&0&0&0&0&0&0&0&0&0&0&0&0\\\hline
0&0&0&\color{red}\mathbf0&0&0&0&0&0&0&0&0&0&0&0&0\\\hline
0&0&0&0&1&0&0&0&0&0&0&0&0&0&0&0\\\hline
0&0&0&0&0&1&0&0&0&0&0&0&0&0&0&0\\\hline
0&0&0&0&0&0&\color{red}\mathbf0&0&0&0&0&0&0&0&0&0\\\hline
0&0&0&0&0&0&0&\color{red}\mathbf0&0&0&0&0&0&0&0&0\\\hline
0&0&0&0&0&0&0&0&\color{red}\mathbf0&0&0&0&0&0&0&0\\\hline
0&0&0&0&0&0&0&0&0&\color{red}\mathbf0&0&0&0&0&0&0\\\hline
0&0&0&0&0&0&0&0&0&0&1&0&0&0&0&0\\\hline
0&0&0&0&0&0&0&0&0&0&0&1&0&0&0&0\\\hline
0&0&0&0&0&0&0&0&0&0&0&0&\color{red}\mathbf0&0&0&0\\\hline
0&0&0&0&0&0&0&0&0&0&0&0&0&\color{red}\mathbf0&0&0\\\hline
0&0&0&0&0&0&0&0&0&0&0&0&0&0&1&0\\\hline
0&0&0&0&0&0&0&0&0&0&0&0&0&0&0&1
\end{array}\right].
\end{equation}
Whilst
\[
\mu^0\hat\oplus\mu^1(\multimapdotbothBvert\multimapdotbothBvert)=
\mu^0\hat\oplus\mu^1(\multimapdotbothBvert)\leftrightarrow \mu^0\hat\oplus\mu^1(\multimapdotbothBvert)=\left(\hmat \delta IJ\right)_{I\in[4]^2,J\in[4]^2}.
\]
Hence, we verify easily that $\mu^0\hat\oplus\mu^1(\multimapdotbothBvert\multimapdotbothBvert)\neq 
\mu^0(\multimapdotbothBvert\multimapdotbothBvert)\hat\oplus\mu^1(\multimapdotbothBvert\multimapdotbothBvert)$ (look at the diagonal in equality (\ref{mumu})).
\end{example}

\medskip
Nevertheless, Proposition \ref{Moplus} implies that if $(\mu\hat\oplus\mu')(\mathfrak c)=\mu(\mathfrak c)\hat\oplus\mu'(\mathfrak c)$ and
$(\mu\hat\oplus\mu')(\mathfrak c')=\mu(\mathfrak c')\hat\oplus\mu'(\mathfrak c')$ then
\begin{equation}
(\mu\hat\oplus\mu')\left(\begin{array}{c}\mathfrak c\\\updownarrow\\ \mathfrak c'\end{array}\right)= \mu\left(\begin{array}{c}\mathfrak c\\\updownarrow \\\mathfrak c'\end{array}\right)\hat\oplus \mu'\left(\begin{array}{c}\mathfrak c\\\updownarrow \\\mathfrak c'\end{array}\right).
\end{equation}
More generally on has
\begin{theorem}\label{sumRep}
Let $\mu^0:\mathcal Circ(\mathcal C)\rightarrow \mathbb K(N_0)$  and $\mu^1:\mathcal Circ(\mathcal C)\rightarrow \mathbb K(N_1)$ 
be two representations.
For any  connected circuit $\mathfrak c\in \mathcal Circ(\mathcal C)_{m,n}$,
 one has \begin{equation}(\mu^0\hat\oplus\mu^1)(\mathfrak c)=\mu^0(\mathfrak c)\hat\oplus\mu^1(\mathfrak c).\end{equation}
\end{theorem}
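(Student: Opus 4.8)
The plan is to induct on the number of elementary circuits (chips and wires) making up $\mathfrak c$, exploiting the structural description of connected circuits in Proposition \ref{Pconnected}. The base cases are immediate. If $\mathfrak c$ is a chip, the identity holds by the very definition of $\mu^0\hat\oplus\mu^1$, which is the unique representation agreeing with $\mathfrak c\mapsto\mu^0(\mathfrak c)\hat\oplus\mu^1(\mathfrak c)$ on chips. If $\mathfrak c$ is a wire $\multimapdotbothBvert=Id_1$, then every representation sends it to the $(1,1)$ vertical unit, so $(\mu^0\hat\oplus\mu^1)(\multimapdotbothBvert)=\mathbf I_1(N_0+N_1)$, whereas $\mu^0(\multimapdotbothBvert)\hat\oplus\mu^1(\multimapdotbothBvert)=\mathbf I_1(N_0)\hat\oplus\mathbf I_1(N_1)$; these agree by Equation (\ref{IoplusI}).

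For the inductive step I would take a connected, non-elementary circuit and write it, via Proposition \ref{Pconnected}, as
\[
\mathfrak c=\begin{array}{c}(\mathfrak p_1\leftrightarrow\cdots\leftrightarrow\mathfrak p_k)\\\updownarrow\\(\mathfrak q_1\leftrightarrow\cdots\leftrightarrow\mathfrak q_\ell)\end{array},
\]
where the $\mathfrak p_i\in\mathcal Circ(\mathcal C)_{m_i,n_i}$ and $\mathfrak q_j\in\mathcal Circ(\mathcal C)_{p_j,q_j}$ are connected circuits for which $n_1+\cdots+n_i=p_1+\cdots+p_j$ forces $i=k$ and $j=\ell$. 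Each $\mathfrak p_i$ and $\mathfrak q_j$ is a strictly smaller connected circuit, so the induction hypothesis applies to them. Since $\mu^0\hat\oplus\mu^1$ is a morphism of PRO it respects $\leftrightarrow$ and $\updownarrow$; distributing it over the decomposition and then replacing each $(\mu^0\hat\oplus\mu^1)(\mathfrak p_i)$ by $\mu^0(\mathfrak p_i)\hat\oplus\mu^1(\mathfrak p_i)$ (and similarly for the $\mathfrak q_j$) expresses $(\mu^0\hat\oplus\mu^1)(\mathfrak c)$ as a vertical composite of horizontal composites of quasi-direct sums.

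At this point the plan is to apply Theorem \ref{GenIdoplus} with $\mathbf A_\alpha(\epsilon)=\mu^\epsilon(\mathfrak p_\alpha)$ and $\mathbf B_\beta(\epsilon)=\mu^\epsilon(\mathfrak q_\beta)$ to pull the quasi-direct sum outside the whole composition, obtaining
\[
(\mu^0\hat\oplus\mu^1)(\mathfrak c)=\begin{array}{c}(\mu^0(\mathfrak p_1)\leftrightarrow\cdots\leftrightarrow\mu^0(\mathfrak p_k))\\\updownarrow\\(\mu^0(\mathfrak q_1)\leftrightarrow\cdots\leftrightarrow\mu^0(\mathfrak q_\ell))\end{array}\hat\oplus\begin{array}{c}(\mu^1(\mathfrak p_1)\leftrightarrow\cdots\leftrightarrow\mu^1(\mathfrak p_k))\\\updownarrow\\(\mu^1(\mathfrak q_1)\leftrightarrow\cdots\leftrightarrow\mu^1(\mathfrak q_\ell))\end{array},
\]
after which the morphism property of $\mu^0$ and $\mu^1$ identifies the two summands as $\mu^0(\mathfrak c)$ and $\mu^1(\mathfrak c)$, closing the induction. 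I expect the only real obstacle to be checking that the hypotheses of Theorem \ref{GenIdoplus} are met by this particular decomposition: its condition \ref{enumcond1} is word-for-word the arithmetic condition furnished by Proposition \ref{Pconnected}, and the positivity requirement $n_\alpha,p_\beta>0$ follows because every connected circuit has at least one input and one output (the unique circuit with an empty input or output being $\oslash$, which is not connected). Once this alignment is in place, the rest is a formal consequence of the morphism property together with the previously established identities for $\hat\oplus$.
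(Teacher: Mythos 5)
Your proof is correct and follows essentially the same route as the paper's: structural induction via point 3 of Proposition \ref{Pconnected}, with the chip and wire base cases (the latter settled by Equation (\ref{IoplusI})), the morphism property to distribute $\mu^0\hat\oplus\mu^1$ over the decomposition, and Theorem \ref{GenIdoplus} to pull the quasi-direct sum outside. Your explicit verification of the hypotheses of Theorem \ref{GenIdoplus} (in particular the positivity $n_\alpha,p_\beta>0$, which the paper leaves implicit) is a welcome addition rather than a deviation.
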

\begin{proof}
We prove the result by induction following the point 3 of Proposition \ref{Pconnected}.\\
If $\mathfrak c$ is a chip then the property is straightforward from the definition.\\
If $\mathfrak c=\multimapdotbothBvert$ then $\mu^0(\multimapdotbothBvert)=\mathbf I(N_{0})$, 
$\mu^1(\multimapdotbothBvert)=\mathbf I(N_{1})$ and 
$$(\mu^0\hat\oplus\mu^1)(\multimapdotbothBvert)=\mathbf I(N_{0}+N_{1})=\mathbf I(N_{0})\hat\oplus\mathbf I(N_{1})=
\mu^0(\multimapdotbothBvert)\hat\oplus \mu^1(\multimapdotbothBvert),$$ from  equality (\ref{IoplusI}).\\
For the other cases, $\mathfrak c$ admits an expression as
\[
\mathfrak c=\begin{array}{c}\mathfrak p_1\leftrightarrow\cdots\leftrightarrow\mathfrak p_k\\\updownarrow \\\mathfrak q_1\leftrightarrow\cdots\leftrightarrow\mathfrak q_\ell\end{array},
\]
such that for each $1\leq i \leq k$ and $1\leq j\leq \ell$, $\mathfrak p_i\in\mathcal Circuit(\mathcal C)_{m_i,n_i}$ and 
$\mathfrak q_j\in\mathcal Circ(\mathcal C)_{p_j,q_j}$ are connected circuits and $n_1+\cdots+n_i=p_1+\cdots p_j$ implies $i=k$ and $j=\ell$.
 By induction, for each $i$ and $j$, 
 $(\mu^0\hat\oplus\mu^1)(\mathfrak p_i)=\mu^0(\mathfrak p_i)\hat\oplus\mu^1(\mathfrak p_i)$ and  
 $(\mu^0\hat\oplus\mu^1)(\mathfrak q_j)=\mu^0(\mathfrak q_j)\hat\oplus\mu^1(\mathfrak q_j)$.\\

Hence,
\[
(\mu^0\hat\oplus\mu^1)(c)=\begin{array}{c}\mu^0(\mathfrak p_1)\hat\oplus\mu^1(\mathfrak p_1)\leftrightarrow\cdots\leftrightarrow\mu^0(\mathfrak p_k)\hat\oplus\mu^1(\mathfrak p_k)\\\updownarrow \\\mu^0(\mathfrak q_1)\hat\oplus\mu^1(\mathfrak q_1)\leftrightarrow\cdots\leftrightarrow\mathfrak \mu^0(\mathfrak q_\ell)\hat\oplus\mu^1(\mathfrak q_\ell)\end{array}.
\]
From Theorem \ref{GenIdoplus} we obtain
\[
(\mu^0\hat\oplus\mu^1)(c)=\begin{array}{c}\mu^0(\mathfrak p_1)\leftrightarrow\cdots\leftrightarrow\mu^0(\mathfrak p_k)\\\updownarrow \\\mu^0(\mathfrak q_1)\leftrightarrow\cdots\leftrightarrow\mathfrak \mu^0(\mathfrak q_\ell)\end{array}\hat\oplus\begin{array}{c}\mu^1(\mathfrak p_1)\leftrightarrow\cdots\leftrightarrow\mu^1(\mathfrak p_k)\\\updownarrow \\\mu^1(\mathfrak q_1)\leftrightarrow\cdots\leftrightarrow\mathfrak \mu^1(\mathfrak q_\ell)\end{array}=\mu^0(\mathfrak c)\hat\oplus\mu^1(\mathfrak c).
\]
\end{proof}

\subsection{From paths to representations\label{RepPath}}

Let $\mu:\mathcal Circ(\mathcal C)\longrightarrow \mathbb K(N)$ be a representation of a circuit PRO.
 We  denote by $\hmat{\mu(\mathfrak p)}IJ$ the entries of the hypermatrix $\mu(\mathfrak p)$ 
 for any circuit $\mathfrak p\in \mathcal F(\mathcal X)$.\\
We define the map $\tilde \mu: \mathcal Paths(\mathcal C,N)\longrightarrow \mathbb K$ by
\begin{itemize}
\item $\tilde\mu\left(\begin{array}{c}I\\\mathfrak c\\J\end{array}\right)=\hmat{\mu(\mathfrak c)}IJ$ for $\mathfrak c\in\mathcal C$
\item $\tilde\mu\left(\begin{array}{c}\mathfrak p\\\updownarrow\\\mathfrak q\end{array}\right)={
\tilde\mu}(\mathfrak p){\tilde\mu}(\mathfrak q)$ when $\mathtt{Out}(\mathfrak q)=\mathtt{In}(\mathfrak p)$.
\item $\tilde\mu\left(\mathfrak p\leftrightarrow\mathfrak p'\right)={\tilde \mu}(\mathfrak p){\tilde\mu}(\mathfrak p').$
\end{itemize}
\begin{proposition}\label{rep2path}
We have
$$\hmat{\mu(\mathfrak p)}IJ=\mathop\sum\tilde\mu\left(\mathfrak q\right)$$
where the sum is over the paths $\mathfrak q\in\mathcal Paths(\mathcal C,N)$ such that ${u\left(\mathfrak q\right)=\mathfrak p}$, $\mathtt{Out}(\mathfrak q)=I$, and $\mathtt{In}(\mathfrak q)=J$
.\end{proposition}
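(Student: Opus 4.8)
The plan is to argue by induction on the recursive construction of the circuit $\mathfrak p$, using that $\mu$ is a morphism of PROs (so it commutes with $\leftrightarrow$ and $\updownarrow$), the explicit entrywise formulas for the two operations on hypermatrices, the induction hypothesis, and the set equalities of Lemma~\ref{Path2Circ}. Throughout, $I$ records the output labels and $J$ the input labels, matching the convention under which $\mu(\mathfrak p)\in\mathbb K(N,m,n)$ has entries $\hmat{\mu(\mathfrak p)}IJ$ with $I\in[N]^m$, $J\in[N]^n$. I first dispose of the base cases. For $\mathfrak p=\oslash$ one has $\mu(\oslash)=\mathbf I_0(N)=1$, while the only path $\mathfrak q$ with $u(\mathfrak q)=\oslash$ is $\oslash$ itself, with $\tilde\mu(\oslash)=1$ (the empty product), so both sides equal $1$. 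For $\mathfrak p=\multimapdotbothBvert$ one has $\mu(\multimapdotbothBvert)=\mathbf I(N)$, whence $\hmat{\mu(\multimapdotbothBvert)}{[i]}{[j]}=\delta_{ij}$; the paths $\mathfrak q$ with $u(\mathfrak q)=\multimapdotbothBvert$ are the labelled wires $\multimapdotbothBvert^k_k$, each forced to have $\tilde\mu$-value $1$, and exactly one of them meets the boundary condition $\mathtt{Out}(\mathfrak q)=[i]$, $\mathtt{In}(\mathfrak q)=[j]$ — namely when $i=j$ — so the path sum is again $\delta_{ij}$. For $\mathfrak p=\mathfrak c$ a chip there is a single path above $\mathfrak c$ with prescribed boundary, and the claim is precisely the defining equation of $\tilde\mu$ on chips.

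For the horizontal inductive step, write $\mathfrak p=\mathfrak p_1\leftrightarrow\mathfrak p_2$. Since $\mu$ is a morphism, $\mu(\mathfrak p)=\mu(\mathfrak p_1)\leftrightarrow\mu(\mathfrak p_2)$, and the definition of $\leftrightarrow$ on hypermatrices gives, for the splitting $I=I_1I_2$, $J=J_1J_2$ dictated by the arities,
\[
\hmat{\mu(\mathfrak p)}IJ=\hmat{\mu(\mathfrak p_1)}{I_1}{J_1}\,\hmat{\mu(\mathfrak p_2)}{I_2}{J_2}.
\]
I would substitute the induction hypothesis into each factor, expand the product of the two path sums, and use multiplicativity of $\tilde\mu$ under $\leftrightarrow$ to rewrite each cross term $\tilde\mu(\mathfrak q_1)\tilde\mu(\mathfrak q_2)$ as $\tilde\mu(\mathfrak q_1\leftrightarrow\mathfrak q_2)$. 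By part~1 of Lemma~\ref{Path2Circ}, the pairs $(\mathfrak q_1,\mathfrak q_2)$ indexing this expansion are in bijection with the paths $\mathfrak q$ above $\mathfrak p_1\leftrightarrow\mathfrak p_2$ satisfying $\mathtt{Out}(\mathfrak q)=I$ and $\mathtt{In}(\mathfrak q)=J$, which is exactly the desired sum.

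For the vertical inductive step, write $\mathfrak p=\mathfrak p'\updownarrow\mathfrak p''$ with $\mathfrak p'\in\mathcal Circ(\mathcal C)_{m,n}$ and $\mathfrak p''\in\mathcal Circ(\mathcal C)_{n,p}$. Then $\mu(\mathfrak p)=\mu(\mathfrak p')\updownarrow\mu(\mathfrak p'')$, and the definition of $\updownarrow$ on hypermatrices gives
\[
\hmat{\mu(\mathfrak p)}IJ=\sum_{K\in[N]^n}\hmat{\mu(\mathfrak p')}IK\,\hmat{\mu(\mathfrak p'')}KJ.
\]
Substituting the induction hypothesis into both factors and invoking multiplicativity of $\tilde\mu$ under $\updownarrow$ turns each product $\tilde\mu(\mathfrak q')\tilde\mu(\mathfrak q'')$ into $\tilde\mu(\mathfrak q'\updownarrow\mathfrak q'')$ precisely when $\mathtt{Out}(\mathfrak q'')=\mathtt{In}(\mathfrak q')=K$. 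Summing over $K$ and over the two path families thus amounts to summing over all connectable pairs $(\mathfrak q',\mathfrak q'')$, which by part~2 of Lemma~\ref{Path2Circ} are exactly the paths $\mathfrak q$ above $\mathfrak p'\updownarrow\mathfrak p''$ with the prescribed outer labels $I$, $J$.

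I expect the vertical step to be the crux: it is the only place where an internal summation appears, and the point to verify carefully is that the free intermediate index $K\in[N]^n$ of the hypermatrix product matches exactly the inner connection labels that part~2 of Lemma~\ref{Path2Circ} allows to range freely. The remaining work is routine bookkeeping. Since every circuit is obtained from elementary circuits by finitely many juxtapositions and connections, the induction is well-founded; and because the right-hand side of the claim depends only on $\mathfrak p$, $I$ and $J$, the conclusion is independent of the particular decomposition chosen at each step.
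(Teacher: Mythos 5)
Your proof is correct and follows essentially the same route as the paper's: induction on the recursive structure of the circuit, with the chip case from the definition of $\tilde\mu$, and the horizontal and vertical composite cases handled via the entrywise formulas for $\leftrightarrow$ and $\updownarrow$ together with parts 1 and 2 of Lemma~\ref{Path2Circ} respectively. Your explicit treatment of the base cases $\oslash$ and the wire (with $\tilde\mu$ forced to equal $1$ on them) is slightly more careful than the paper, which only records the chip case, but this is a matter of completeness rather than a different argument.
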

\begin{proof}
We prove the result by induction. We have to consider three cases
\begin{itemize}
\item If $\mathfrak p\in\mathcal C$ then the proposition is straightforward  from the definition. 
\item If $\mathfrak p=\mathfrak p_1 \leftrightarrow\mathfrak p_2$, with $\mathfrak p_1\in \mathcal Circuit(\mathcal C)_{m_1,n_1}$, 
$\mathfrak p_2\in \mathcal Circuit(\mathcal C)_{m_2,n_2}$, and $\mathfrak p_1,\mathfrak p_2\neq\mathfrak p$, then
\[
\hmat{\mu(\mathfrak p_1 \leftrightarrow\mathfrak p_2)}{I_1I_2}{J_1J_2}=
\hmat{\mu(\mathfrak p_1)}{I_1}{J_1} \hmat{\mu(\mathfrak p_2)}{I_2}{J_2}
\]
for any $I_1\in[N]^{m_1}$, $I_2\in[N]^{m_2}$, $I_1\in[N]^{n_1}$, and $J_2\in[N]^{n_2}$. By induction, one obtains
\[
\hmat{\mu(\mathfrak p_1)}{I_1}{J_1}\leftrightarrow \hmat{\mu(\mathfrak p_2)}{I_2}{J_2}=
\sum\tilde\mu\left(\mathfrak q_1\right)\tilde\mu\left(\mathfrak q_2\right)=
\sum\tilde\mu\left(\mathfrak q_1\leftrightarrow \mathfrak q_2\right)
\]
where the sum is over the pairs of paths $\mathfrak q_1,\mathfrak 
q_2\in\mathcal Paths(\mathcal C,N)$ satisfying ${u\left(\mathfrak q_1\right)=\mathfrak p_1}$, $\mathtt{Out}(\mathfrak q_1)=I_1$, $\mathtt{In}(\mathfrak q_1)=J_1$, ${u\left(\mathfrak q_2\right)=\mathfrak p_2}$, $\mathtt{Out}(\mathfrak q_2)=I_2$, and $\mathtt{In}(\mathfrak q_2)=J_2$. We deduce our result from Lemma \ref{Path2Circ}.\\
\item If $\mathfrak p=\begin{array}{rcl}\mathfrak p'\\ \updownarrow\\\mathfrak p'\end{array}$ with $\mathfrak p'\in \mathcal Circ(\mathcal C)_{m,n}$, 
$\mathfrak p''\in \mathcal Circ(\mathcal C)_{n,p}$, and $\mathfrak p',\mathfrak  p''\neq \mathfrak p$, then one has
\[
\hmat{\mu\left(\mathfrak p\right)}{I}J=\displaystyle\mathop\sum_K\hmat{\mu\left(\mathfrak p'\right)}{I}K
\hmat{\mu\left(\mathfrak p''\right)}{K}I
\]
By induction, one has
\[
\hmat{\mu\left(\mathfrak p\right)}{I}J=\displaystyle\mathop\sum_K\mathop\sum_{\mathfrak q',\mathfrak q''} \tilde\mu\left(\mathfrak q'\right)
\tilde\mu\left(\mathfrak q''\right)
\]
where the second sum is over the pairs of paths $\mathfrak q',\mathfrak q''\in\mathcal Paths(\mathcal C,N)$ 
satisfying ${u\left(\mathfrak q'\right)=\mathfrak p'}$, $\mathtt{Out}(\mathfrak q')=I$, $\mathtt{In}(\mathfrak q')=K$,
 ${u\left(\mathfrak q''\right)=\mathfrak p''}$, $\mathtt{Out}(\mathfrak q'')=K$, and $\mathtt{In}(\mathfrak q'')=J$. Hence,
\[
\hmat{\mu\left(\mathfrak p\right)}{I}J=\displaystyle\mathop\sum_{\mathfrak q',\mathfrak q''} \tilde\mu\left(\begin{array}{c}\mathfrak q'\\\updownarrow\\
\mathfrak q''\end{array}\right)
\]
where the sum is over the pairs of paths $\mathfrak q',\mathfrak 
q''\in\mathcal Paths(\mathcal C,N)$ satisfying ${u\left(\mathfrak q'\right)=\mathfrak p'}$, $\mathtt{Out}(\mathfrak q')=I$, 
${u\left(\mathfrak q''\right)=\mathfrak p''}$, $\mathtt{Out}(\mathfrak q'')=\mathtt{In}(\mathfrak q')$, and $\mathtt{In}(\mathfrak q'')=J$. Again, Lemma \ref{Path2Circ} allows us to conclude.

\end{itemize}
\end{proof}
\begin{example}\rm
	Let  $\mathcal C=\mathcal C_{1,2}\cup\mathcal C_{2,2}$ with $\mathcal C_{2,2}=\left\{
\tikdanseq { \Apro 00{0.25}{0.25}00a{green}{a}}\right\}$ and $\mathcal C_{1,2}=\left\{
\tikdanseq { \Apro 00{0.25}{0.25}00b{cyan}{b}}\right\}$.
Let  $\mu:\mathcal Circuit(C)\longrightarrow\mathbb K(3)$ be the representation satisfying
\begin{itemize}
\item $\mu\left(\tikdanseq { \Apro 00{0.25}{0.25}00a{green}{b}}\right)=\mathbf A$ where
 $\displaystyle\mathop{ \mathbf A}_{2,3}^{2,3}=x$, 
$\displaystyle\mathop {\mathbf A}_{2,3}^{3,3}=x'$ and $\displaystyle\mathop{\mathbf  A}_{i,j}^{k,l}=0$ otherwise.
\item $\mu\left(\tikdanseq { \Apro 00{0.25}{0.25}00b{cyan}{b}}\right)=B$ where $\displaystyle\mathop{\mathbf B}_{2}^{1,3}=y
$, $\displaystyle\mathop{\mathbf B}_{3}^{1,3}=y'
$, and $\displaystyle\mathop{\mathbf B}_{i}^{j,k}=0$ in the other cases.
\end{itemize} 
Proposition \ref{rep2path} means that the representation $\mu$ 
can be graphically represented by an hypergraph as in figure \ref{ExCirc}, where only the non zero transition are drawn, 
and that the image of circuit is obtained by summing over all the generalized paths.
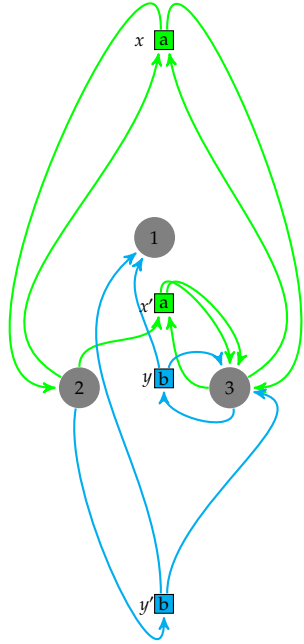
\begin{figure}[h]
\begin{center}
\begin{tikzpicture}
\node[circle,fill=gray](v1) at (1,2){1};
\node[circle,fill=gray](v2) at (0,0){2};
\node[circle,fill=gray](v3) at (2,0){3};
\Apro 11{0.25}{0.25}22a{green}{b1}
\draw[->,green,thick](uub11) to[out=90,in=90] (v3);
\draw[->,green,thick](uub12) to[out=90,in=70] (v3);
\draw[->,green,thick](v2)  to[out=90,in=260] (ddb11);
\draw[->,green,thick](v3)  to[out=180,in=280] (ddb12);

\Apro {1}{4.5}{0.25}{0.25}22a{green}{b2}
\draw[->,green,thick](uub21) to[out=90,in=180] (v2);
\draw[->,green,thick](uub22) to[out=90,in=0] (v3);
\draw[->,green,thick](v2)  to[out=150,in=260] (ddb21);
\draw[->,green,thick](v3)  to[out=30,in=280] (ddb22);

\Apro {1}{0}{0.25}{0.25}21b{cyan}{c}
\draw[->,cyan,thick](uuc1) to [out=100,in=240] (v1);
\draw[->,cyan,thick](uuc2) to [out=80,in=110] (v3);
\draw[->,cyan,thick](v3)  to [out=280,in=270] (ddc1);

\Apro {1}{-3}{0.25}{0.25}21b{cyan}{c2}
\draw[->,cyan,thick](uuc21) to [out=90,in=220] (v1);
\draw[->,cyan,thick](uuc22) to [out=90,in=350] (v3);
\draw[->,cyan,thick](v2)  to [out=260,in=270] (ddc21);

\node (p4) at (0.8,4.6) {$x$};
\node (p5) at (0.9,1.1) {$x'$};
\node (p6) at (0.9,0.1) {$y$};
\node (p8) at (0.9,-2.9) {$y'$};
\end{tikzpicture}
\end{center}
\caption{An example of hypergraph.\label{ExCirc}}
\end{figure}

For instance, we consider the following  entry:
\begin{equation}\begin{array}{c}\ 1,3,1,3\\\mu\left(\tikdanseq {
\Apro {1}{0.5}{0.25}{0.25}22a{green}{b}
\Apro {0.5}{1}{0.25}{0.25}21b{cyan}{c1}
\Apro {1.5}{1}{0.25}{0.25}21b{cyan}{c2}
\draw (uub1)--(ddc11);
\draw (uub2)--(ddc21);
}\right)\\\ 2,3\end{array}=\tilde\mu\left({}
\begin{array}{c}1,3,1,3\\
\tikdanseq{
\Apro {0}{0}{0.25}{0.25}22a{green}{b}
\Apro {-0.5}{0.5}{0.25}{0.25}21b{cyan}{c1}
\Apro {0.5}{0.5}{0.25}{0.25}21b{cyan}{c2}
\draw (uub1)--(ddc11);
\draw (uub2)--(ddc21);
\node(n1) at (-.25,0.25){3};
\node(n1) at (0.45,0.25){3};
}\\2,3\end{array}
\right)+
\tilde\mu\left({}
\begin{array}{c}1,3,1,3\\
\tikdanseq{
\Apro {0}{0}{0.25}{0.25}22a{green}{b}
\Apro {-0.5}{0.5}{0.25}{0.25}21b{cyan}{c1}
\Apro {0.5}{0.5}{0.25}{0.25}21b{cyan}{c2}
\draw (uub1)--(ddc11);
\draw (uub2)--(ddc21);
\node(n1) at (-.25,0.25){2};
\node(n1) at (0.45,0.25){3};
}\\2,3\end{array}
\right)=x'y'^{2}+xy'y.
\end{equation}.
\end{example}{}

\section{Toward a universal definition of automata\label{automata}}
Finite state automata belong to the large class of abstract machines studied in the context of information theory and formal languages.
In particular, they play an important role in the Chomsky hierarchy \cite{Chomsky,CS} since they characterize regular languages \cite{Kleene}.
 More precisely, automata provide an acceptance mechanism for words and the set of the words accepted by a given automaton is its recognized language. 
 According to  Kleene's theorem \cite{Kleene} the set of languages which are recognized by automata is the smallest set containing 
 the subsets of symbols and closed by union, concatenation and star.
 The first occurrence of this concept in literature
dates back to the paper of  McCulloch and Pitt \cite{MP} but the notion of non deterministic finite automaton (NFA) was formally 
introduced by Rabin and Scott \cite{RS}.  An automaton is a graph based machine with states and transitions labeled with letters.
More precisely, an automaton is a quintuple $(\Sigma,Q,I,F,\delta)$ where $\Sigma$ is the alphabet labeling transitions,
 $Q$ is the set of the states, some of states are called initial 
($I\subset Q$), some others are called final ($F\subset Q$), and $\delta$ is the transition function which sends each pair of $Q\times \Sigma$ to a subset of $Q$. A word is recognized by an automaton if starting from an initial state,
one can reach  a final state following a path labeled with the sequence of the letters of the word. In order to introduce weight on transitions,
it is easier to consider automata as linear representations \cite{Schutz}. More precisely, an automaton with weight taken in a semiring $\mathbb K$
is a triplet $\mathtt A=(\lambda,\rho,\gamma)$ where $\lambda\in\mathbb K^{1\times N}$, $\gamma\in\mathbb K^{N\times 1}$ and $\rho$ is a morphism of monoid from the free monoid $\Sigma^{*}$ to $\mathbb K^{N\times N}$, each $\rho(a)$, for $a\in\Sigma$, is the adjacency matrix associated to
the weighted subgraph whose edges are labeled with $a$ ($N$ is the number of states and $\rho$ encodes the transitions). The automaton $\mathtt A$ associates to each word $w\in\Sigma$ the scalar $\lambda\rho(w)\gamma\in\mathbb K$. This number
is also the sum of all the value obtained by reading the paths labeled with $w$.  Weighted tree automata \cite{FVW} are another example of finite state machines 
but instead of
produce a scalar from a word, a tree automaton associates a scalar to each tree. There exist two kind of tree automata: bottom-up and top-down depending on the trees being read from the leaves to the root or the root to the leaves.
In both cases, the computation has in commons with that of classical automaton the fact that it is based on both  (multi)linear representations and an appropriate notion of paths.

However, the history of the theory of finite automata begins long before the work of postwar pioneers. 
Indeed, one of the important sources of inspiration was the theory of electronic circuits as it was studied from the nineteenth century.
Graphically, a finite automaton is very closed to an electronic circuit: the states of the automaton play the same role as 
the nodes of a circuit, the transitions the same role as electric components and the letter labeling the transition
 refer to the type of the component connecting the two nodes. 
 To continue the analogy, the letters of an alphabet symbolize  components with only one input and one output. The letters in a tree automaton
 can be considered as one input/ many output or many input/ one output 
 components depending on the automata being  bottom-up or top-down.\\
 The goal of this section is to show that multilinear representations of 
 PROs underly in the definition of many kind of automata.
 \subsection{Word automata\label{word}}
 Suppose that $\mathcal K$ is a commutative semiring. 
 Recall that a word automata is a triplet $\mathtt 
 A:=(\lambda,\rho,\gamma)$ 
where $\lambda\in \mathbb K^{1\times N}$, $\gamma\in \mathbb K^{N\times 
1}$, and $\rho$ is a morphism from a free monoid $A^{*}$ to
$\mathbb K^{N\times N}$.  The behavior of $\mathtt A$ is the series 
$B_{\mathtt A}=\sum_{w\in A^{*}}\lambda \rho(w)F\gamma w$.  Since $\mathbb K$ is commutative, this kind of 
automaton is easily mimicked through a multilinear representation of 
PRO. First we consider the free PROs $\mathcal F(\mathcal X)$ generated 
by $\mathcal X=\mathcal X_{1,0}\cup \mathcal X_{1,1}\cup \mathcal 
X_{0,1}$   with
$\mathcal X_{1,0}=\{\tikdanseq{
\Apro 10{0.25}{0.25}10{$\bot$}{red}{bot}}\}$, $\mathcal X_{1,1}=\{\tikdanseq{
\Apro 00{0.25}{0.25}10{$a$}{green}{bot}}:a\in A\}$ and $\mathcal X_{0,1}=\{\tikdanseq{
\Apro 10{0.25}{0.25}10{$\top$}{red}{bot}}\}$.  We consider also 
representation of PROs $\mu:\mathcal F(\mathcal X)\rightarrow \mathcal 
K(N)$ defined by 
$\mu(\tikdanseq{
\Apro 10{0.25}{0.25}10{$\bot$}{red}{bot}})=\lambda$, $\mu(\tikdanseq{
\Apro 10{0.25}{0.25}10{$\top$}{red}{bot}})=\gamma$ and $\mu(\tikdanseq{
\Apro 10{0.25}{0.25}10{$a$}{green}{bot}})=\rho(a).$
We assimilate a word to an element of $\mathcal F(\mathcal X)$ through 
the correspondence

 \begin{equation}\mathrm{circuit}(a_{1}\dots a_{n})= \begin{array}{c}
	\tikdanseq{
\Apro 10{0.25}{0.25}10{$\top$}{red}{bot}}\\\updownarrow\\\tikdanseq{
\Apro 10{0.25}{0.25}11{$a_{n}$}{green}{bot}}\\\updownarrow\\\vdots\\\updownarrow\\
\tikdanseq{
\Apro 10{0.25}{0.25}11{$a_{1}$}{green}{bot}}\\\updownarrow\\\tikdanseq{
\Apro 10{0.25}{0.25}10{$\bot$}{red}{top}}
	\end{array}. \end{equation}
With such a notation, the behavior becomes
\begin{equation}
	B_{\mathtt A}=\sum_{w\in A^{*}}\mu(\mathrm{circuit}(w))w.
\end{equation}
A formal series is \emph{recognizable} if it is the behavior of an automaton.
It is well known that if two series $S=\sum_{w}\alpha_{w}w$ and 
$S'=\sum_{w}\alpha'_{w}w$ are recognizable then their sum 
$S+S'=\sum_{w}(\alpha_{w}+\alpha_{w}')w$ and Hadamard product $S\odot 
S'=\sum_{w}(\alpha_{w}\alpha_{w}')w$ are also recognizable. This can  be
seen as a consequence of the structure of PRO.
 Indeed, since both series are recognizable, one associates a representation 
 $\mu$ with $S$ and a representation $\mu'$ to $S'$. From Corollary 
 \ref{HadamardRep}, one has
 \begin{equation}
	 S\odot 
S'=\sum_{w}(\mu(\mathrm{circuit}(w))\mu'(\mathrm{circuit}(w))w=
\sum_{w}(\mu\hat\odot\mu'(\mathrm{circuit}(w))w.
 \end{equation}
 Applying  Theorems \ref{GenIdoplus} and \ref{sumRep}, one obtains
 \begin{equation}
	 \mu\hat\oplus\mu'\left(\begin{array}{c}
	\tikdanseq{
\Apro 10{0.25}{0.25}10{$\top$}{red}{bot}}\\\updownarrow\\\tikdanseq{
\Apro 10{0.25}{0.25}11{$a_{n}$}{green}{bot}}\\\updownarrow\\\vdots\\\updownarrow\\
\tikdanseq{
\Apro 10{0.25}{0.25}11{$a_{1}$}{green}{bot}}\\\updownarrow\\\tikdanseq{
\Apro 10{0.25}{0.25}10{$\bot$}{red}{top}}
	\end{array}\right)={}
	\begin{array}{c}
	\mu\hat\oplus\mu'(\tikdanseq{
\Apro 10{0.25}{0.25}10{$\top$}{red}{bot}})\\\updownarrow\\
\mu\hat\oplus\mu'\left({}
\begin{array}{c}\tikdanseq{
\Apro 10{0.25}{0.25}11{$a_{n}$}{green}{bot}}\\
\updownarrow\\
\vdots\\
\updownarrow\\
\tikdanseq{\Apro 10{0.25}{0.25}11{$a_{1}$}{green}{bot}}
\end{array}\right){}
\\\updownarrow\\\mu\hat\oplus\mu'(\tikdanseq{
\Apro 10{0.25}{0.25}10{$\bot$}{red}{top}})
	\end{array}=
 \begin{array}{c}
	\mu(\tikdanseq{
\Apro 10{0.25}{0.25}10{$\top$}{red}{bot}})\hat\oplus\mu'(\tikdanseq{
\Apro 10{0.25}{0.25}10{$\top$}{red}{bot}})\\\updownarrow\\
\mu\left({}
\begin{array}{c}\tikdanseq{
\Apro 10{0.25}{0.25}11{$a_{n}$}{green}{bot}}\\
\updownarrow\\
\vdots\\
\updownarrow\\
\tikdanseq{\Apro 10{0.25}{0.25}11{$a_{1}$}{green}{bot}}
\end{array}\right)\hat\oplus\mu'\left({}
\begin{array}{c}\tikdanseq{
\Apro 10{0.25}{0.25}11{$a_{n}$}{green}{bot}}\\
\updownarrow\\
\vdots\\
\updownarrow\\
\tikdanseq{\Apro 10{0.25}{0.25}11{$a_{1}$}{green}{bot}}
\end{array}\right){}
\\\updownarrow\\\mu(\tikdanseq{
\Apro 10{0.25}{0.25}10{$\bot$}{red}{top}})\hat\oplus\mu'(\tikdanseq{
\Apro 10{0.25}{0.25}10{$\bot$}{red}{top}})
	\end{array}=
		 \mu\left(\begin{array}{c}
	\tikdanseq{
\Apro 10{0.25}{0.25}10{$\top$}{red}{bot}}\\\updownarrow\\\tikdanseq{
\Apro 10{0.25}{0.25}11{$a_{n}$}{green}{bot}}\\\updownarrow\\\vdots\\\updownarrow\\
\tikdanseq{
\Apro 10{0.25}{0.25}11{$a_{1}$}{green}{bot}}\\\updownarrow\\\tikdanseq{
\Apro 10{0.25}{0.25}10{$\bot$}{red}{top}}
	\end{array}\right)+	 \mu'\left(\begin{array}{c}
	\tikdanseq{
\Apro 10{0.25}{0.25}10{$\top$}{red}{bot}}\\\updownarrow\\\tikdanseq{
\Apro 10{0.25}{0.25}11{$a_{n}$}{green}{bot}}\\\updownarrow\\\vdots\\\updownarrow\\
\tikdanseq{
\Apro 10{0.25}{0.25}11{$a_{1}$}{green}{bot}}\\\updownarrow\\\tikdanseq{
\Apro 10{0.25}{0.25}10{$\bot$}{red}{top}}
	\end{array}\right).
 \end{equation}
 Hence,
 \begin{equation}
	 S+S'=\sum_{w}\mu\hat\oplus\mu'(\mathrm{circuit}(w))w,
 \end{equation}
 as expected.
 \subsection{Tree automata\label{tree}}
 For the sake of simplicity we suppose that $\mathcal K=\mathbb B$ the 
 boolean semiring but all the theory is transposable for  any other 
 commutative semiring.
 Recall that there exist two kind of tree automata: Bottom-Up and Top-Down. The two constructions being symmetrical, 
we consider here only the Bottom-Up automata.
A Bottom-Up automaton (see \emph{eg} \cite{Drewes}) is a tuple $A=(Q,\Sigma,\delta,F)$ where
\begin{itemize}
	\item $Q$ is a finite set of states (without loss of generality we assume $Q=\{1,\dots,N\}$ for some integer $N$,
	\item $\Sigma=\cup_{k}\Sigma^{(k)}$ is the ranked input alphabet 
	\item $\delta:\Sigma(Q)\rightarrow 2^Q$ is the transition function, where $\Sigma(Q)=\left\{\tikdanseq{
\node(a) at (0,0){$a$};
\node(n1) at (-0.5,0.75){$q_{1}$};
\node(n2) at (-0.2,0.75){$q_{2}$};
\node(n3) at (0.2,0.75){$\dots$};
\node(n4) at (0.5,0.75){$q_{k}$};
\draw (a)--(n1);
\draw (a)--(n2);
\draw (a)--(n4);
}
:k\in\N, a\in\Sigma^{(k)}, q_{1},\dots,q_{k}\in Q\right\}$.
	\item $F\subseteq Q$ is the set of final states.
\end{itemize}
For a tree $t=\tikdanseq{
\node(a) at (0,0){$a$};
\node(n1) at (-0.5,0.75){$t_{1}$};
\node(n2) at (-0.2,0.75){$t_{2}$};
\node(n3) at (0.2,0.75){$\dots$};
\node(n4) at (0.5,0.75){$t_{k}$};
\draw (a)--(n1);
\draw (a)--(n2);
\draw (a)--(n4);
}
$, we define \[\delta^{*}(t)=\bigcup_{q_{1}\in \delta^{*}(t_{1}),\dots,q_{k}\in \delta^{*}(t_{k})}\delta\left(\tikdanseq{
\node(a) at (0,0){$a$};
\node(n1) at (-0.5,0.75){$q_{1}$};
\node(n2) at (-0.2,0.75){$q_{2}$};
\node(n3) at (0.2,0.75){$\dots$};
\node(n4) at (0.5,0.75){$q_{k}$};
\draw (a)--(n1);
\draw (a)--(n2);
\draw (a)--(n4);
}\right).\]
The language accepted by $A$ is defined by $L(\mathtt A):=\{t: 
\delta^{*}(t)\cap F\neq\emptyset\}$. Notice that a set is nothing but a 
formal series with multiplicities in $\mathbb B$. So,
\begin{equation}
	L(\mathtt A)=\sum_{t: 
\delta^{*}(t)\cap F\neq\emptyset} t.
\end{equation}
We construct a free PRO together with a multilinear representation 
mimicking the behavior of the automaton. We consider the bigraded set 
$\mathcal X=\mathcal X_{1,0}\cup\mathcal X_{0,1}\cup\bigcup_{m,1}$ with 
 $\mathcal X_{k,1}=\{\tikdanseq{
\Apro 00{0.25}{0.25}10{$a$}{green}{a}}:a\in\Sigma^{(a)}\}$, $\mathcal 
X_{1,0}=\{\tikdanseq{
\Apro 10{0.25}{0.25}10{$\bot$}{red}{bot}}\}$, and $\mathcal X_{0,1}=\{\tikdanseq{
\Apro 10{0.25}{0.25}10{$\top$}{red}{bot}}\}$.
To each tree we associate a an element of the free Pro $\mathcal 
F(\mathcal X)$ by setting 
\begin{equation}\mathrm{circuit}(t):=\begin{array}{c}\overbrace{\tikdanseq{
\Apro 
10{0.25}{0.25}10{$\top$}{red}{bot}}\leftrightarrow\cdots\leftrightarrow{}
\tikdanseq{
\Apro 
10{0.25}{0.25}10{$\top$}{red}{bot}}}^{\#leaves(t)\mbox{ 
\footnotesize{times}}}\\\updownarrow\\
\tilde t\\\updownarrow\\
\tikdanseq{
\Apro 10{0.25}{0.25}10{$\bot$}{red}{bot}}
\end{array}\end{equation}
 where $leaves(t)$ denotes the set of the leaves of $t$ and $\tilde t$ 
 is the element of $\mathcal F(\mathcal X)$ obtained by substituting each occurrence of letters 
in $\Sigma^{(k)}$ by its corresponding symbol in $\mathcal X_{k,1}$.
 For instance,
\begin{equation}
	\mathrm{circuit}\left(
\tikdanseq{
\node(a) at (0,0){$a$};
\node(b) at (-0.5,0.75){$b$};
\node(c) at (0,0.75){$c$};
\node(vide) at (0.5,0.75){};
\draw (a)--(b);
\draw (a)--(c);
\draw (a)--(vide);
\node(vide2) at (-0.7,1.5){};
\node(vide3) at (-0.3,1.5){};
\node(vide4) at (0,1.5){};
\draw (b)--(vide2);
\draw (b)--(vide3);
\draw (c)--(vide4);
}\right)
=
\begin{array}{c}\\
\tikdanseq{
\Apro 10{0.25}{0.25}10{$\top$}{red}{bot}}\leftrightarrow
\tikdanseq{
\Apro 10{0.25}{0.25}10{$\top$}{red}{bot}}\leftrightarrow
\tikdanseq{
\Apro 10{0.25}{0.25}10{$\top$}{red}{bot}}\leftrightarrow
\tikdanseq{
\Apro 10{0.25}{0.25}10{$\top$}{red}{bot}}\\
\updownarrow\\
\tikdanseq{
\Apro {-1}{0.75}{0.25}{0.25}11{$b$}{green}{b}}\leftrightarrow 
\tikdanseq{\Apro 
{0}{0.75}{0.25}{0.25}11{$c$}{green}{c}}
\\\updownarrow\\
\tikdanseq{
\Apro 00{0.25}{0.25}11{$a$}{green}{a}}
\\\updownarrow\\
\tikdanseq{
\Apro 10{0.25}{0.25}10{$\bot$}{red}{bot}}
\end{array}
\end{equation}
Now we define the representation $\mu$ satisfying
\begin{itemize}
	\item $\displaystyle\mathop{\mu(\tikdanseq{
\Apro 10{0.25}{0.25}10{$\bot$}{red}{bot}})}^q:=1$, 
\item $\displaystyle\mathop{\mu(\tikdanseq{
\Apro 10{0.25}{0.25}10{$\top$}{red}{bot}})}_{q}=1$ if  $q\in F$ and $0$ 
otherwise,
 \item for each $a\in\Sigma^{(k)}$, we define  $\mu(\tikdanseq{
\Apro 00{0.25}{0.25}00{$a$}{green}{a}})$ such that $\begin{array}{c}q_{1},\dots,q_{k}\\\mu(\tikdanseq{
\Apro 00{0.25}{0.25}00{$a$}{green}{a}})\\q\end{array}=1$ if $q\in\delta\left(\tikdanseq{
\node(a) at (0,0){$a$};
\node(n1) at (-0.5,0.75){$q_{1}$};
\node(n2) at (-0.2,0.75){$q_{2}$};
\node(n3) at (0.2,0.75){$\dots$};
\node(n4) at (0.5,0.75){$q_{k}$};
\draw (a)--(n1);
\draw (a)--(n2);
\draw (a)--(n4);
}\right)$ and $0$ otherwise.
\end{itemize}
With this notation one has
\begin{equation}
L(\mathtt A)=\sum_{t}\mu(\mathrm{circuit}(t))t.
\end{equation}
In terms of series, the union of two languages is translated as the 
sum of the series while the intersection is the Hadamard product.
As in the case of the words, if $L$ and $L'$ are two languages 
recognized by automata then $L\cup L'$ and $L\cap L'$ are also 
recognized. This also can be seen as consequences of Theorems \ref{sumRep}
and \ref{HadamardRep}.
 \subsection{Branching automata\label{branching}}
 Branching automata are a generalisation of usual (Kleene) automata introduced by Lodaya and Weil \cite{LW1,LW2} in the aim to take into account
both sequentiality and parallelism. Notice that, this kind of automata have been recently connected to a logic named Presburger-MSO \cite{Bedon}, as expressive as branching automata.
Recall first that a branching automata is a tuple $\mathtt A=(Q,\Sigma,E,I,F)$ where  $\Sigma$ is an alphabet, $Q$ is the set of the states, 
$I, F\subset Q$ are respectively the set of initial and final states, and the transition $E$ splits into $E=(E_{seq},E_{fork},E_{joint})$ with
\begin{itemize}
	\item $E_{seq}$ are usual transitions.
	\item $E_{fork}\subset Q\times\mathcal M^{>1}(Q)$ and $E_{joint}\subset \mathcal M^{>1}(Q)\times Q$ where $\mathcal M^{>1}(Q)$ denotes the set
	of multi-sets of $Q$ with at least two elements.
\end{itemize}
We construct a representation of a free PRO in $\mathbb B(N)$.
We consider the multiset $\mathcal X=\mathcal X_{1,0}\cup\mathcal 
X_{0,1}\cup\mathcal X_{1,1}\cup\bigcup_{n\geq 2}\mathcal 
X_{1,n}\cup\bigcup_{m\geq 2}\mathcal X_{m,1}$ with $\mathcal 
X_{1,0}=\{\tikdanseq{\Apro 10{0.25}{0.25}10{$\top$}{red}{bot}}\}$,
$\mathcal X_{0,1}=\{\tikdanseq{\Apro 
01{0.25}{0.25}10{$\bot$}{red}{bot}}\}$,
$\mathcal X_{1,1}:= \{\tikdanseq{\Apro 
11{0.25}{0.25}10{$a$}{green}{bot}}:a\in\Sigma\}$,
$\mathcal X_{n,1}:= \{\tikdanseq{\Apro 11{0.25}{0.25}10{$\overline n$}{cyan}{bot}}\}$, and
	$\mathcal X_{1,m}:= \{\tikdanseq{\Apro 11{0.25}{0.25}10{$\underline 
	m$}{cyan}{bot}}\}$.
For $\mathfrak x\in\mathcal X_{1,0}\cup\mathcal X_{0,1}\mathcal 
X_{1,1}$, $\mu(\mathfrak x)$ is defined as in the case of word 
automata. For each $m, n\geq 2$ , the hypermatrices
$ \mu(\tikdanseq{\Apro 11{0.25}{0.25}10{$\overline n$}{cyan}{bot}})$ 
and
$ \mu(\tikdanseq{\Apro 11{0.25}{0.25}10{$\underline m$}{cyan}{bot}})$ 
are symmetric in the sense that they satisfy
\begin{equation}\begin{array}{c} i_{1},\dots,i_{n}\\ \mu(\tikdanseq{\Apro 11{0.25}{0.25}10{$\overline n$}{cyan}{bot}})\\i\end{array}=
	\begin{array}{c} i_{\sigma(1)},\dots,i_{\sigma(n)}\\ 
	\mu(\tikdanseq{\Apro 11{0.25}{0.25}10{$\overline 
	n$}{cyan}{bot}})\\i\end{array}\mbox{ and }\begin{array}{c} i\\ \mu(\tikdanseq{\Apro 
	 11{0.25}{0.25}10{$\underline m$}{cyan}{bot}})\\i_{1},\dots,i_{m}\end{array}=
	\begin{array}{c} i\\ \mu(\tikdanseq{\Apro 11{0.25}{0.25}10{$\underline 
	m$}{cyan}{bot}})\\i_{\tau(1)},\dots,i_{\tau(m)}\end{array}\end{equation}
for each permutations $\sigma\in\mathfrak S_{n}$ and $\tau\in\mathfrak 
S_{m}$. Furthermore,
$\begin{array}{c} i_{1},\dots,i_{n}\\ \mu(\tikdanseq{\Apro 
11{0.25}{0.25}10{$\overline n$}{cyan}{bot}})\\i\end{array}=1$ if 
$(i,\{i_{1},\dots,i_{n}\})\in E_{fork}$ and $0$ otherwise, and 
$\begin{array}{c} i\\ \mu(\tikdanseq{\Apro 
	 11{0.25}{0.25}10{$\underline 
	 m$}{cyan}{bot}})\\i_{1},\dots,i_{m}\end{array}=1$ if 
	 $(\{i_{1},\dots,i_{m}\},i)\in E_{joint}$ and $0$ otherwise.
For this kind of automata, the recognized language is assimilated to 
the formal series
\begin{equation}
	L^{//}(\mathtt A)=\sum_{\mathfrak c\in\mathcal F(\mathcal 
	X)_{1,1}}\mu\left(\begin{array}{c}\tikdanseq{\Apro 
	10{0.25}{0.25}10{$\top$}{red}{bot}}\\\updownarrow\\\mathfrak 
	c\\\updownarrow\\
\tikdanseq{\Apro 10{0.25}{0.25}10{$\bot$}{red}{bot}} 
\end{array}\right)\mathfrak c.
\end{equation}
Also notice that the notion of path in this  model of automata 
coincides with the notion of path in the hypergraph associated to a 
representation. Closure under finite union (resp. 
finite intersection) of languages recognized by branching automata can 
be seen as a consequence of Theorem \ref{sumRep} 
(resp. Corollary \ref{HadamardRep}).
\begin{example}\rm
	Consider  the branching automaton 
	$$\mathtt A=([1,6],{a,b},(\{(2,a,4),(3,b,5)\},\{(1,\{1,1\}),(1,\{2,3\})\},\{(\{6,6\},6),(\{4,5\},6)\}),\{1\},\{1,6\}).$$
	The hypergraph of the associated representation
	is drawn in figure \ref{ExBran}. After removing the symbols $\tikdanseq{\Apro 10{0.25}{0.25}10{$\top$}{red}{bot}}$
,
$\tikdanseq{\Apro 10{0.25}{0.25}10{$\bot$}{red}{bot}}$, 
$\tikdanseq{\Apro {-1.5}1{0.25}{0.25}21{$\overline 2$}{cyan}{h1}}$, and 
$\tikdanseq{\Apro {-1.5}1{0.25}{0.25}21{$\underline 2$}{cyan}{h1}}$ in the  language $L^{//}$ we obtain exactly all the elements under the forms
$\tikdanseq{\Apro {-1.5}1{0.4}{0.4}22{$a_{1}$}{green}{h1}}\leftrightarrow\cdots\leftrightarrow{}
\tikdanseq{\Apro {-1.5}1{0.4}{0.4}22{$a_{2n}$}{green}{h1}}$ 
for $n\in\N$ and $a_{i}\in\{a,b\}$ such 
that $\mathrm{card}\{i:a_{i}=a\}=\mathrm{card}\{i:a_{i}=b\}$.
	
	\begin{figure}[h]
\begin{center}
\begin{tikzpicture}
\node[circle,fill=gray](v1) at (0,0){1};
\node[circle,fill=gray](v2) at (1,2){2};
\node[circle,fill=gray](v3) at (-1,2){3};
\node[circle,fill=gray](v4) at (1,4){4};
\node[circle,fill=gray](v5) at (-1,4){5};
\node[circle,fill=gray](v6) at (0,6){6};

\Apro 13{0.25}{0.25}11a{green}{a}
\draw[->,green,thick](uua1) to[out=90,in=270] (v4);
\draw[->,green,thick](v2)  to[out=90,in=260] (dda1);

\Apro {-1}3{0.25}{0.25}11b{green}{b}
\draw[->,green,thick](uub1) to[out=90,in=270] (v5);
\draw[->,green,thick](v3)  to[out=90,in=260] (ddb1);

\Apro {-3}{0.5}{0.25}{0.25}01{$\top$}{red}{t1}
\draw[->,red,thick](v1) to[out=180,in=270] (ddt11);

\Apro {3}{-0.5}{0.25}{0.25}10{$\bot$}{red}{b1}
\draw[->,red,thick] (uub11) to [out=90,in=0] (v1);

\Apro {-3}{6.5}{0.25}{0.25}01{$\top$}{red}{t2}
\draw[->,red,thick](v6) to[out=180,in=270] (ddt21);

\Apro {-1.5}1{0.25}{0.25}21{$\overline 2$}{cyan}{h1}
\Apro {0}1{0.25}{0.25}21{$\overline 2$}{cyan}{h2}
\Apro {1.5}1{0.25}{0.25}21{$\overline 2$}{cyan}{h3}
\draw[->,cyan,thick](v1) to[out=90,in=270] (ddh11);
\draw[->,cyan,thick](v1) to[out=90,in=270] (ddh21);
\draw[->,cyan,thick](v1) to[out=90,in=270] (ddh31);
\draw[->,cyan,thick](uuh21) to[out=120,in=120](v1) ;
\draw[->,cyan,thick](uuh22) to[out=60,in=60](v1) ;
\draw[->,cyan,thick](uuh11) to[out=120,in=240](v3);
\draw[->,cyan,thick](uuh12) to[out=60,in=240](v2);
\draw[->,cyan,thick](uuh32) to[out=60,in=300](v3);
\draw[->,cyan,thick](uuh31) to[out=120,in=300](v2);

\draw[->,cyan,thick](uuh22) to[out=60,in=60](v1) ;

\Apro {0}7{0.25}{0.25}21{$\overline 2$}{cyan}{h4}
\Apro {-1.5}5{0.25}{0.25}12{$\underline 2$}{cyan}{d2}
\Apro {1.5}5{0.25}{0.25}12{$\underline 2$}{cyan}{d3}
\draw[->,cyan,thick](v6) to[out=90,in=270] (ddh41);
\draw[->,cyan,thick](uuh41) to[out=120,in=120](v6) ;
\draw[->,cyan,thick](uuh42) to[out=60,in=60](v6) ;

\draw[->,cyan,thick](v5) to[out=90,in=240] (ddd21);
\draw[->,cyan,thick](v5) to[out=90,in=300] (ddd32);
\draw[->,cyan,thick](v4) to[out=90,in=300] (ddd21);
\draw[->,cyan,thick](v4) to[out=90,in=240] (ddd31);

\draw[->,cyan,thick](uud21) to[out=90,in=240](v6) ;
\draw[->,cyan,thick](uud31) to[out=90,in=300](v6) ;

\end{tikzpicture}

\end{center}
\caption{An example of branching automaton seen as the hypergraph of a 
representation.\label{ExBran}}
\end{figure}
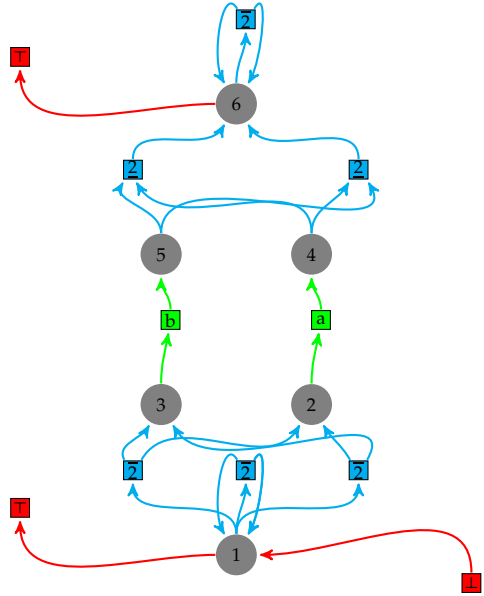
\end{example}
\subsection{Heindel Pro automata\label{Heindel}}
Recently,  Heindel \cite{Heindel} proposed a definition for PRO-automata. In his paper, he used a more general
definition for PROs (category in Mon whose object monoid is free) which contains PROs and colored PROs as special cases.
He used also a definition of free PROs (called \emph{placids}) which matches to free colored PROs in our notations.
To be more precise, he defined the notion of \emph{signature} associated with a list of colors $C$, as a triplet $(\Sigma,s,t)$ where $\Sigma$ is a finite set and $s:\Sigma\rightarrow C^{*}$ and $t:\Sigma\rightarrow C^{*}$ are two maps which associate to each symbol $\sigma\in\Sigma$ a sequence of inputs $s(\sigma)\in C^{*}$ and a sequence of outputs $t(\sigma)\in C^{*}$. A signature is seen as a graph whose vertices belong to a subset of $C^{*}$ and the arrows are in $\Sigma$. A \emph{generalised} PRO $\mathcal D$ is seen as  graph $(D_{0},D_{1})$ where $D_{0}$ is the set of the objects and $D_{1}$ the set of the arrows. The set $D_{0}$ endowed with the tensor product is a monoid and to each arrow $a$ in $D_{1}$ we associate its domain (inputs) $\mathtt{dom}(a)\in D_{0}$ and its codomain (outputs) 
$\mathtt{codom}(a)\in D_{0}$. The definition of a \emph{signature embedding}  generalizes the notion of alphabet in the senses that it is a morphism of graphs from a signature $(\Sigma,s,t)$ to a generalized PRO $(D_{0},D_{1})$ together with a morphism of monoid $C^{*}\rightarrow D_{0}$.\\
Let us recall the definition of a placid as  a free (generalized) PRO \cite{Heindel}. If $C\neq\emptyset$ be a set of colors and $(\Sigma,s,t)$ be a signature. There exists a placid $\mathcal  P(\Sigma)$ into which the signature $\Sigma$ embeds via the inclusion $\Sigma\subset \mathcal P(\Sigma)$ such that for each generalized PRO $\mathcal D$ and embedding $\iota:\Sigma\rightarrow \mathcal D$, there exists a unique (generalized) PRO-morphism $F_{\iota}:\mathcal P(\Sigma)\rightarrow \mathcal D$ that restrict to $\iota$. From this definition, we notice that a  placid is nothing but a Path PRO (see section \ref{Path}).\\
Now we have all the ingredients to recall the definition of PRO automata by \cite{Heindel}.
\begin{definition}(Heindel \cite{Heindel})
	A PRO automaton  over the signature $C$-colored signature $(\Sigma,s,t)$ is a tuple $\mathcal A=(Q,\Gamma,\delta,I,F)$ where $Q$ is a finite set of states, $(\Gamma,s',t')$ a signature over the set of colors $Q$,  $I$ and $F$ are two subset of words in $Q^{*}$, and $\delta$ is a morphism of signatures  (that is a morphism of graph induced by a morphism of monoid $Q^{*}\rightarrow C^{*}$).
\end{definition} 
For our purpose we assume three restrictions:
\begin{itemize}
	\item first $\#C=1$ this means that the placid $\mathcal P(\Sigma)$ is nothing but the free PRO $\mathcal F(\Sigma)$, \item all the elements of $\Sigma$ has at least one input and at least one output,
	\item the map $s'\times \delta_{1}\times t':\Gamma\rightarrow Q^{*}\times \Sigma\times Q^{*}$ is injective, where $\delta_{1}$ denotes the component of the map $\delta$ sending the arrows of the graph $\Gamma$ to the arrows of $\Sigma$, that is the PRO automaton is \emph{normal} in the sense of \cite{Heindel}.
	\end{itemize}
\begin{definition}(Heindel \cite{Heindel})
A \emph{run} of an automaton $\mathcal A=(A,\Gamma,\delta,I,F)$ is an arrow in $r:q\rightarrow p$ in $\mathcal P(\Gamma)$ and it is \emph{accepting} if its $q\in I$ and $q'\in F$.
The \emph{language} of $\mathcal A$ is the set of the image in $\mathcal F(\Sigma)$ of the accepting runs. With our notation we have
\[{}
\mathcal L(\mathcal A)=\left\{u(\delta(a))\mid a\in\mathcal \mathcal F(\Gamma), \mathtt{In}(a)\in I, \mathtt{Out}(a)\in J \right\}
\]
\end{definition}
This kind of automata and their languages were discovered and studied by Bossut in his PHD \cite{Bossut} without the help of the theory of PROs.
\begin{example}\rm
In the aim to illustrate the construction, we consider, as in \cite{Heindel}, the example of the Bossut's brick wall \cite{Bossut}.\\
Let $Q=\{1,2,3\}$,
$
\Gamma=\left\{\tikdanseq{\chip 00{0.75}{0.75}11{}{\ }
\node(n1) at (0.4,-0.1) {\tiny 1};
\node(n1) at (0.35,1.15) {\tiny 2};},
\tikdanseq{\chip 00{0.75}{0.75}11{}{\ }\node(n1) at (0.4,-0.1) {\tiny 1};
\node(n1) at (0.35,1.15) {\tiny 3};},
\tikdanseq{\chip 00{0.75}{0.75}22{}{\ }
\node(n1) at (0.4,-0.1) {\tiny 1\ \ \  \ 2};
\node(n1) at (0.35,1.15) {\tiny 2\ \ \ \ 3};},
\tikdanseq{\chip 00{0.75}{0.75}22{}{\ }
\node(n1) at (0.4,-0.1) {\tiny 3\ \ \  \ 1};
\node(n1) at (0.35,1.15) {\tiny 2\ \ \ \ 3};},
\tikdanseq{\chip 00{0.75}{0.75}22{}{\ }
\node(n1) at (0.4,-0.1) {\tiny 3\ \ \  \ 2};
\node(n1) at (0.35,1.15) {\tiny 2\ \ \ \ 3};}
\right\}
$, and $I=F=1\{23\}^{*}1\cup\{23\}^{+}$.\\
We have
\[{}
\mathcal L(\mathcal A)=\left\{\begin{array}{c}L_{\epsilon}(m)\\
\updownarrow\\\left(\begin{array}{c}L_{1-\epsilon}(m)\\\updownarrow\\L_{\epsilon}(m)\end{array}\right)^{\updownarrow n}\\\end{array}\mid \epsilon\in\{0,1\}, m\geq 1,n\geq 0\right\}\cup{}
\left\{\left(\begin{array}{c}L_{1-\epsilon}(m)\\\updownarrow\\L_{\epsilon}(m)\end{array}\right)^{\updownarrow n}\mid \epsilon\in\{0,1\}, m\geq 1,n\geq 0\right\}
\]
where the $L_{\epsilon}(m)$ are defined recursively by
\[{}
L_{0}(1)=\tikdanseq{\chip 00{0.75}{0.75}11{}{\ }},\ L_{0}(2m)=L_{0}(2m-1)\leftrightarrow \tikdanseq{\chip 00{0.75}{0.75}11{}{\ }},\ 
L_{0}(2m+1)=L_{0}(2m-1)\leftrightarrow \tikdanseq{\chip 00{0.75}{0.75}22{}{\ }},
\]
and
\[{}
L_{1}(2)=\tikdanseq{\chip 00{0.75}{0.75}22{}{\ }},\ L_{1}(2m+1)=L_{1}(2m)\leftrightarrow \tikdanseq{\chip 00{0.75}{0.75}11{}{\ }},\ 
L_{1}(2(m+1))=L_{0}(2m))\leftrightarrow \tikdanseq{\chip 00{0.75}{0.75}22{}{\ }}.
\]
For instance,  the element $\begin{array}{c} L_{1}(8)\\\updownarrow\\ L_{0}(8)\\\updownarrow\\ L_{1}(8)\\\updownarrow\\ L_{0}(8)\end{array}$ corresponds graphically to the following wall:
\begin{center}
\begin{tikzpicture}
	\Apro 00{0.25}{0.25}11{\ }{black}{a11}
	\Apro {1}{0}{0.5}{0.25}22{\ }{black}{a12}
	\Apro {2}{0}{0.5}{0.25}22{\ }{black}{a13}
	\Apro {3}{0}{0.5}{0.25}22{\ }{black}{a14}
	\Apro 40{0.25}{0.25}11{\ }{black}{a15}
	\Apro {0.5}{0.5}{0.5}{0.25}22{\ }{black}{a21}
	\Apro {1.5}{0.5}{0.5}{0.25}22{\ }{black}{a22}
	\Apro {2.5}{0.5}{0.5}{0.25}22{\ }{black}{a23}
	\Apro {3.5}{0.5}{0.5}{0.25}22{\ }{black}{a24}
	
	\Apro 01{0.25}{0.25}11{\ }{black}{a31}
	\Apro {1}{1}{0.5}{0.25}22{\ }{black}{a32}
	\Apro {2}{1}{0.5}{0.25}22{\ }{black}{a33}
	\Apro {3}{1}{0.5}{0.25}22{\ }{black}{a34}
	\Apro 41{0.25}{0.25}11{\ }{black}{a35}
	\Apro {0.5}{1.5}{0.5}{0.25}22{\ }{black}{a41}
	\Apro {1.5}{1.5}{0.5}{0.25}22{\ }{black}{a42}
	\Apro {2.5}{1.5}{0.5}{0.25}22{\ }{black}{a43}
	\Apro {3.5}{1.5}{0.5}{0.25}22{\ }{black}{a44}
	
	\draw[-](uua111)  to  (dda211);
	\draw[-](uua121)  to  (dda212);
	\draw[-](uua122)  to  (dda221);
	\draw[-](uua131)  to  (dda222);
	\draw[-](uua132)  to  (dda231);
	\draw[-](uua141)  to  (dda232);
	\draw[-](uua142)  to  (dda241);
	\draw[-](uua151)  to  (dda242);
	
	\draw[-](uua211)  to  (dda311);
	\draw[-](uua212)  to  (dda321);
	\draw[-](uua221)  to  (dda322);
	\draw[-](uua222)  to  (dda331);
	\draw[-](uua231)  to  (dda332);
	\draw[-](uua232)  to  (dda341);
	\draw[-](uua241)  to  (dda342);
	\draw[-](uua242)  to  (dda351);
	
	\draw[-](uua311)  to  (dda411);
	\draw[-](uua321)  to  (dda412);
	\draw[-](uua322)  to  (dda421);
	\draw[-](uua331)  to  (dda422);
	\draw[-](uua332)  to  (dda431);
	\draw[-](uua341)  to  (dda432);
	\draw[-](uua342)  to  (dda441);
	\draw[-](uua351)  to  (dda442);
	
\end{tikzpicture}\end{center}
\end{example}
Let us show that such a language can be completely described in terms of multilinear representations. First we consider the free PRO generated by the $u(x)$ for $x\in \Gamma$ and we set $\mathcal X:=\{u(x)\mid x\in\mathcal F\Gamma\}$. For any words in $w=w_{0}\dots w_{k}\in\{1,\dots,N\}$, we define two matrices
\[{}
\mathrm{IN}_{w}(N)=\mathbf E(N,0,k;[],[w_{1},\dots,w_{k}])\in\mathbb B(N,0,k)
\]
and
\[{}
\mathrm{OUT}_{w}(N)=\mathbf E(N,k,0;[w_{1},\dots,w_{k}],[])\in\mathbb B(N,k,0).
\]
For any $\mathfrak x\in \mathcal X$, we define the matrix $\mu_{\Gamma}(\mathfrak x)\in \mathbb B(N,m,n)$ by
$
\displaystyle\mathop{\mu_{\Gamma}(\mathfrak x)}^{I}_{J}=1
$ if there exists $x\in \Gamma$ such that $\mathtt{In}(x)=J$, $\mathtt{Out}(x)=I$, and $u(x)=\mathfrak x$ and $0$ otherwise. The map $\mu$ is extended in a representation of $\mathcal F(\mathcal X)$. As a consequence of Proposition \ref{rep2path}, the language of the automaton is recovered by the formula
\begin{equation}\label{HLA}
\mathcal L(\mathcal A)=\bigcup_{m,n}\bigcup_{u\in I\cup Q^{n}\atop
v\in J\cup Q^{m}}\left\{\mathfrak x\in\mathcal F(\mathcal X)\left| \begin{array}{c}\mathrm{OUT}_{u}\\\updownarrow\\\mu_{\Gamma}(\mathfrak x)\\\updownarrow\\\mathrm{IN}_{v}\end{array}=1\right.\right\}.
\end{equation}
Conversely, to each representation of $\mathcal X$, we can associate a unique signature $\Gamma_{\mu}$ such that for each $x\in\Gamma$, $u(x)\in\mathcal X$ and $\hmat{\mu(u(x))}{\mathtt{Out}(x)}{\mathtt{In}(x)}=1$. 
\begin{definition}(Heindel\cite{Heindel})
	A language is \emph{acceptable} if it is the language of a PRO automaton.
\end{definition}
As a consequence of formula \ref{HLA}, we have:
\begin{theorem}
A language $\mathcal L$ is acceptable if and only if there exists a tuple $(Q,\mu,I,J)$ with $Q=\{1,\dots,N\}$, $I, J\in Q^{*}$, and $\mu$ a representation of $\mathcal X$, such that
\begin{equation}
\mathcal L=\bigcup_{m,n}\bigcup_{u\in I\cup Q^{n}\atop
v\in J\cup Q^{m}}\left\{\mathfrak g\in\mathcal F(\mathcal X)\left| \begin{array}{c}\mathrm{OUT}_{u}\\\updownarrow\\\mu(\mathfrak g)\\\updownarrow\\\mathrm{IN}_{v}\end{array}=1\right.\right\}.
\end{equation}
\end{theorem}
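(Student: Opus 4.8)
The plan is to treat the theorem as the precise packaging of formula~(\ref{HLA}) together with the converse correspondence between signatures over $Q$ and representations $\mu:\mathcal F(\mathcal X)\rightarrow\mathbb B(N)$ sketched just before the statement; both implications then reduce to this correspondence and require essentially no new computation.

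For the forward implication I would assume $\mathcal L$ acceptable, so $\mathcal L=\mathcal L(\mathcal A)$ for a normal PRO automaton $\mathcal A=(Q,\Gamma,\delta,I,J)$ with $Q=\{1,\dots,N\}$, initial set $I$ and final set $J$. Taking $\mu=\mu_\Gamma$, the representation associated with $\Gamma$ defined on each generator by $\displaystyle\mathop{\mu_\Gamma(\mathfrak x)}^{I'}_{J'}=1$ exactly when some arrow $x\in\Gamma$ satisfies $\mathtt{Out}(x)=I'$, $\mathtt{In}(x)=J'$ and $u(x)=\mathfrak x$, formula~(\ref{HLA}) states verbatim that $\mathcal L(\mathcal A)$ equals the claimed union. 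Hence the tuple $(Q,\mu_\Gamma,I,J)$ witnesses the right-hand side, which settles this direction immediately.

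For the converse, given a tuple $(Q,\mu,I,J)$ I would build the signature $\Gamma_\mu$ exactly as in the remark preceding the statement: one arrow $x$ with $u(x)=\mathfrak x$, $\mathtt{Out}(x)=I'$, $\mathtt{In}(x)=J'$ for each generator $\mathfrak x\in\mathcal X$ and each pair $(I',J')$ with $\displaystyle\mathop{\mu(\mathfrak x)}^{I'}_{J'}=1$, with $\delta$ sending $x$ to $\mathfrak x$. The crucial point is that the representation $\mu_{\Gamma_\mu}$ reconstructed from this signature coincides with $\mu$: since we work over the boolean semiring $\mathbb B$, every entry $\displaystyle\mathop{\mu(\mathfrak x)}^{I'}_{J'}$ is $0$ or $1$, so the restriction of $\mu$ to generators is determined entirely by its support, which is precisely the data recorded by $\Gamma_\mu$. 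Thus $\mu_{\Gamma_\mu}$ and $\mu$ agree on $\mathcal X$, and since both are morphisms out of the free PRO $\mathcal F(\mathcal X)$, the uniqueness clause of its universal property forces $\mu_{\Gamma_\mu}=\mu$ everywhere. Applying formula~(\ref{HLA}) to $\mathcal A=(Q,\Gamma_\mu,\delta,I,J)$ then yields $\mathcal L(\mathcal A)=\mathcal L$, so $\mathcal L$ is acceptable.

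The main obstacle is the converse step, and within it the two bookkeeping verifications just indicated: that $\mu\mapsto\Gamma_\mu\mapsto\mu_{\Gamma_\mu}$ is the identity, and that $\Gamma_\mu$ respects the normality restriction demanded of PRO automata, namely that $s'\times\delta_1\times t'$ be injective. The first follows from the boolean nature of the entries combined with the uniqueness half of the universal property of $\mathcal F(\mathcal X)$; the second is immediate, because distinct arrows of $\Gamma_\mu$ carry, by construction, distinct triples $(\mathtt{Out}(x),u(x),\mathtt{In}(x))$, which is exactly the required injectivity. Once these are in place, both inclusions are direct consequences of formula~(\ref{HLA}).
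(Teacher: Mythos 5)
Your proposal is correct and follows exactly the paper's intended argument: the paper states this theorem as an immediate consequence of formula~(\ref{HLA}) together with the remark that every representation $\mu$ of $\mathcal X$ determines a signature $\Gamma_\mu$, which is precisely the two-directional reduction you carry out. Your write-up in fact supplies details the paper leaves implicit (the round-trip identity $\mu_{\Gamma_\mu}=\mu$ via boolean entries and the universal property of $\mathcal F(\mathcal X)$, and the normality of the constructed automaton), and these checks are sound.
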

\begin{example}\rm
The representation associated to the Bossut's wall is defined by
\[
\mu\left(\tikdanseq{\Apro 11{0.25}{0.25}11{\ }{green}{a}}\right)=\mathbf E(3,1,1;[2],[1])+\mathbf E(3,1,1;[3],[1])\] and \[
\mu\left(\tikdanseq{\Apro 11{0.5}{0.25}22{\ }{red}{a}}\right)=\mathbf E(3,2,2;[2,3],[1,2])+\mathbf E(3,2,2;[2,3],[3,1])+\mathbf E(3,2,2;[2,3],[3,2])
\]	 (see Figure \ref{bossut}).

	\begin{figure}[h]
\begin{center}
\begin{tikzpicture}
\node[circle,fill=gray](v1) at (0,0){1};
\node[circle,fill=gray](v3) at (1,3){3};
\node[circle,fill=gray](v2) at (-1,3){2};

\Apro 11{0.25}{0.25}11{\ }{green}{a}
\Apro {-1}1{0.25}{0.25}11{\ }{green}{b}
\Apro {-3}1{0.5}{0.25}22{\ }{red}{c}
\Apro {2.5}1{0.5}{0.25}22{\ }{red}{d}
\Apro {-0.25}4{0.5}{0.25}22{\ }{red}{e}
\draw[->,green,thick](uub1) to[out=90,in=270] (v2);
\draw[->,green,thick](v1)  to[out=90,in=260] (ddb1);
\draw[->,green,thick](uua1) to[out=90,in=270] (v3);
\draw[->,green,thick](v1)  to[out=90,in=260] (dda1);
\draw[->,red,thick](uuc1) to[out=90,in=180] (v2);
\draw[->,red,thick](uuc2) to[out=90,in=240] (v3);
\draw[->,red,thick](v1)  to[out=90,in=260] (ddc1);
\draw[->,red,thick](v2)  to[out=240,in=260] (ddc2);

\draw[->,red,thick](uud1) to[out=90,in=300] (v2);
\draw[->,red,thick](uud2) to[out=90,in=0] (v3);
\draw[->,red,thick](v3)  to[out=270,in=260] (ddd1);
\draw[->,red,thick](v1)  to[out=90,in=260] (ddd2);

\draw[->,red,thick](uue1) to[out=90,in=90] (v2);
\draw[->,red,thick](uue2) to[out=90,in=90] (v3);
\draw[->,red,thick](v2)  to[out=0,in=260] (dde2);
\draw[->,red,thick](v3)  to[out=180,in=260] (dde1);

\end{tikzpicture}

\end{center}
\caption{Representation associated to Bossut's wall.\label{bossut}}
\end{figure}
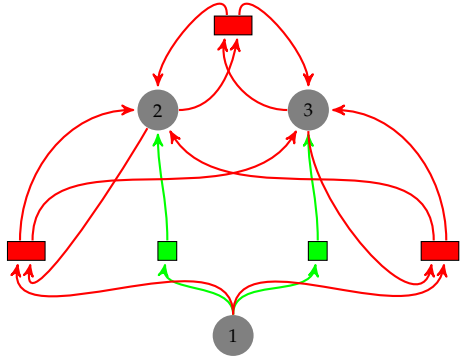
\end{example}
Let $u=u_{1}\dots u_{k}\in \{1,\dots,N\}^{k}$ and  $u'=u'_{1}\dots u'_{k}\in \{1,\dots,N'\}^{k}$, we define
$u\odot v=((u'_{1}-1)M+u_{1})\dots((u'_{k}-1)M+u_{k})$. For instance, if $u=14221\in\{1,\dots,4\}$ and $v=13122\in\{1,\dots,5\}$ then
$u\odot v=2\ 12\ 265$.
\begin{lemma}\label{IodotI'}
	Let $I$ be a regular language over $Q=\{1,\dots,N\}$ and $I'$ 
	a regular language over $Q'=\{1,\dots,N'\}$ then the language $I\odot I':=\{w\odot w'\mid w\in I, w'\in I', |w|=|w'|\}$ is regular over $Q''=\{1,\dots,NN'\}$.
\end{lemma}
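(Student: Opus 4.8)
The plan is to reduce the statement to the standard product-automaton construction, but to carry it out inside the hypermatrix ModPro $\mathbb B(NN')$ so that the compatibility between the Kronecker product $\odot$ and the vertical composition $\updownarrow$ recorded in equation (\ref{KroMorphism2}) does all the algebraic work. First I would observe that the map $(a,b)\mapsto (b-1)N+a$ is a bijection from $Q\times Q'$ onto $Q''=\{1,\dots,NN'\}$, with inverse $c\mapsto(c\%N,\,c/N)$ in the notation of Section \ref{Kronecker}. Consequently every word $x=x_1\cdots x_k\in Q''^{*}$ factors \emph{uniquely} as $x=w\odot w'$ with $w=(x_1\%N)\cdots(x_k\%N)\in Q^{*}$ and $w'=(x_1/N)\cdots(x_k/N)\in Q'^{*}$, and $|w|=|w'|=|x|$. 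This already disposes of the length-matching condition hidden in the definition of $I\odot I'$: reading a single word over $Q''$ forces its two projections to share a common length.

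Since $I$ and $I'$ are regular over the boolean semiring $\mathbb B$, I would fix recognizing automata in the linear-representation form of Section \ref{word}: a triple $(\lambda,\rho,\gamma)$ with $\lambda\in\mathbb B(N,0,1)$, $\gamma\in\mathbb B(N,1,0)$ and $\rho\colon Q^{*}\to(\mathbb B(N,1,1),\updownarrow)$ a monoid morphism whose behavior $\lambda\updownarrow\rho(w)\updownarrow\gamma$ equals $1$ precisely when $w\in I$, and likewise $(\lambda',\rho',\gamma')$ for $I'$ over $Q'$. I then build a representation over $Q''$ by setting, for each letter $c\in Q''$, $\rho''(c):=\rho(c\%N)\odot\rho'(c/N)\in\mathbb B(NN',1,1)$, extending $\rho''$ to the unique monoid morphism $Q''^{*}\to(\mathbb B(NN',1,1),\updownarrow)$, and putting $\lambda'':=\lambda\odot\lambda'$, $\gamma'':=\gamma\odot\gamma'$.

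The heart of the argument is the identity $\rho''(w\odot w')=\rho(w)\odot\rho'(w')$ for all $w,w'$ with $|w|=|w'|$, proved by induction on the common length. The base case is $\mathbf I(NN')=\mathbf I(N)\odot\mathbf I(N')$, already observed in the proof of Theorem \ref{thkronecker}. For the inductive step, writing $w=aw_0$, $w'=bw'_0$ and $c=(b-1)N+a$, one has $w\odot w'=c\,(w_0\odot w'_0)$, so
\begin{equation}
\rho''(w\odot w')=\rho''(c)\mathbin{\updownarrow}\rho''(w_0\odot w'_0)=\bigl(\rho(a)\odot\rho'(b)\bigr)\mathbin{\updownarrow}\bigl(\rho(w_0)\odot\rho'(w'_0)\bigr),
\end{equation}
and equation (\ref{KroMorphism2}) rewrites the right-hand side as $\bigl(\rho(a)\updownarrow\rho(w_0)\bigr)\odot\bigl(\rho'(b)\updownarrow\rho'(w'_0)\bigr)=\rho(w)\odot\rho'(w')$. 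Composing with the endpoint vectors and applying equation (\ref{KroMorphism2}) twice more, now peeling off $\lambda''$ and then $\gamma''$, gives
\begin{equation}
\lambda''\mathbin{\updownarrow}\rho''(w\odot w')\mathbin{\updownarrow}\gamma''=\bigl(\lambda\updownarrow\rho(w)\updownarrow\gamma\bigr)\odot\bigl(\lambda'\updownarrow\rho'(w')\updownarrow\gamma'\bigr).
\end{equation}
Both factors on the right are scalars in $\mathbb B=\mathbb B(N,0,0)=\mathbb B(N',0,0)$, and on scalars $\odot$ reduces to the product of $\mathbb B$, which equals $1$ exactly when both factors equal $1$. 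Hence the behavior of $(\lambda'',\rho'',\gamma'')$ at $x=w\odot w'$ is $1$ iff $w\in I$ and $w'\in I'$; since every word of $Q''^{*}$ arises uniquely in this way, $(\lambda'',\rho'',\gamma'')$ recognizes exactly $I\odot I'$, so $I\odot I'$ is regular over $Q''$.

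I expect the only delicate point to be the bookkeeping of the bijection $Q\times Q'\cong Q''$ — matching the projections $c\%N$ and $c/N$ to the two automata and checking that the two letter-streams are consumed in lock-step — together with verifying that equation (\ref{KroMorphism2}), stated for a single vertical composition of two factors, indeed propagates through the full accepting chain $\lambda\updownarrow\rho(w)\updownarrow\gamma$. Both are routine inductions once the base identity $\mathbf I(N)\odot\mathbf I(N')=\mathbf I(NN')$ is in hand.
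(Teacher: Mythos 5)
Your proof is correct, and although the object you construct is at bottom the same product construction the paper uses, your route to it is genuinely different in formalization. The paper's proof stays inside classical automata theory: it fixes two \emph{complete deterministic} automata recognizing $I$ and $I'$, forms the product automaton whose transition on the letter $(\beta-1)N+\alpha$ simulates $\alpha$ in the first machine and $\beta$ in the second, and concludes via the correspondence between paths in the product and pairs of paths in the factors. You instead take boolean linear representations as in Section \ref{word} and replace the path correspondence by algebra: your $\rho''(c)=\rho(c\%N)\odot\rho'(c/N)$ is exactly the transition structure of that product automaton, and acceptance is computed through the mixed-product identity (\ref{KroMorphism2}) applied inductively. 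Your version buys three things: it needs neither determinization nor completeness (any boolean representation of $I$ and $I'$ works); the length-matching constraint in the definition of $I\odot I'$ is discharged once and for all by the unique factorization $x=(x\%N)\odot(x/N)$ of a word over $Q''$; and it runs on the very identities relating $\odot$ and $\updownarrow$ that the paper invokes immediately after this lemma to prove closure of acceptable languages under intersection, so the two arguments merge into one computation. The paper's version buys brevity and elementarity. One slip to repair: you conflate the alphabet size $N$ with the dimension (number of states) of the representation recognizing $I$, writing $\lambda\in\mathbb B(N,0,1)$, $\rho(a)\in\mathbb B(N,1,1)$ and $\rho''(c)\in\mathbb B(NN',1,1)$; those dimensions are arbitrary integers $M$, $M'$ unrelated to $N$, $N'$, and should be denoted as such. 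No step of your induction uses the coincidence, so the argument is unaffected, but as written it presumes a recognizer with exactly $N$ states, which the hypothesis does not provide.
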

\begin{proof}
	Let $A=(Q,S,\{i\},F,\delta)$ and $A'=(Q',S',\{i'\},F',\delta')$ be two complete deterministic automata recognizing respectively 
	$I$ and $I'$. We consider the automaton $A=(Q',S\times S',\{(i,i')\},F\times F',\delta'')$ where
	$\delta''((q,q'),(\beta-1)N+\alpha)=(p,p')$ if and only if $\delta(q,\alpha)=p$ and $\delta'(q',\alpha)=p'$. We remark there exists a path from $(i,i')$ to a final state $(f,f')$ in $Q''$ labeled with $((\beta_{1}-1)N+\alpha_{1})\cdots ((\beta_{k}-1)N+\alpha_{k})=\alpha_{1}\cdots\alpha_{k}\odot \beta_{1}\cdots\beta_{k}$ if and only if there exists a path from $i$ to $f$ in $Q'$ labeled with $\alpha_{1}\cdots\alpha_{k}$ and a path from $i'$ to $f'$ in $Q'$ labeled with $\beta_{1}\cdots\beta_{k}$. We deduce that $I\odot I'$ is regular.
\end{proof}
Furthermore, from the definition, one obtains
\[{}
\mathtt{IN}_{u}(N)\odot\mathtt{IN}_{u'}(N')=\mathtt{IN}_{u\odot u'}(NN') \mbox{ and } 
\mathtt{OUT}_{u}(N)\odot\mathtt{OUT}_{u'}(N')=\mathtt{OUT}_{u\odot u'}(NN').
\]
\begin{theorem} The intersection of two acceptable languages still is acceptable. 
	More precisely, let $\mathcal L$ be the language of $A=(\{1,\dots,N\},\mu,I,J)$ and $\mathcal L'$ be the language of $A'=(\{1,\dots,N'\},\mu',I',J')$ then
	\[\mathcal L\left(\{1,\dots,NN'\},\mu\hat\odot\mu',I\odot I',J\odot J'\right)=\mathcal L\cap\mathcal L'.
	\]
\end{theorem}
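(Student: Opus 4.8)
The plan is to compute the language of the product automaton $\left(\{1,\dots,NN'\},\mu\hat\odot\mu',I\odot I',J\odot J'\right)$ directly from the characterisation of acceptable languages (the preceding theorem, formula (\ref{HLA})) and to show that it coincides with $\mathcal L\cap\mathcal L'$. First I would check that the tuple is well-formed: by Lemma \ref{IodotI'} both $I\odot I'$ and $J\odot J'$ are regular over $Q''=\{1,\dots,NN'\}$, and $\mu\hat\odot\mu'$ is a representation of $\mathcal X$ in $\mathbb B(NN')$ by the very definition of the Hadamard product. Hence the tuple does define an acceptable language, so the qualitative statement (the intersection is acceptable) follows immediately once the set equality is proved.

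For the equality, fix a circuit $\mathfrak g\in\mathcal F(\mathcal X)$ with, say, $p$ outputs and $q$ inputs, so that $\mu(\mathfrak g)\in\mathbb B(N,p,q)$, $\mu'(\mathfrak g)\in\mathbb B(N',p,q)$ and $(\mu\hat\odot\mu')(\mathfrak g)\in\mathbb B(NN',p,q)$. The mixed-radix (Euclidean) decomposition of each letter shows that the operation $\odot$ on words is a bijection between pairs $(u,u')\in[N]^p\times[N']^p$ and words $u\odot u'\in[NN']^p$, and likewise for $(v,v')\in[N]^q\times[N']^q$. Consequently, membership of $\mathfrak g$ in the language of the product automaton amounts to the existence of $u\in I$, $u'\in I'$, $v\in J$, $v'\in J'$ with $|u|=|u'|=p$ and $|v|=|v'|=q$ such that the scalar
\[
\begin{array}{c}\mathrm{OUT}_{u\odot u'}(NN')\\\updownarrow\\(\mu\hat\odot\mu')(\mathfrak g)\\\updownarrow\\\mathrm{IN}_{v\odot v'}(NN')\end{array}
\]
equals $1$.

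The heart of the argument is to factor this scalar. Using the identities $\mathrm{OUT}_{u\odot u'}(NN')=\mathrm{OUT}_u(N)\odot\mathrm{OUT}_{u'}(N')$ and $\mathrm{IN}_{v\odot v'}(NN')=\mathrm{IN}_v(N)\odot\mathrm{IN}_{v'}(N')$ stated just before the theorem, together with Corollary \ref{HadamardRep} in the form $(\mu\hat\odot\mu')(\mathfrak g)=\mu(\mathfrak g)\odot\mu'(\mathfrak g)$, and then applying equation (\ref{KroMorphism2}) twice to pull $\odot$ through the two vertical compositions, I obtain
\[
\begin{array}{c}\mathrm{OUT}_{u\odot u'}\\\updownarrow\\(\mu\hat\odot\mu')(\mathfrak g)\\\updownarrow\\\mathrm{IN}_{v\odot v'}\end{array}
=\left(\begin{array}{c}\mathrm{OUT}_u\\\updownarrow\\\mu(\mathfrak g)\\\updownarrow\\\mathrm{IN}_v\end{array}\right)\odot\left(\begin{array}{c}\mathrm{OUT}_{u'}\\\updownarrow\\\mu'(\mathfrak g)\\\updownarrow\\\mathrm{IN}_{v'}\end{array}\right).
\]
Both factors lie in $\mathbb B(N,0,0)=\mathbb B$ and $\mathbb B(N',0,0)=\mathbb B$, so by (\ref{defdot}) with $p=q=0$ their Kronecker product is simply their product in the boolean semiring, which equals $1$ exactly when both factors are $1$. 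Thus $\mathfrak g$ lies in the language of the product automaton iff some $(u,v)$ witnesses $\mathfrak g\in\mathcal L$ and some $(u',v')$ witnesses $\mathfrak g\in\mathcal L'$, i.e. iff $\mathfrak g\in\mathcal L\cap\mathcal L'$.

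The one point demanding care — and the main obstacle — is the length/arity bookkeeping. I must verify that the decomposition $w=u\odot u'$ forced by $w\in I\odot I'$ has $|u|=|u'|$ equal to the number of outputs of $\mathfrak g$ (and symmetrically that $|v|=|v'|$ matches the inputs for $J\odot J'$), and conversely that any equal-length pair $(u,u')$ arising from independent witnesses of $\mathfrak g\in\mathcal L$ and $\mathfrak g\in\mathcal L'$ recombines into a genuine word of $I\odot I'$. Both follow from the bijectivity of $\odot$ on equal-length word pairs and from the fact that the grading $(p,q)$ of $\mu(\mathfrak g)$ fixes these lengths, so the witnessing lengths for the two factors are automatically compatible; but since these are precisely the steps where an off-by-orientation mismatch between $\mathrm{IN}$, $\mathrm{OUT}$ and the direction of $\updownarrow$ could slip in, I would check them explicitly against the definitions.
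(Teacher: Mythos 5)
Your proposal is correct and follows essentially the same route as the paper's proof: both hinge on Lemma \ref{IodotI'} for the regularity of $I\odot I'$ and $J\odot J'$, the identities $\mathrm{OUT}_{u\odot u'}(NN')=\mathrm{OUT}_{u}(N)\odot\mathrm{OUT}_{u'}(N')$ and $\mathrm{IN}_{v\odot v'}(NN')=\mathrm{IN}_{v}(N)\odot\mathrm{IN}_{v'}(N')$, Corollary \ref{HadamardRep}, and equation (\ref{KroMorphism2}) to factor the acceptance scalar as the product of the two boolean acceptance values. The only difference is presentational: you make explicit the length bookkeeping and the bijectivity of $\odot$ on equal-length word pairs, which the paper leaves implicit.
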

\begin{proof}
	Notice first that Lemma \ref{IodotI'} implies that $I\odot I'$ and $J\odot J'$ are regular.\\	We remark also that
	\[{}
	 \begin{array}{c}\mathrm{OUT}_{u\odot u'}\\\updownarrow\\\mu\hat\odot\mu'(\mathfrak x)\\\updownarrow\\\mathrm{IN}_{v\odot v'}\end{array}
=\begin{array}{c}\mathrm{OUT}_{u}\odot\mathrm{OUT}_{u'}\\\updownarrow\\\mu(\mathfrak x)\odot\mu'(\mathfrak x)\\\updownarrow\\\mathrm{IN}_{v}\odot \mathrm{IN}_{v'}\end{array}=\begin{array}{c}\mathrm{OUT}_{u}\\\updownarrow\\\mu(\mathfrak x)\\\updownarrow\\\mathrm{IN}_{v}\end{array}\odot \begin{array}{c}\mathrm{OUT}_{u'}\\\updownarrow\\\mu'(\mathfrak x)\\\updownarrow\\\mathrm{IN}_{v'}\end{array}
=\begin{array}{c}\mathrm{OUT}_{u}\\\updownarrow\\\mu(\mathfrak x)\\\updownarrow\\\mathrm{IN}_{v}\end{array} \begin{array}{c}\mathrm{OUT}_{u'}\\\updownarrow\\\mu'(\mathfrak x)\\\updownarrow\\\mathrm{IN}_{v'}\end{array}.
	\]
	And so 
	\[{}
	\begin{array}{c}\mathrm{OUT}_{u\odot u'}\\\updownarrow\\\mu\hat\odot\mu'(\mathfrak x)\\\updownarrow\\\mathrm{IN}_{v\odot v'}\end{array}=1\mbox{ if and only if }
		\begin{array}{c}\mathrm{OUT}_{u}\\\updownarrow\\\mu(\mathfrak x)\\\updownarrow\\\mathrm{IN}_{v}\end{array}=1\mbox{ and } \begin{array}{c}\mathrm{OUT}_{u'}\\\updownarrow\\\mu'(\mathfrak x)\\\updownarrow\\\mathrm{IN}_{v'}\end{array}=1.
	\]
	This ends the proof.
\end{proof}
Also we have
\begin{theorem}(Bossut et al \cite{BDW})\label{BossutUnion} 
	If $\mathcal L$ and $\mathcal L'$ are acceptable then $\mathcal L\cup\mathcal L'$ is acceptable.
\end{theorem}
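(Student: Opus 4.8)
The plan is to realize $\mathcal L\cup\mathcal L'$ by the disjoint union of the two state sets together with the quasi-direct sum of the two representations. Writing $A=(\{1,\dots,N\},\mu,I,J)$ and $A'=(\{1,\dots,N'\},\mu',I',J')$, I set $Q''=\{1,\dots,N+N'\}$ and let $\mu''$ be the unique representation of $\mathcal F(\mathcal X)$ with $\mu''(\mathfrak x)=\mu(\mathfrak x)\hat\oplus\mu'(\mathfrak x)$ for each generator $\mathfrak x\in\mathcal X$ (it exists and is unique by freeness). For a word $w=w_1\cdots w_k$ over $\{1,\dots,N'\}$ write $w+N=(w_1+N)\cdots(w_k+N)$, and put $I''=I\cup(I'+N)$ and $J''=J\cup(J'+N)$. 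Since shifting every letter by $N$ is a letter-to-letter renaming, $I'+N$ and $J'+N$ are regular, hence so are $I''$ and $J''$; thus $A''=(Q'',\mu'',I'',J'')$ is a legitimate PRO automaton, and I claim $\mathcal L(A'')=\mathcal L\cup\mathcal L'$.

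The point requiring care is that, as the example in Section \ref{RepQuasi} shows, $\mu''$ is \emph{not} the quasi-direct sum on disconnected circuits, so one cannot simply write $\mu''(\mathfrak g)=\mu(\mathfrak g)\hat\oplus\mu'(\mathfrak g)$. Instead I would prove, by structural induction on $\mathfrak g\in\mathcal F(\mathcal X)_{n,m}$, the following block-stability lemma: if $u\in\{1,\dots,N\}^n$ and $\hmat{\mu''(\mathfrak g)}uv\neq0$ then $v\in\{1,\dots,N\}^m$ and $\hmat{\mu''(\mathfrak g)}uv=\hmat{\mu(\mathfrak g)}uv$; symmetrically, if $u\in(\{1,\dots,N'\}+N)^n$ and the entry is nonzero then $v\in(\{1,\dots,N'\}+N)^m$ and $\hmat{\mu''(\mathfrak g)}uv=\hmat{\mu'(\mathfrak g)}{u-N}{v-N}$. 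For an elementary circuit (a chip or a wire) this is immediate, since $\mu''(\mathfrak g)=\mu(\mathfrak g)\hat\oplus\mu'(\mathfrak g)$ by definition and by equation (\ref{IoplusI}), hence is block-diagonal and restricts to $\mu$ (resp.\ $\mu'$) on each block. For a horizontal composition $\mathfrak g=\mathfrak g_1\leftrightarrow\mathfrak g_2$ the entry factors as $\hmat{\mu''(\mathfrak g_1)}{u_1}{v_1}\hmat{\mu''(\mathfrak g_2)}{u_2}{v_2}$, so a pure-block $u=u_1u_2$ forces each $u_i$ into that block, and the induction hypothesis forces each $v_i$ into the same block with the desired values. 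For a vertical composition $\mathfrak g=\begin{array}{c}\mathfrak g'\\\updownarrow\\\mathfrak g''\end{array}$ the entry is $\sum_{K}\hmat{\mu''(\mathfrak g')}uK\hmat{\mu''(\mathfrak g'')}Kv$; applying the hypothesis to $\mathfrak g'$ shows every intermediate $K$ contributing a nonzero term must stay in the block of $u$, and then applying it to $\mathfrak g''$ pushes $v$ into the same block, so the sum collapses onto pure-block $K$ and equals $\hmat{\mu(\mathfrak g)}uv$ (resp.\ $\hmat{\mu'(\mathfrak g)}{u-N}{v-N}$).

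Finally I would conclude as follows. Every chip is pluggable, so any non-empty circuit has $n,m\geq1$ (the empty circuit has empty boundaries and is a trivial separate check, vacuous under the standing convention that $I,J,I',J'$ contain no word of length $0$). Because $I$ and $I'+N$ live over the disjoint alphabets $\{1,\dots,N\}$ and $\{N+1,\dots,N+N'\}$, every admissible $u\in I''\cap(Q'')^n$ lies entirely in one block, and likewise every $v\in J''\cap(Q'')^m$. By the block-stability lemma a \emph{mixed} choice (say $u$ in the first block and $v$ in the second) makes $\hmat{\mu''(\mathfrak g)}uv=0$, so it never contributes to acceptance; a choice with $u\in I$ and $v\in J$ gives $\hmat{\mu''(\mathfrak g)}uv=\hmat{\mu(\mathfrak g)}uv$, realizing acceptance exactly when $\mathfrak g\in\mathcal L$; and a choice with $u\in I'+N$, $v\in J'+N$ gives $\hmat{\mu''(\mathfrak g)}uv=\hmat{\mu'(\mathfrak g)}{u-N}{v-N}$, realizing acceptance exactly when $\mathfrak g\in\mathcal L'$. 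Hence $\mathfrak g\in\mathcal L(A'')$ iff $\mathfrak g\in\mathcal L$ or $\mathfrak g\in\mathcal L'$, by the characterization of acceptable languages given above, so $\mathcal L(A'')=\mathcal L\cup\mathcal L'$. The only real obstacle is the block-stability lemma itself: it is what prevents the two automata from interacting across the connected components of a disconnected circuit, and it is the precise substitute for the false identity $\mu''(\mathfrak g)=\mu(\mathfrak g)\hat\oplus\mu'(\mathfrak g)$.
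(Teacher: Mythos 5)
Your proposal is correct, and it is in fact more of a proof than the paper itself offers: the paper attributes the theorem to Bossut et al.\ \cite{BDW} and contents itself with the one-line remark that the statement ``can also be seen as a consequence of Theorem \ref{sumRep}''. Your construction --- the quasi-direct sum $\mu\hat\oplus\mu'$ on generators, state set $\{1,\dots,N+N'\}$, and the shifted unions $I''=I\cup(I'+N)$, $J''=J\cup(J'+N)$ --- is exactly the construction that remark has in mind, but you correctly identify why the remark alone is not a proof: Theorem \ref{sumRep} gives $(\mu\hat\oplus\mu')(\mathfrak c)=\mu(\mathfrak c)\hat\oplus\mu'(\mathfrak c)$ only for \emph{connected} circuits, whereas acceptable languages contain disconnected elements, for which this identity genuinely fails (the paper's own counterexample in Section \ref{RepQuasi}). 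Your block-stability lemma is the right substitute: it is strictly weaker than block-diagonality of the whole hypermatrix (index pairs involving mixed words may carry nonzero entries, as the paper's two-wire example shows), yet it is preserved by both compositions and is all that acceptance requires, since every word of $I''$ and $J''$ lies entirely in one block. So where the paper points to \cite{BDW}, or to a theorem that does not literally apply to disconnected circuits, you give a self-contained inductive argument.

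Two small repairs are needed. First, state the lemma as an unconditional equality rather than in ``nonzero implies'' form: for $u\in\{1,\dots,N\}^n$ you want $\hmat{\mu''(\mathfrak g)}uv=\hmat{\mu(\mathfrak g)}uv$ for every $v\in\{1,\dots,N\}^m$, and $\hmat{\mu''(\mathfrak g)}uv=0$ for every other $v$ (symmetrically for the second block). The implication form only yields $\mathcal L(A'')\subseteq\mathcal L\cup\mathcal L'$; the reverse inclusion $\mathcal L\subseteq\mathcal L(A'')$ needs the equality, since a priori an entry of $\mu''(\mathfrak g)$ could vanish where the corresponding entry of $\mu(\mathfrak g)$ does not. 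This is a wording issue, not a mathematical one: your inductive steps already assert equality (the vertical-composition step claims the collapsed sum \emph{equals} $\hmat{\mu(\mathfrak g)}uv$, which requires the equality form of the induction hypothesis), so only the statement of the lemma needs strengthening. Second, the empty-circuit proviso you flag is real, not pedantic: if empty words were allowed in the accepting sets, $A''$ could accept $\oslash$ using the empty word of $I$ together with the empty word of $J'+N$, even though neither $A$ nor $A'$ accepts $\oslash$; so either your convention excluding length-zero words, or an explicit correction of $I''$ and $J''$ in that degenerate case, must indeed be kept.
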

This theorem can also be seen as a consequence of Theorem \ref{sumRep}.

\subsection{A universal definition for automata?}
If there were no question mark, this title would be a bit pretentious. Indeed, it does not really make sense to look for a definition of a universal automaton in an absolute way  but rather in relation to a property that we would like to conserve. It is the matrix vision of the automata that we want to keep through  the use of representations and a generalized notion of paths. In this quest for a universal definition, we showed two candidates each with their own strengths and weaknesses. The first one comes from  sections \ref{word}, \ref{tree} and \ref{branching}. In this model, there are no distinction between representations and automata. We are therefore immersed in the heart of the algebraic tools and this gives coherence to this model. For instance, there are no more initial or final states, nor more generally, input and output vectors. All states play a similar role and the automaton produces a number when we give it an element with neither input nor output. Although it is algebraically interesting, this model does not give a general definition of recognizable languages, but a multitude of different theories of recognizability depending on the question we want to ask and on the form of the elements of the PRO. The second candidate is due to Bossut \cite{Bossut,BDW} (while the connexion with PROs was pointed out by Heindel \cite{Heindel}) and was studied in Section \ref{Heindel}. This model may seem a little less elegant than the first one (although it is a matter of point of view) because it forces us to manipulate a triplet instead of only one representation. Indeed, together with the representation, we need two sets $I$ and $J$ to manage the inputs and outputs of the automata. Nevertheless, it is very well adapted to the theory of languages because it allows us to capture a general notion of recognizability (says acceptability in \cite{Heindel}).\\
Notice also that the two models can be extends to manipulate commutative multiplicities. For the first one,  it is straightforward since it suffices to replace  the boolean semiring by the suitable commutative semiring. For the second model, we need besides to replace the regular languages $I$ and $J$ by regular series. Hence, we  define the series of an automaton by
\[{}
S(\{1,\dots,N\},\mu,I,J)=\sum_{m,n}\sum_{\mathfrak x\in\mathcal F(\mathcal X)_{m,n}}\left(\sum_{u\in\{1,\dots,N\}^{m}\atop{}
v\in\{1,\dots,N\}^{n}
} \langle u,I\rangle \langle v,J\rangle\begin{array}{c}
\mathrm{OUT}_{v}\\\updownarrow\\\mu(\mathfrak x)\\\updownarrow\\\mathrm{IN}_{u}
\end{array}\right)\mathfrak x,
\]
where $\langle w,I\rangle$ denotes the coefficient of the word $w$ in the series $I$.\\
\section{Conclusion and perspectives}
Throughout the paper we have investigated several properties of the multilinear representations of  free PROs. In a way, this validates the diagrammatic approaches for multi-linear calculations. As an illustration, we describe in Appendix \ref{QI} how quantum circuits can be seen as representations of a free PRO. The cases of circuits Pros is easier to understand because the combinatorial description is well known. For more general PROs, the combinatorial description is less easy and has been less studied. It seems to be based on isotopic classes of some graphs. However, such representations appear naturally in the literature (see Section \ref{TL} for the example of Temperley-Lieb algebras). Any advances in the understanding of the combinatorial aspect of free Pros should have many applications. For instance, it should be interesting to understand which notion extends the connectivity in the aim to generalize Theorem 
\ref{sumRep}.\\
In the last section, we saw that free PROs procide an ideal framework for defining a general notion of automata embracing most of the classical definitions (word automata, tree automata, branching automata) and based on (multi)-linear algebra. Nevertheless, word automata with non-commutative multiplicities do not fit into that pattern. Indeed, when $\mathbb K$ is not commutative, $\mathbb K(N)$ is not a PRO because the intechange law no longer holds. In the same way, it is not possible to simulate the behavior of transducers with the proposed constructions. All of these remarks suggest that one has to define more general multiplicities having algebraic structure  close than those of PROs, for instance by choosing a convenient structure of enriched PRO.\\
Another direction of research will consist in investigating  generalizations of the Kleene-Schutzenberger theorem for autom for PRO-automata and  characterizing regularseries by introducing  kinds of \emph{rational operations} playing the same role as the catenation and the Kleene star for regular (word) languages. Notice that a Kleene theorem for Bossut automata was proved in \cite{BDW}. In this paper it is proved that a language $\mathcal L$ is accepted by an automaton if and only if it can be described using the symbols together with $5$ \emph{rational} operations: vertical composition, vertical star, horizontal composition, horizontal star, and union. It should be interesting to investigate the hypermatrix counter-parts of these operations and how they can be generalized to series.\\ \\ \\
\noindent{\bf Acknowledgment}
This work was partially supported by  ERDF project MOUSTIC.

\appendix{}
\section{ModPro and enriched categories \label{AppModPro}}
Let $(\mathcal M,\otimes,\alpha,\lambda,\rho,I)$ be a monoidal category. 
A $\mathcal M$-category $\mathtt C$ (also called enriched category) consists of
\begin{itemize}
\item the class $Obj(\mathtt C)$ of the objects of $\mathtt C$,
\item an object $\mathtt C(\aa,\bb)$ of $\mathcal M$ for any pair of objects $(\aa,\bb)$ of $\mathtt C$,
\item for each object $\aa$ in $\mathtt C$, the identity $Id_{\aa}$ is an arrow $I\rightarrow \mathtt C(\aa,\aa)$ in $\mathcal M$,
 where $I$ denotes the identity object in $\mathcal M$,
\item the composition of the arrows in $\mathtt C$ is encoded by the tensor product in $\mathrm M$. More precisely, we have for
each triple $(\aa,\bb,\cc)$ of objects in $\mathtt C$ a composition
 ${\pmb \circ}_{\aa\bb\cc}:\mathtt C(\bb,\cc)\otimes \mathtt C(\aa,\bb)\rightarrow\mathtt C(\aa,\cc)$,
  which is an arrow in $\mathcal M$, such that 
 \begin{itemize}
 \item For any $\aa,\bb,\cc,\dd$ in $\mathtt C$, we have
 $${\pmb\circ}_{\aa\bb\dd}({\pmb \circ}_{\bb\cc\dd}\otimes 1)={\pmb\circ}_{\aa\cc\dd}(1\otimes {\pmb\circ}_{\aa\bb\cc})\alpha:(\mathtt C(\cc,\dd)
 \otimes \mathtt C(\bb,\cc))\otimes \mathtt C(\aa,\bb)\rightarrow  \mathtt C(\aa,\dd),$$
 \item For any pair of objects $(\aa,\bb)$ in $\mathtt C$, we have
$${\pmb\circ}_{\aa\bb\bb}(Id_{\bb}\otimes 1)=\lambda:I\otimes \mathtt C(\aa,\bb)\rightarrow \mathtt C(\aa,\bb),$$
\item For any pair of objects $(\aa,\bb)$ in $\mathtt C$, we have
$${\pmb\circ}_{\aa\aa\bb}(1\otimes Id_{\aa})=\rho:\mathtt C(\aa,\bb)\otimes I\rightarrow \mathtt C(\aa,\bb).$$
 \end{itemize}
 
\end{itemize}
\begin{example}\rm
 Ordinary categories are categories enriched over $\mathrm{Set}$ the category of sets with cartesian product.
\end{example} 
\begin{example}\rm
Consider the strict monoidal category $\mathbf 2$ whose objects are $\{0,1\}$ ($1$ is the identity) with 
a single nonidentity arrow $0\rightarrow 1$ and such that the cartesian product is the ordinary product of integers.
 Each preordered sets $\mathcal P$ can be seen as a category enriched over $\mathbf 2$. 
 Notice that $\mathcal P(a,b)=1$ means that $a\leq b$ and the composition encodes the transitivity. Indeed, if $\mathcal P(a,b)=1$ and $\mathcal P(b,c)=1$ then necessarily we have
 $\mathcal P(a,c)=1$ because there is no arrow $1\rightarrow 0$ in $\mathbf 2$.
\end{example}
The notions of \emph{enriched functors} and \emph{enriched natural transformations} are defined as for categories.
More precisely, if $\mathtt C$ and $\mathtt D$ are two $\mathcal M$-categories, a $\mathcal M$-functor 
$\mathtt F:\mathtt C\rightarrow \mathtt D$  assigns to each object $\aa$ of $\mathtt C$ an object $\mathtt F(\aa)$ in $\mathtt D$ . 
It also associates with each arrow $\mathtt C(\aa,\bb)$ a morphism  $\mathtt F(\aa,\bb):{}
\mathtt C(\aa,\bb)\rightarrow \mathtt D(\mathtt F(\aa),\mathtt F(\bb))$ in $\mathcal M$ such that
$\mathtt F(\aa,\aa)Id_{\aa}=Id_{\mathtt F(\aa)}:I\rightarrow \mathtt D(\mathtt F(\aa),\mathtt F(\aa))$ and
$\mathtt F(\aa,\cc){\pmb \circ}={\pmb \circ}\mathtt F(\bb,\cc)\otimes \mathtt F(\aa,\bb):\mathtt C(\bb,\cc)\otimes \mathtt C(\aa,\bb)\otimes \mathtt D(\mathtt F(\aa),\mathtt F(\bb))$.

Let $\mathtt C$ and $\mathtt D$ be two enriched categories and $\mathtt F, \mathtt G:\mathtt C\rightarrow\mathtt D$ be two enriched functors. 
An \emph{enriched natural transformation}  $\pmb\eta:\mathtt F\rightarrow\mathtt G$ is a family of morphisms 
$\pmb\eta_{\aa}:I\rightarrow \mathtt D(\mathtt F(\aa),\mathtt G(\aa))$ of morphism of $\mathcal M$ defined for each
object $\aa$ in $\mathtt C$ such that for any two objects $\aa$ and $\bb$ of $\mathtt C$ we have
${{\pmb \circ}}_{\mathtt D}(\mathtt G(\aa,\bb)\otimes\pmb\eta_{\aa})\rho^{-1}=
{{\pmb \circ}}_{\mathtt D}(\pmb \eta_{\bb}\otimes \mathtt F(\aa,\bb))\lambda^{-1}:\mathtt C(\aa,\bb)\rightarrow 
\mathtt D(\mathtt F(\aa),\mathtt G(\bb))$.

Recall that a monoidal category  $\mathcal M$ is \emph{symmetric}
 if there exists a natural isomorphism $\mathrm{com}$ such that for any object $a,b\in\mathcal M$, $\mathrm{com}:a\otimes b
 \simeq b\otimes a$ satisfying
 \begin{itemize}
\item{\it The first hexagonal identity}.  $\alpha\ \mathrm{com}\ \alpha=(1\otimes \mathrm{com})\alpha(\mathrm{com}\otimes 1)$,
\item{\it Self inversibility.} $\mathrm{com}\ \mathrm{com}=1$.
 \end{itemize}
 Notice that these two identities imply the second hexagonal identity which reads 
 $\alpha^{{-1}}\ \mathrm{com}\ \alpha^{-1}=(\mathrm{com}\otimes 1)\alpha^{-1}(1\otimes\mathrm{com})$. 
Let $(\mathcal M,\otimes,\alpha,\lambda,\rho,I,\mathrm{com})$ be a symmetric monoidal category 
 and $\mathtt C$ be a $\mathcal M$-category.
 We consider the natural transformation $\phi$ defined by $\phi:=\alpha^{-1}(1\otimes\alpha)(1\otimes(\mathrm{com}\otimes 1))
 (1\otimes\alpha^{-1})\alpha$.
We denote $\mathtt C\times\mathtt D$ the structure consisting of
\begin{itemize}
\item a class $\obj(\mathtt C\times\mathtt D)$ of objects of $\mathtt C\times \mathtt D$ which are the pairs
 $[\aa,\bb]$ of objects,
\item the arrows are  objects $(\mathtt C\times\mathtt D)([\aa_1,  \bb_1],[\aa_2,\bb_2])=\mathtt C(\aa_1,\aa_2)\otimes{}
 \mathtt D(\bb_1,\bb_2)$ 
of $\mathcal M$  such that $\aa_1, \aa_2$   are objects in $\mathtt C$ (resp. $\bb_1, \bb_2$ are objects in $\mathtt D$)
and $\mathtt C(\aa_1,\aa_2)$ (resp.  $\mathtt D(\bb_1,\bb_2)$) are arrows in $\mathtt C$ (resp. $\mathtt D$),
\item For each $O_1=[\aa_1,\bb_1], O_2=[\aa_2,\bb_2], O_3=[\aa_3,\bb_3]$ objects in $\mathtt C\times \mathtt D$, we define the arrow 
${{\pmb \circ}}_{O_1, O_2, O_3}: (\mathtt C\times\mathtt D)[O_2,O_3]\otimes (\mathtt C\times\mathtt D)[O_1,O_2]\longrightarrow 
(\mathtt C\times\mathtt D)[O_1,O_3]$ in $\mathcal M$ 
designating a \emph{composition} and defined by ${{\pmb \circ}}_{O_1, O_2, O_3}=({{\pmb \circ}}_{\aa_1,\aa_2,\aa_3}\otimes{{\pmb \circ}}_{\bb_1,\bb_2,\bb_3})\phi$.
Notice that $\phi$ is the isomorphism $$(\mathtt C(\aa_2,\aa_3)\otimes \mathtt D(\bb_2,\bb_3))\otimes(\mathtt C(\aa_1,\aa_2)
\otimes \mathtt D(\bb_1,\bb_2))
\longrightarrow (\mathtt C(\aa_2,\aa_3)\otimes\mathtt C(\aa_1,\aa_2))\otimes (\mathtt D(\bb_2,\bb_3)\otimes\mathtt D(\bb_1,\bb_2)) .$$

\end{itemize}
\begin{proposition}
If $\mathtt C$ and $\mathtt D$ are $\mathcal M$-categories then
$\mathtt C\times\mathtt D$ is a $\mathcal M$-category
\end{proposition}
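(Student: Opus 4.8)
The plan is to supply the remaining datum---the enriched identities---and then to verify the three axioms (enriched associativity and the two unit laws) by reducing each of them to the corresponding axiom already known to hold in $\mathtt C$ and in $\mathtt D$, together with a purely structural coherence identity in $\mathcal M$. For the identity on an object $O=[\aa,\bb]$ I would take
\[
Id_{[\aa,\bb]}=(Id_{\aa}\otimes Id_{\bb})\lambda_{I}^{-1}:I\xrightarrow{\lambda_{I}^{-1}}I\otimes I\xrightarrow{Id_{\aa}\otimes Id_{\bb}}\mathtt C(\aa,\aa)\otimes\mathtt D(\bb,\bb),
\]
using that $\lambda_{I}=\rho_{I}:I\otimes I\to I$ is invertible. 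The essential observation throughout is that every morphism in sight is the tensor product of a morphism built from the structure of $\mathtt C$ with one built from the structure of $\mathtt D$, pre- or post-composed with an isomorphism assembled from $\alpha$, $\lambda$, $\rho$ and $\mathrm{com}$; so the two sides of each axiom differ only by such structural isomorphisms, whose equality is governed by coherence.

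For associativity I would write out both sides of
\[
{\pmb\circ}_{O_{1}O_{2}O_{4}}({\pmb\circ}_{O_{2}O_{3}O_{4}}\otimes 1)={\pmb\circ}_{O_{1}O_{3}O_{4}}(1\otimes{\pmb\circ}_{O_{1}O_{2}O_{3}})\alpha
\]
after substituting ${\pmb\circ}_{O_{i}O_{j}O_{k}}=({\pmb\circ}_{\aa_{i}\aa_{j}\aa_{k}}\otimes{\pmb\circ}_{\bb_{i}\bb_{j}\bb_{k}})\phi$. Using the interchange $(f\otimes g)(f'\otimes g')=(ff')\otimes(gg')$ together with naturality of $\phi$, $\alpha$ and $\mathrm{com}$, I would slide the $\mathtt C$-composition maps ${\pmb\circ}_{\aa_{\bullet}}$ and the $\mathtt D$-composition maps ${\pmb\circ}_{\bb_{\bullet}}$ toward the outside, so that both sides factor as $({\pmb\circ}_{\aa_{1}\aa_{3}\aa_{4}}(1\otimes{\pmb\circ}_{\aa_{1}\aa_{2}\aa_{3}})\otimes{\pmb\circ}_{\bb_{1}\bb_{3}\bb_{4}}(1\otimes{\pmb\circ}_{\bb_{1}\bb_{2}\bb_{3}}))$ precomposed with a shuffle isomorphism. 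The ${\pmb\circ}_{\aa_{\bullet}}$ and ${\pmb\circ}_{\bb_{\bullet}}$ parts then match by the enriched associativity axioms of $\mathtt C$ and $\mathtt D$ separately (each carrying its own $\alpha$), and what remains is to check that the two residual shuffle isomorphisms of $\mathcal M$---one arising from $\phi(\phi\otimes 1)$, the other from $\phi(1\otimes\phi)\alpha$---coincide. Abbreviating $\mathtt C(\aa_{i},\aa_{j})=C_{ij}$ and $\mathtt D(\bb_{i},\bb_{j})=D_{ij}$, both are isomorphisms $((C_{34}\otimes D_{34})\otimes(C_{23}\otimes D_{23}))\otimes(C_{12}\otimes D_{12})\to((C_{34}\otimes C_{23})\otimes C_{12})\otimes((D_{34}\otimes D_{23})\otimes D_{12})$ built solely from $\alpha$, $\lambda$, $\rho$ and $\mathrm{com}$, hence equal by Mac Lane's coherence theorem for symmetric monoidal categories \cite{MacLane}.

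For the two unit laws, ${\pmb\circ}_{O_{1}O_{2}O_{2}}(Id_{O_{2}}\otimes 1)=\lambda$ and ${\pmb\circ}_{O_{1}O_{1}O_{2}}(1\otimes Id_{O_{1}})=\rho$, I would substitute the definitions of ${\pmb\circ}$ and of $Id$, move the factors $Id_{\aa}$ and $Id_{\bb}$ through $\phi$ by naturality, and invoke the unit laws of $\mathtt C$ and $\mathtt D$; a residual comparison of structural isomorphisms built from $\lambda$, $\rho$, $\alpha$ and $\mathrm{com}$ is again closed by coherence. The main obstacle is bookkeeping rather than conceptual: one must ensure that the shuffle $\phi$ interacts correctly with $\alpha$ in the region controlling associativity. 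This is precisely the kind of diagram that coherence renders automatic, so once the computation is organised to isolate the $\mathtt C$- and $\mathtt D$-parts from the structural part, the verification is routine.
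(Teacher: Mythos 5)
Your proposal is correct and follows essentially the same route as the paper: substitute ${\pmb \circ}_{O_iO_jO_k}=({\pmb \circ}_{\aa_i\aa_j\aa_k}\otimes{\pmb \circ}_{\bb_i\bb_j\bb_k})\phi$, slide the componentwise compositions outward by interchange and naturality, invoke the enriched axioms of $\mathtt C$ and $\mathtt D$, and close with the residual structural identity $(\alpha\otimes\alpha)\phi(\phi\otimes 1)=\phi(1\otimes\phi)\alpha$ --- which the paper merely asserts (``Noting that\dots'') and you justify by symmetric monoidal coherence, a valid appeal here since both shuffles induce the same permutation of the six tensor factors. Your explicit $Id_{[\aa,\bb]}=(Id_{\aa}\otimes Id_{\bb})\lambda_{I}^{-1}$ is in fact a more careful rendering of the paper's implicit $Id_{O}=Id_{\aa}\otimes Id_{\bb}$, whose missing $I\simeq I\otimes I$ factor is what produces the unit isomorphisms floating in the paper's equations (\ref{CxCenri2}) and (\ref{CxCenri3}).
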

\begin{proof}
Let $O_i=[\aa_i,\bb_i]$, $i=1\dots 4$, be four objets of $\mathtt C\times \mathtt D$.
Consider the map
\[
{{\pmb \circ}}_{O_1,O_2,O_4}({{\pmb \circ}}_{O_2,O_3,O_4}\otimes 1):\left((\mathtt C\times\mathtt D)(O_3,O_4)\otimes 
(\mathtt C\times\mathtt D)(O_2,O_3)\right)\otimes (\mathtt C\times\mathtt D)(O_1,O_2)\longrightarrow
(\mathtt C\times\mathtt D)(O_1,O_4). 
\]
We have
\[\begin{array}{rcl}
{{\pmb \circ}}_{O_1,O_2,O_4}({{\pmb \circ}}_{O_2,O_3,O_4}\otimes 1)&=&
({{\pmb \circ}}_{\aa_1\aa_2\aa_4}\otimes{{\pmb \circ}}_{\bb_1\bb_2\bb_4})\phi\left(
({{\pmb \circ}}_{\aa_2\aa_3\aa_4}\otimes{{\pmb \circ}}_{\bb_2\bb_3\bb_4	})\phi\otimes1\right)\\&=&
\left(\left({{\pmb \circ}}_{\aa_1\aa_2\aa_4}({{\pmb \circ}}_{\aa_2\aa_3\aa_4}\otimes 1)\right)\otimes\left(
{{\pmb \circ}}_{\bb_1\bb_2\bb_4}({{\pmb \circ}}_{\bb_2\bb_3\bb_4}\otimes 1)\right)\right)\phi(\phi\otimes 1).
\end{array}
\]
But, since $\mathcal M$ is monoidal one has
\[
{{\pmb \circ}}_{\aa_1\aa_2\aa_4}({{\pmb \circ}}_{\aa_2\aa_3\aa_4}\otimes 1)={{\pmb \circ}}_{\aa_1\aa_3\aa_4}(1\otimes {{\pmb \circ}}_{\aa_1\aa_2\aa_3})
\alpha \mbox{ and }
{{\pmb \circ}}_{\bb_1\bb_2\bb_4}({{\pmb \circ}}_{\bb_2\bb_3\bb_4}\otimes 1)={{\pmb \circ}}_{\bb_1\bb_3\bb_4}(1\otimes {{\pmb \circ}}_{b_1b_2b_3})\alpha.
\]
Noting that 
\[
(\alpha\otimes\alpha)\phi(\phi\otimes 1)=\phi(1\otimes\phi)\alpha,
\]
from
\[
(({{\pmb \circ}}_{\aa_1,\aa_3,\aa_4}(1\otimes {{\pmb \circ}}_{\aa_1,\aa_2,\aa_3}))\otimes ({{\pmb \circ}}_{\bb_1,\bb_3,\bb_4}(1\otimes {{\pmb \circ}}_{\bb_1,\bb_2,\bb_3}))\phi=
({{\pmb \circ}}_{\aa_1,\aa_3,\aa_4}\otimes {{\pmb \circ}}_{\bb_1\bb_3\bb_4})\phi(1\otimes({{\pmb \circ}}_{\aa_1\aa_2\aa_3}\otimes{{\pmb \circ}}_{\bb_1\bb_2\bb_3})),
\]
we deduce
\begin{equation}\label{CxCenri1}{{\pmb \circ}}_{O_1O_2O_4}({{\pmb \circ}}_{O_2O_3O_4}\otimes 1)={{\pmb \circ}}_{O_1O_3O_4}(1\otimes {{\pmb \circ}}_{O_1O_2O_3})\alpha.\end{equation}
Furthermore, since $Id_{O_i}=Id_{\aa_i}\otimes Id_{\bb_i}$, we have
\begin{equation}\label{CxCenri2}
\begin{array}{rcl}
{{\pmb \circ}}_{O_1O_2O_2}(Id_{O_2}\otimes 1)&=&({{\pmb \circ}}_{\aa_1\aa_2\aa_2}\otimes{{\pmb \circ}}_{\bb_1\bb_2\bb_2})\left((Id_{\aa_2}\otimes 1)
\otimes(Id_{\bb_2}\otimes 1)
\right)\phi\alpha^{-1}\lambda^{-1}\\
&=&(\lambda\otimes\lambda)\phi\alpha^{-1}\lambda^{-1}=\lambda
\end{array}
\end{equation}
and
\begin{equation}\label{CxCenri3}
\begin{array}{rcl}
{{\pmb \circ}}_{O_1O_1O_2}(1\otimes Id_{O_1})&=&({{\pmb \circ}}_{\aa_1\aa_1\aa_2}\otimes{{\pmb \circ}}_{\bb_1\bb_1\bb_2})\left((1\otimes Id_{\aa_1})
\otimes(1\otimes
Id_{\bb_1})\right)\phi\rho^{-1}\lambda^{-1}\\
&=&(\rho\otimes\rho)\phi\alpha^{-1}\rho^{-1}=\rho.
\end{array}
\end{equation}
Equations (\ref{CxCenri1}), (\ref{CxCenri2}) and (\ref{CxCenri3}) show that $\mathtt C\times\mathtt D$ is an enriched category.
\end{proof}
An \emph{enriched $\mathcal M$-monoidal category} $\mathtt M$ is a $\mathcal M$-category 
equiped with a bifunctor $\pmb \otimes:\mathtt M\times\mathtt M\rightarrow \mathtt M$, an object I called the unit object
and three natural $\mathcal M$-isomorphisms
\begin{enumerate}
\item the associator $\pmb \alpha$ with components $\pmb\alpha_{\aa,\bb,\cc}: (\aa\pmb \otimes\bb)\pmb \otimes\cc\simeq \aa\pmb
\otimes(\bb\pmb \otimes \cc)$,
\item the left unitor $\pmb\lambda$ with components $\pmb\lambda_{\aa}:I\pmb \otimes\aa\simeq \aa$,
\item the right unitor $\pmb\rho$ with components $\pmb\rho_{\aa}:\aa\pmb \otimes I\simeq \aa$
\end{enumerate}
satisfying 
$(1\pmb \otimes\pmb\alpha)\pmb\alpha(\pmb{\alpha}\pmb \otimes 1)=\pmb\alpha\ \pmb\alpha:((\aa\pmb \otimes\bb)\pmb \otimes\cc)\pmb \otimes\dd\rightarrow
 \aa\pmb \otimes(\bb\pmb \otimes (\cc\pmb \otimes \dd))$ and 
$(1\pmb \otimes\pmb\lambda)\pmb\alpha=\pmb\rho\pmb\otimes 1:(\aa\pmb \otimes I)\pmb \otimes \bb\rightarrow \aa\pmb \otimes\bb$ 
for any objects
 $\aa, \bb, \cc, \dd$ in $\mathtt M$.
A enriched $\mathcal M$-monoidal category is said \emph{strict} if the natural transformations $\pmb \alpha, \pmb \lambda, \pmb \rho$ are identities.
Hence, we can now define the notion of  \emph{enriched PRO} as a strict $\mathcal M$-monoidal category whose objects are the natural numbers and the
tensor product sends $(m,n)$ to $m+n$. 

Let $\mathbb K$ be a commutative semiring. The class of $\mathbb K$-modules forms a category which is  equipped with a tensor product
conferring to it a structure of symmetric monoidal category. For any 
$\mathbb K$-module $M$,
the identity object is $\mathbb K$. The morphism $\lambda_{M}$ (resp. $\rho_{M}$) sends $(a,m)$ (resp. $(m,a)$) to $a\cdot m$.
If $M$, $N$, and $P$ are three $\mathbb K$-modules, the morphism $\alpha_{M,N,P}$ sends $((m,n),p)$ to $(m,(n,p))$.
\begin{definition}
	A $\mathbb K$-ModPro is a PRO enriched in the category of the $\mathbb K$-modules.
\end{definition}
Notice that the compositions  and the tensor product  are morphisms of 
$\mathbb K$-modules.\\
This definition is equivalent to the definition given in Section 
\ref{ModPro}.
\begin{itemize}
	\item The operation $\leftrightarrow$ is assimilated to the bifunctor $\pmb \otimes$. Hence, it sends 
	$(\mathtt p,\mathtt q+\mathtt  r)\in Hom(m,n)\otimes Hom(m',n')$ to 
	$\mathtt p\leftrightarrow (\mathtt q+\mathtt  r)  \in Hom(m+m',n+n')$ and $(\mathtt p,\mathtt q)+ (\mathtt p,\mathtt r)$ to 
	$(\mathtt p\leftrightarrow \mathtt q)+ (\mathtt p\leftrightarrow \mathtt r)$. 
	Since $(\mathtt p,\mathtt q+\mathtt  r)=(\mathtt p,\mathtt q)+(\mathtt p,\mathtt r)$, we deduce  that 
	$\mathtt p\leftrightarrow (\mathtt q+\mathtt r)=
	(\mathtt p\leftrightarrow\mathtt q)+(\mathtt p\leftrightarrow\mathtt r)$.
	In the same way, the right distributivity is deduced from the fact that $\leftrightarrow=\pmb\otimes$.{}
	\item The operation $\updownarrow$ is assimilated to the composition $\pmb \circ$. Hence, it sends 
	$(\mathtt p,\mathtt q+\mathtt  r)\in Hom(m,n)\otimes Hom(n,p)$ to 
	$\begin{array}{c}\mathtt p\\\updownarrow\\ (\mathtt q+\mathtt  r)\end{array}  \in Hom(m,p)$
	 and $(\mathtt p,\mathtt q)+(\mathtt p,\mathtt r)$ to
	 $\begin{array}{c}\mathtt p\\\updownarrow\\\mathtt  q\end{array}+ \begin{array}{c}\mathtt p\\\updownarrow\\\mathtt 
	  r\end{array}$. 
	Since $(\mathtt p,\mathtt q+\mathtt r)=(\mathtt p,\mathtt q)+ (\mathtt p,\mathtt r)$, we deduce  that 
	$
\begin{array}{c}
(\mathtt p_1+\mathtt p_2)\\
\updownarrow\\
\mathtt q
\end{array}=\left(\begin{array}{c}
\mathtt p_1\\
\updownarrow\\
\mathtt q
\end{array}\right)+
\left(\begin{array}{c}
\mathtt p_2\\
\updownarrow\\
\mathtt q
\end{array}\right),\,
\begin{array}{c}
\mathtt q\\
\updownarrow\\
(\mathtt p_1+\mathtt p_2)
\end{array}$.
	In the same way, the down distributivity is deduced from the fact that 
	$\updownarrow=\pmb\circ$.
\end{itemize}

The following table summarize the correspondance between the categorial definition and the constructive definition.
\[
\begin{array}{|c|c|}
\hline
\mbox{Constructive definition}&\mbox{Categorial definition}\\
\hline
\mathtt M_{m,n}&\mathtt M(m,n)\\
\updownarrow&{{\pmb \circ}}\\
\leftrightarrow&\pmb \otimes\\
\hline
\end{array}
\]
\section{Modeling quantum gates\label{QI}}
Representation of free PRos can be used when we draw some multilinear operations as boxes. 
For instance, the composition of quantum gates, in quantum computation theory,
 can be represented by hypermatrices in $ \mathbb C(2)$. Quantum networks are devices consisting of quantum logic gates whose 
 computational steps are synchronized in times (see eg \cite{Ekert}). Graphically a quantum networks is drawn by relying quantum gates. Quantum gates 
 are identified with hypermatrices of $\mathbb C(2)_{n,n}$ acting on qubit systems identified with hypermatrices of 
 $\mathbb C(2)_{0,m}$. In the aim to match with the notation of quantum computation theory, we consider that the indices of the hypermatrices
 are taken in $\{0,1\}$ instead of $\{1,2\}$.
 The most common gate is the Hadamard gate $\boxed{H}$ acting on a single qubit. This gate is identified with the matrix
  $\mathcal H\in 
 \mathbb C(2)_{1,1}$ defined by $\displaystyle \mathop {\mathcal H}^{0}_{0}=\mathop {\mathcal H}^{0}_{1}=
 \mathop {\mathcal H}^{1}_{0}=-\mathop {\mathcal H}^{1}_{1}=\frac1{\sqrt2}$.
 Another well known gate 
 is denoted by $\boxed{c-V}$ 
  acting on two qubit systems as the hypermatrix $\mathcal V$ 
 in $\mathbb C(2)_{2,2}$ defined by 
 $\displaystyle \mathop {\mathcal V}^{ab}_{ab}=1$ 
 for any $(a,b)\neq (1,1)$,
  $\displaystyle \mathop {\mathcal V}^{11}_{11}=I$, 
   and 
  $\displaystyle \mathop {\mathcal V}^{ab}_{cd}=0$ 
  for $(a,b)\neq (c,d)$. Some important gates can be obtained by sticking these elementary gates in quantum networks.
 
  Let us give a few examples. The controlled-not  gate $\boxed{C}$ is one of the
   most popular two-qubit gates because it allows to create entanglement from
  non entangled systems. Graphically, the network is drawn in Figure 
  \ref{Quantum Network}.
  \begin{figure}[h]\[{}
  \begin{tikzpicture}
\Apro 03{0.5}{0.5}11H{white}{H}
\Apro 02{1}{0.5}22{$c-V$}{white}{V}
\Apro 0{1}{1}{0.5}22{$c-V$}{white}{VV}
\Apro 00{0.5}{0.5}11H{white}{HH}
\draw[fill=black] (0.25,4) circle [radius=0.03];
\draw[fill=black] (0.75,4) circle [radius=0.03];
\draw[fill=white] (0.25,-1) circle [radius=0.03];
\draw[fill=white] (0.75,-1) circle [radius=0.03];

\draw (uuH1)--(0.25,4);
\draw (uuV2)--(0.75,4);
\draw (ddH1)--(uuV1);
\draw (uuVV2)--(ddV2);
\draw (uuVV1)--(ddV1);
\draw (ddHH1)--(0.25,-1);
\draw (ddVV2)--(0.75,-1);
\draw (uuHH1)--(ddVV1);
\end{tikzpicture}
  \]
  \caption{An example of Quantum Network.\label{Quantum Network} }
  \end{figure}
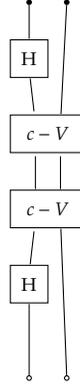
  Indeed, all works as if we consider the free Pro generated by $\boxed{H}$ and $\boxed{c-V}$ together with the representation $\mu${}
  sending $\boxed{H}$ to $\mathcal H$ and $\boxed{c-V}$ to $\mathcal V$. The hypermatrix $\mathcal C=\mu\left(\boxed{C}\right)$ can
   be computed by
  $\displaystyle\mathop {\mathcal C}^{a,b}_{c,d}=\sum_{e_1,e_{2},e_{3},e_{4}}
  \displaystyle\mathop {\mathcal H}^{a}_{e_{1}}\mathop{\mathcal V}^{e_{1},b}_{e_{2},e_{3}}\mathop{\mathcal V}^{e_{2},e_{3}}_{e_{4}d}
  \mathop{\mathcal H}^{e_{4}}_{c}$. A fast calculation gives $\displaystyle\mathop {\mathcal C}^{a,b}_{c,d}=\delta_{b,d}\delta_{c,a+b\mod 2}$.
In this context a (pure) $k$-qubit system $|\phi\rangle=\sum_{i_{1},\dots,i_{k}}\alpha_{i_{1},\dots,i_{k}}|i_{1}\cdots i_{k}\rangle$
is seen as the hypermatrix $\phi\in\mathbb C(2)_{0,k}$
 satisfying $\displaystyle\mathop\phi_{i_{1},\dots,i_{k}}=\alpha_{i_{1},\dots,i_{k}}$.
The action of a quantum network $\boxed N$ on a system $|\phi\rangle$ is just the branching $\begin{array}{c}\phi\\\updownarrow\\\mathcal N\end{array}$, where $\mathcal N$ is the hypermatrix associated to
$N$. For instance, let $|\phi\rangle=|01\rangle+|11\rangle$. The matrix $\begin{array}{c}\phi\\\updownarrow\\\mathcal C\end{array}$ is associated to the quantum system $|\phi'\rangle=|00\rangle+|11\rangle$.{}
Notice that $|\phi'\rangle$ is entangled but not $|\phi\rangle.$
\section{On Temperley-Lieb algebras \label{TL}}

\tikzset{
  pt/.style={insert path={node[scale=1]{.}}},
  dnup/.style={insert path={ [pt] .. controls +(0,1) and +(0,-1) .. +(#1,2) [pt]}},
  dndn/.style={insert path={ [pt] .. controls +(0,0.5) and +(0,0.5) .. +(#1,0) [pt]}},
  upup/.style={insert path={ [pt] .. controls +(0,-0.5) and +(0,-0.5) .. +(#1,0) [pt]}},
}

\def\upup{
\begin{tikzpicture}
	\draw(0,0)[upup=0.5];
\end{tikzpicture}
}

\def\dndn{
\begin{tikzpicture}
	\draw(0,-0.5)[dndn=0.5];
\end{tikzpicture}
}
We consider the graded set $\mathcal X=\mathcal X_{2,0}\cup\mathcal X_{0,2}$ with
$
\mathcal X_{2,0}=\left\{\upup\right\}
$
and
$
\mathcal X_{0,2}=\left\{\dndn\right\}
$.

We define the diagram ModPro $\mathcal D=\mathbb C\langle\mathcal X\rangle/_{\equiv_{D}}$ where $\equiv_{D}$ is the congruence
generated by 
\begin{equation}\label{antinoeud}
\begin{array}{c} \dndn\leftrightarrow |\\
\updownarrow\\
|\leftrightarrow\upup\end{array}\equiv_{D}
\begin{array}{c} |\leftrightarrow \dndn\\
\updownarrow\\
\upup\leftrightarrow|\end{array}\equiv_{D}|.
\end{equation}
Let
\begin{equation}
\mathfrak U_{i}^{(n)}=\overbrace{|\leftrightarrow\cdots\leftrightarrow|}^{i-1\times}\leftrightarrow\begin{array}{c}\upup\\\updownarrow\\\dndn\end{array}
\leftrightarrow \overbrace{|\leftrightarrow\cdots\leftrightarrow|}^{n-i-1\times}\in \mathcal D_{n,n},
\end{equation}
for $1\leq i<n$.
These elements satisfy the commutations $\mathfrak U_{i}^{(n)}\mathfrak U_{j}^{(n)}=\mathfrak U_{j}^{(n)}\mathfrak U_{i}^{(n)}$ 
for $|i-j|>1$ together with reduction of braids
\begin{equation}\begin{array}{c}\mathfrak U^{(n)}_{i}\\\updownarrow\\\mathfrak U^{(n)}_{i+1}\\\updownarrow\\\mathfrak U^{(n)}_{i}\end{array}=\mathfrak U^{(n)}_{i}\end{equation}
and
\begin{equation}
\begin{array}{c}\mathfrak U^{(n)}_{i+1}\\\updownarrow\\\mathfrak U^{(n)}_{i}\\\updownarrow\\\mathfrak U^{(n)}_{i+1}\end{array}=\mathfrak U^{(n)}_{i+1}.
\end{equation}
We denote by $\mathcal T=\mathcal D/_{\equiv_{T}}$, the quotient of $\mathcal D$ by the congruence generated by 
\begin{equation}\label{drel}\begin{array}{c}\dndn\\\updownarrow\\\upup\end{array}\equiv_T d
\end{equation} for some $d\in\mathbb C$. If we denote  $\hat{\mathfrak U}_{i}^{(n-1)}=\phi_{\equiv_{T}}\left(\mathfrak U_{i}^{(n-1)}\right)$,{}
then one has an additional relation 
\begin{equation}\label{quadratique}
\begin{array}{c}\hat{\mathfrak 
U}_{i}^{(n-1)}\\\updownarrow\\\hat{\mathfrak U}_{i}^{(n-1)}\end{array}=d\hat{\mathfrak U}_{i}^{(n-1)}.
\end{equation}
Notice that $(T_{n,n},\updownarrow,+)$ is an algebra and the subsalgebra $\mathcal T\ell_{n}$ generated by the elements 
$\hat{\mathfrak U}_{i}^{(n-1)}$ is isomorphic to the Temperley-Lieb algebra (see e.g. \cite{TempLieb}). As a consequence, any multilinear representation
of $\mathcal T$ of dimension $N$ gives a representation of $\mathcal T\ell_{n}$ of dimension $N^{n}$.\\
Hence to find a representation of the Temperley-Lieb algebra, it suffices to exhibit a representation $\mu$ of $\mathcal F(\mathcal X)$
such that 
\begin{equation}
\mu\left(\begin{array}{c}\dndn\\\updownarrow\\\upup\end{array}\right)=d\mbox{ and }
\mu\left(\begin{array}{c} \dndn\leftrightarrow |\\
\updownarrow\\
|\leftrightarrow\upup\end{array}\right)=\mu\left(\begin{array}{c} |\leftrightarrow \dndn\\
\updownarrow\\
\upup\leftrightarrow|\end{array}\right)=I_{N^{n}\times N^{n}}.
\end{equation}
For instance, one can deduce a representation of dimension $2^{n}$ of 
each 
Temperley-Lieb algebra $\mathcal T\ell_{n}$ from
 the dimension $2$ representation $\mu$ of $\mathcal F(\mathcal X)$ defined by
\[{}
\mu\left(\dndn\right)=\left(\begin{array}{cccc}\ ^{11}&\ ^{12}&\ ^{21}&\ ^{22}\\
2-d&0&d-2&1\end{array}
\right)
\]
and
\[{}
\mu\left(\upup\right)=\begin{array}{c}\ ^{11}\\\ ^{12}\\\ ^{21}\\\ ^{22} \end{array}\left(\begin{array}{c}
\frac1{2-d}\\0\\1\\1
\end{array}
\right).
\]
\def\cycle{\mathrm{Cycle}}
\def\even{\mathrm{ntriv}}
\def\odd{\mathrm{triv}}
Let $\mathfrak d$ be a diagram in $\mathcal D_{n,n}$, we  denote by $\cycle(\mathfrak d)$ the design obtained from $\mathfrak d$ by relying the $i$th input to
the $i$th output for each $1\leq i\leq n$. 
We remark that $\cycle(\mathfrak d)$ is the juxtaposition of connected components. Some components contain at least two generators of $\mathcal X$, we will say that such a component is \emph{non trivial}. The \emph{trivial} components come from a single $|$ in $\mathfrak d$.
We denote by 
$\even(\mathfrak d)$ (resp. $\odd(\mathfrak d)$) the number of non trivial (resp. odd) connected components in $\cycle(\mathfrak d)$ 
number of occurrences of $|$.\\
For instance,
\[{}
  \mbox{ for }\mathfrak d=
  \left.\begin{array}{c}\\ \\ \\ \\ \\ \\ \end{array}\right.{}
\begin{array}{c} \\ 
\begin{tikzpicture}
    \draw (1,-1) [dnup=1];
    \draw (1,1) [upup=0.5];
    \draw (1.5,-1) [dnup=1];
    \draw (2,-1) [dndn=0.5];
    \draw (3,-1) [dnup=0];
    \draw (3.5,-1) [dnup=0];
  \end{tikzpicture}\end{array}\left.\begin{array}{c}\\ \\ \\ \\ \\ 
  \\\end{array} \right. \mbox{, one has }
\cycle(\mathfrak d)
=\begin{array}{c} \\ 
\begin{tikzpicture}
    \draw (1,0) [dnup=1];
    \draw (1,2) [upup=0.5];
    \draw (1.5,0) [dnup=1];
    \draw (2,0) [dndn=0.5];
    \draw (3,0) [dnup=0];
    \draw (3.5,0) [dnup=0];
    \draw[dashed] (1,0)  to [bend left](1,2) ;
    \draw[dashed] (1.5,0)  to [bend left](1.5,2) ;
    \draw[dashed] (2,0)  to [bend left](2,2) ;
    \draw[dashed] (2.5,0)  to [bend left](2.5,2) ;
    \draw[dashed] (3,0)  to [bend left](3,2) ;
    \draw[dashed] (3.5,0)  to [bend left](3.5,2) ;
  \end{tikzpicture}\end{array},
\]
\def\tr{\mathrm{tr}}
$\even(\mathfrak d)=1$, and $\odd(\mathfrak d)=2$.
Let $\tr$ denotes the trace  computed by considering the hypermatrices as $N^{n}\times N^{n}$-matrices.
Remark that $\tr(\mu(\mathfrak d\leftrightarrow \mathfrak d'))=
	\tr(\mu(\mathfrak d)\leftrightarrow\mu(\mathfrak d'))=\tr(\mu(\mathfrak d))\tr(\mu(\mathfrak d'))$ (
	because the trace of a Kronecker product is the product of the trace).
	Hence, we deduce $\tr\left(\mu\left(\overbrace{|\dots|}^{n\times}\right)\right)=N^{n}$. We also have
	\[{}\begin{array}{rcl}
	\tr(\mu(U_{i}))&=&\tr\left(\mu\left(\begin{array}{c}|\leftrightarrow\cdots\leftrightarrow|\leftrightarrow \upup\leftrightarrow|{}
	\leftrightarrow\cdots\leftrightarrow|\\\updownarrow\\ |\leftrightarrow\cdots\leftrightarrow|\leftrightarrow \dndn\leftrightarrow|{}
	\leftrightarrow\cdots\leftrightarrow|\end{array}\right)\right)
		=\tr\left(\mu\left(\begin{array}{c}\upup\\\updownarrow\\  \dndn\end{array}\right)\right){}
		\tr\left(\mu\left(\overbrace{|\cdots|}^{n-2\times}\right)\right)\\
		&=&\tr\left(\mu\left(\begin{array}{c}\dndn\\\updownarrow\\  \upup\end{array}\right)\right)N^{n-2}=N^{n-2}d.
	\end{array}
	\]
	In a similar way, one shows that for $|j-i|>1$ we have
	\[{}
	\tr\left(\mu\left(\begin{array}{c}\mathfrak U_{i}\\\updownarrow\\\mathfrak  U_{j}\end{array}\right)\right)=N^{n-4}d^{2}.
	\]
	Furthermore, for any $i<n$ one has
	\[{}
	\tr(\mu(\mathfrak  U_{i}))=\tr\left(\mu\left(\begin{array}{c}\mathfrak  U_{i}\\\updownarrow\\\mathfrak  U_{i+1}\\\updownarrow\\
	\mathfrak  U_{i}\end{array}
	\right)\right)={}
	\tr\left(\mu\left(\begin{array}{c}\mathfrak  U_{i}\\\updownarrow\\\mathfrak  U_{i}\\\updownarrow\\\mathfrak  U_{i+1}\end{array}
	\right)\right)=d\tr\left(\mu\left(\begin{array}{c}\mathfrak U_{i}\\\updownarrow\\\mathfrak  U_{i+1}\end{array}\right)\right).
	\]
	And then
	\[{}
	\tr\left(\mu\left(\begin{array}{c}\mathfrak U_{i}\\\updownarrow\\\mathfrak  U_{i+1}\end{array}\right)\right)={}
	\tr\left(\mu\left(\begin{array}{c}\mathfrak U_{i+1}\\\updownarrow\\\mathfrak U_{i}\end{array}\right)\right)=\frac1d \tr(\mu(\mathfrak  U_{i}))=N^{n-2}.
	\]
	More generally, one conjectures
\begin{conj}
Let $\mu$ be a representation of  $\mathcal T$ of dimension $N$. Then
\[{}
\tr(\mu(\mathfrak d))=N^{\odd(\mathfrak d)}d^{\even(\mathfrak d)},
\]
for any $\mathfrak d\in\mathcal \mathcal T\ell_{n}$.
\end{conj}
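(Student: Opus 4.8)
The plan is to read $\tr(\mu(\mathfrak d))$ as a complete contraction over the closed diagram $\cycle(\mathfrak d)$, to factor that contraction over the connected components (loops) of $\cycle(\mathfrak d)$, and then to show that each trivial component contributes a factor $N$ while each nontrivial component contributes a factor $d$. First I would write $\tr(\mu(\mathfrak d))=\sum_{I\in[N]^{n}}\hmat{\mu(\mathfrak d)}II$ and invoke Proposition \ref{rep2path}, by which each diagonal entry is the sum of $\tilde\mu(\mathfrak q)$ over the paths $\mathfrak q$ with $u(\mathfrak q)=\mathfrak d$ and $\mathtt{Out}(\mathfrak q)=\mathtt{In}(\mathfrak q)=I$. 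Summing over $I$ then amounts to summing $\tilde\mu$ over all labelings of the strands of $\mathfrak d$ in which the $i$th output and the $i$th input carry the same label for every $i$, that is, over exactly the labelings of $\cycle(\mathfrak d)$. Since $\tilde\mu(\mathfrak q)$ is, by definition, the product over the cap and cup generators occurring in $\mathfrak d$ of the corresponding entries of $\mu(\upup)$ and $\mu(\dndn)$, and since each such entry depends only on the labels of the legs of its own generator, the sum factorizes as a product over the connected components of $\cycle(\mathfrak d)$: labels living on distinct loops are independent. This reduces the claim to the bookkeeping identity $\tr(\mu(\mathfrak d))=\prod_{\ell}W(\ell)$, where $W(\ell)$ is the scalar obtained by summing the product of the generator-entries lying on $\ell$ over all labelings of $\ell$.

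Next I would evaluate the two kinds of loop. A trivial component is a single wire $|$ closed upon itself; it carries one free label, summed over $[N]$ with weight $1$, so $W(\ell)=N$. For a nontrivial component I would use the defining relations of $\mathcal T$. The snake relation \ref{antinoeud} gives, under $\mu$, that an adjacent cap--cup pair sharing a strand contracts to a through-strand, i.e. to the identity $\mathbf I(N)$, a Kronecker delta on the two remaining legs; applying it locally removes one cap and one cup from $\ell$ without changing the number of loops. By induction on the number of generators lying on $\ell$, the curve therefore straightens to the single circle of relation \ref{drel}, whose value is $d$, so $W(\ell)=d$ for every nontrivial component. Multiplying the contributions, and recalling that $\odd(\mathfrak d)$ counts the trivial components and $\even(\mathfrak d)$ the nontrivial ones, yields $\tr(\mu(\mathfrak d))=N^{\odd(\mathfrak d)}d^{\even(\mathfrak d)}$.

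The main obstacle is precisely this last reduction: one must guarantee that the snake relation is always applicable, namely that any connected loop carrying more than two generators contains an adjacent cap--cup pair forming a reducible ``S''. Here planarity --- the non-crossing character of the underlying Temperley--Lieb diagram --- is essential: an innermost cap is flanked along $\ell$ by cup-legs to which the move applies, so the induction descends to a single circle and terminates. I would also record that these successive contractions are legitimate inside $\mu$ because $\mu$ factors through $\mathcal T$ and hence respects both \ref{antinoeud} and \ref{drel}; thus no information beyond the two generator relations is ever used, and the whole argument is a transcription of the planar isotopy calculus into the language of the multilinear representation.
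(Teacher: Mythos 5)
First, a point of reference: the paper does not prove this statement at all --- it is explicitly left as a conjecture, supported only by the sample computations $\tr(\mu(Id_{n}))=N^{n}$, $\tr(\mu(\mathfrak U_{i}))=N^{n-2}d$, $\tr(\mu(\mathfrak U_{i}\updownarrow\mathfrak U_{j}))=N^{n-4}d^{2}$ for $|i-j|>1$, and $\tr(\mu(\mathfrak U_{i}\updownarrow\mathfrak U_{i+1}))=N^{n-2}$. So there is nothing to compare your proof with; the question is whether it stands on its own, and it does not. Your first steps are sound: expanding the trace via Proposition \ref{rep2path} as a sum over labelings whose $i$th input and output labels agree, factoring that sum over the connected components of $\cycle(\mathfrak d)$, and the value $N$ for a trivial component are all correct. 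The fatal step is the claim that every nontrivial component ``straightens to the single circle of relation (\ref{drel})'' and hence contributes $d$. A component of $\cycle(\mathfrak d)$ lives in the annulus created by the trace closure, and it may be non-contractible there; snake moves preserve this winding, so such a component straightens not to a circle but to the closure of a bare strand, whose weight is $N$. In matrix terms, let $U$ and $D$ be the $N\times N$ matrices obtained by reshaping $\mu(\upup)$ and $\mu(\dndn)$. The two snake relations (\ref{antinoeud}) say exactly $UD=DU=I_{N}$, the circle relation (\ref{drel}) says $\sum_{a,b}U_{ab}D_{ab}=d$, and these are the only constraints on a representation of $\mathcal T$; nothing forces the two quantities to coincide. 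A contractible loop evaluates to $\sum_{a,b}U_{ab}D_{ab}=d$, but a winding loop evaluates to $\tr(UD)=\tr(I_{N})=N$.

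Concretely, take $\mathfrak d=\mathfrak U^{(n)}_{i}\updownarrow\mathfrak U^{(n)}_{i+1}$, whose reduced diagram has a cap on the outputs $i,i+1$, a cup on the inputs $i+1,i+2$, a through-strand from output $i+2$ to input $i$, and $n-3$ wires. Then $\cycle(\mathfrak d)$ has exactly one nontrivial component and $n-3$ trivial ones, so your argument (and the conjecture) would give $\tr(\mu(\mathfrak d))=N^{n-3}d$. But the weight of that nontrivial loop is $\sum_{a,b}U_{ab}D_{ba}=\tr(UD)=N$, the cap and cup being glued antiparallel because the loop passes through the closure; hence $\tr(\mu(\mathfrak d))=N^{n-2}$, which is exactly what the paper derives independently from the braid reduction $\mathfrak U_{i}\updownarrow\mathfrak U_{i+1}\updownarrow\mathfrak U_{i}=\mathfrak U_{i}$, relation (\ref{quadratique}), and cyclicity of the trace. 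So your proof breaks at the loop-evaluation step, and it cannot be repaired as stated: the conjecture itself, with $\even$ and $\odd$ as defined in the paper, is inconsistent with the paper's own computation unless $d=N$ (for instance it fails for the explicit $2$-dimensional representation given in this appendix whenever $d\neq 2$). What your strategy does establish, once the terminal case of the induction is corrected, is the amended formula $\tr(\mu(\mathfrak d))=N^{w(\mathfrak d)}d^{c(\mathfrak d)}$, where $c(\mathfrak d)$ counts the contractible components of $\cycle(\mathfrak d)$ and $w(\mathfrak d)$ the winding ones; trivial components are winding, but not conversely, and this gap between ``trivial'' and ``winding'' is precisely what your planarity argument overlooked.
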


\end{document}